\newtheorem{Thm}{Théorème}
\newtheorem{Pro}{Proposition}[section]
\newtheorem{Lem}{Lemme}[section]
\theoremstyle{definition}
\theoremstyle{remark}
\newtheorem*{Rem}{Remarque}
\newcommand{\Cc}{\mathbb{C}}
\newcommand{\Zz}{\mathbb{Z}}
\newcommand{\Pp}{\mathbb{P}}
\title{{\bf Etude des mesures hyperboliques pour les applications méromorphes}}
\author{Henry De Thélin et Franck Nguyen Van Sang}
\date{}
\begin{document}
\maketitle

\def\figurename{{Fig.}}%
\def\proofname{Preuve}
\def\contentsname{Sommaire}%

\begin{abstract}

Nous montrons un résultat du type Closing Lemma pour les mesures non uniformément hyperboliques associées à des applications méromorphes. Nous prouvons aussi qu'il est possible d'approximer la dynamique de telles mesures par des codages du type Bernoulli.

\end{abstract}

\selectlanguage{english}
\begin{center}
{\bf{ }}
\end{center}

\begin{abstract}

We prove a Closing Lemma for nonuniformly hyperbolic measures of meromorphic maps. We prove also a theorem of approximation of the dynamics of such measures by Bernoulli coding maps.

\end{abstract}

\selectlanguage{francais}

Mots-clefs: dynamique complexe, Closing Lemma, entropie, approximation par des horsehoes. 

Classification: 32H50, 37Dxx.

\section*{{\bf Introduction}}
\par

Soit $X$ une variété Riemannienne lisse compacte de dimension $k$ et $f: X \longrightarrow X$ un difféomorphisme de classe $C^{1 + \alpha}$. 

On considère $\mu$ une mesure de probabilité invariante par $f$, ergodique et hyperbolique (c'est-à-dire que ses exposants de Lyapounov sont non nuls).

Un résultat majeur, obtenu par A. Katok (voir \cite{Ka}), est la construction dans ce contexte d'orbites périodiques. C'est le Closing Lemma: sous certaines hypothèses, lorsqu'un point $x$ et son itérée $f^m(x)$ (avec $m \geq 1$) sont proches, on peut trouver un point périodique près de $x$. Ce résultat est crucial pour approximer le système dynamique par des horseshoes (voir A. Katok et L. Mendoza dans \cite{KH}).

L'objet de cet article est d'étendre ces théorèmes en dynamique méromorphe.

Soit donc $X$ une variété complexe compacte de dimension $k$ et $f: X \longrightarrow X$ une application méromorphe dominante. Nous noterons $I$ son ensemble d'indétermination. 

On considère $\mu$ une mesure de probabilité qui vérifie $\int \log d(x,I) d \mu (x)>-\infty$. On suppose $\mu$ invariante par $f$, ergodique et hyperbolique, c'est-à-dire que ses exposants de Lyapounov vérifient : 

$$\chi_1\geq  \cdots \geq \chi_{m_0}>0>\chi_{m_0+1}\geq \cdots \geq \chi_{k} \mbox{  pour un  } 1\leq m_0\leq k-1.$$ 

Sous ces hypothèses, en suivant la stratégie d'A. Katok (voir \cite{Ka}), nous démontrons le Closing Lemma (voir le paragraphe \ref{closinglemma} et le théorème \ref{theoreme1}).

Ensuite, grâce à la démonstration de ce résultat, nous pouvons approcher la dynamique de $\mu$ par des ensembles qui sont codés par des Bernoulli. Plus précisément, on a

\begin{Thm}{\label{theoreme2}}

On suppose en plus que $h_{\mu}(f)>0$. 

Alors pour tout $\rho > 0$ et $\varphi_1,\cdots,\varphi_{k_0}\in \mathrm{C}^0(X,\mathbb{R})$ on a: 
\begin{enumerate}
\item Il existe $n\geq 1$ et un codage: 

$$
\xymatrix{
\{1,\cdots,N\}^\mathbb{Z}  \ar[r]^{\sigma}  \ar[d]_{S_0}  & \{1,\cdots,N\}^\mathbb{Z} \ar[d]^{S_0}\\
\Lambda_0 \ar[r]^{f^n} & \Lambda_0
}
$$

où $N=e^{h_{\mu}(f)n-\rho n}$, $f^n\circ S_0=S_0\circ \sigma$, $\sigma$ est le décalage à  droite et $S_0$ est continue. 

De plus $\nu_0=(S_0)_*\lambda_0$ (où $\lambda_0$ est la mesure de Bernoulli) est une mesure invariante par $f^n$ et d'entropie métrique supérieure ou égale à  $n(h_{\mu}(f)-\rho)$.
\item $h_{\mathrm{top}}(f_{| \Lambda_0})\geq h_{\mu}(f)-\rho$ (où ici $\Lambda_0=S_0(\{1,\cdots,N\}^\mathbb{Z})$).
\item Si on note $\Lambda=\Lambda_0 \cup f(\Lambda_0) \cup \cdots \cup f^{n-1}(\Lambda_0)$, on a que $\Lambda$ est contenu dans un $\rho$-voisinage de $\mbox{supp}( \mu)$ et la mesure 

$$\nu=\frac{1}{n}(\nu_0+f_* \nu_0+\cdots+(f^{n-1})_* \nu_0),$$

 portée par $\Lambda$, est $f$-invariante et vérifie: 

$$\forall i=1,\cdots,k_0  \mbox{    } \mbox{    }  \left| \int \varphi_i\mathrm{d}\nu - \int \varphi_i\mathrm{d}\mu \right| < \rho.$$

\end{enumerate}
\end{Thm}

\begin{Rem}
Dans le théorème précédent, si on sait de plus que $f$ est inversible et que $\int \log d(x,I(f^{-1}))   d \mu(x)>-\infty$ (où $I(f^{-1})$ est l'ensemble d'indétermination de $f^{-1}$), alors $S_0$ est un homéomorphisme et $\Lambda$ est un horseshoe comme dans le cas des difféomorphismes (voir \cite{KH}).
\end{Rem}

Signalons que l'hypothèse $\int \log d(x,I) d \mu (x)>-\infty$ est assez faible et qu'elle est naturelle pour faire de la théorie de Pesin. Lorsque $f$ est inversible et que $\mu$ est plus régulière, typiquement une mesure SRB (voir la condition 1.1 de \cite{KS}), il existe aussi des résultats d'approximation des systèmes dynamiques dits singuliers (voir \cite{KS}) qui se basent sur l'approche de Pesin (voir \cite{Pe}).

Ce type de théorème est vrai aussi pour les applications avec singularités lorsque tous les exposants de Lyapounov sont strictement positifs (voir \cite{Ge}). 

Par ailleurs, il existe aussi des résultats de codage par des Bernouilli pour la mesure d'entropie maximale d'un endomorphisme holomorphe de $\Pp^k(\Cc)$ (voir \cite{Br} et \cite{Du1}).

Le Closing Lemma de notre article est à comprendre en tant que fermeture d'une orbite sans changement de l'application $f$. Il y a aussi des théorèmes en dynamique holomorphe où l'orbite est fermée en faisant varier $f$ (voir \cite{FS1} et \cite{FS2}).

Voici maintenant le plan de l'article: dans un premier paragraphe nous faisons des rappels sur la théorie de Pesin et sur les théorèmes de transformées de graphes que nous utiliserons. Dans le second, nous donnons un énoncé précis du Closing Lemma et nous le démontrons. Le troisième paragraphe est consacré à la preuve du théorème \ref{theoreme2}, c'est-à-dire à l'approximation du système dynamique par des Bernoulli.

\section{{\bf Théorie de Pesin et transformées de graphes}}{\label{paragraphePesin}}
\par

Dans ce paragraphe nous rappelons la théorie de Pesin ainsi que les théorèmes de transformées de graphes.

\subsection{{\bf Théorème d'Oseledets et théorie de Pesin}}{\label{Pesin}}

Soit $X$ une variété complexe compacte de dimension $k$ et $f: X \rightarrow X$ une application méromorphe dominante. On notera $I$ son ensemble d'indétermination.

On munit $X$ d'une famille de cartes $(\tau_x)_{x \in X}$ qui vérifient $\tau_x(0)=x$, $\tau_x$ est définie sur une boule $B(0, \epsilon_0) \subset \Cc^k$ avec $\epsilon_0$ indépendant de $x$ et la norme de la dérivée première et seconde de $\tau_x$ sur $B(0, \epsilon_0)$ est majorée par une constante indépendante de $x$. Pour construire ces cartes il suffit de partir d'une famille finie $(U_i, \psi_i)$ de cartes de $X$ et de les composer par des translations.

On considère une probabilité $\mu$ telle que $\int \log d(x,I) d \mu(x) > - \infty$. On la suppose aussi invariante par $f$ et ergodique. On note $\Omega=X \setminus \cup_{i \geq 0} f^{-i}(I)$. Comme $\mu$ ne charge pas $I$ et qu'elle est invariante par $f$, $\mu$ est une probabilité de $\Omega$.

On définit l'extension naturelle:

$$\widehat{\Omega}:= \{ \widehat{x}=( \cdots, x_0, \cdots , x_n , \cdots) \in \Omega^{\Zz} \mbox{ , } f(x_{n})=x_{n+1} \}.$$

Dans cet espace, $f$ induit une application $\widehat{f}$ qui est le décalage à  gauche. Si on note $\pi$ la projection canonique $\pi(\widehat{x})=x_0$ alors $\mu$ se relève en une unique probabilité $\widehat{\mu}$ invariante par $\widehat{f}$ qui vérifie $\pi_{*} \widehat{\mu}=\mu$.

Dans toute la suite, on notera $f_x= \tau_{f(x)}^{-1} \circ f \circ \tau_x$ qui est définie au voisinage de $0$ quand $x$ n'est pas dans $I$. Le cocycle auquel nous allons appliquer la théorie de Pesin est:
\begin{equation*}
\begin{split}
A :\ & \widehat{\Omega} \longrightarrow M_k(\Cc)\\
& \widehat{x} \longrightarrow Df_x(0)\\
\end{split}
\end{equation*}

où $M_k(\Cc)$ est l'ensemble des matrices carrées $k \times k$ à  coefficients dans $\Cc$ et $\pi(\widehat{x})=x$. Grâce à l'hypothèse d'intégrabilité de la fonction $\log d(x,I)$, on a (voir le lemme 8 de \cite{Det2})

$$\int \log^+ \| A( \widehat{x}) \| d \widehat{\mu} ( \widehat{x}) < + \infty.$$

Il en découle que l'on a un théorème du type Oseledets (voir  \cite{FLQ} et \cite{Th} ainsi que le théorème 2.3 de \cite{N}, le théorème 6.1 dans \cite{Du} et \cite{M1}):

\begin{Thm}

 Il existe des réels $\lambda_1 > \lambda_2 > \cdots > \lambda_l \geq - \infty$, des entiers $m_1, \cdots, m_l$ et un ensemble $\widehat{\Gamma}$ de mesure pleine pour $\widehat{\mu}$ tels que pour $\widehat{x} \in \widehat{\Gamma}$ on ait une décomposition de $\Cc^k$ de la forme $\Cc^k= \bigoplus_{i=1}^{l} E_i(\widehat{x})$ où les $E_i(\widehat{x})$ sont des sous-espaces vectoriels de dimension $m_i$ qui vérifient:

1) $A(\widehat{x}) E_i(\widehat{x}) \subset E_i(\widehat{f}(\widehat{x}))$ avec égalité si $\lambda_i > - \infty$.

2) Pour $v \in E_i(\widehat{x}) \setminus \{0 \}$, on a 
$$\lim_{n \rightarrow + \infty} \frac{1}{n} \log \|A(\widehat{f}^{n-1}(\widehat{x})) \cdots A(\widehat{x}) \| = \lambda_i.$$

Si de plus, $\lambda_i > - \infty$, on a la même limite quand $n$ tend vers $- \infty$.

Pour tout $\gamma > 0$, il existe une fonction $C_{\gamma} :  \widehat{\Gamma} \longrightarrow GL_k(\Cc)$ telle que pour $\widehat{x} \in \widehat{\Gamma}$:

1) $\lim_{n \rightarrow \infty} \frac{1}{n} \log \| C^{\pm 1}_{\gamma} (\widehat{f}^n(\widehat{x})) \|=0$ (on parle de fonction tempérée).

2) $C_{\gamma}(\widehat{x})$ envoie la décomposition standard $\bigoplus_{i=1}^{l} \Cc^{m_i}$ sur $\bigoplus_{i=1}^{l} E_i(\widehat{x})$.

3) La matrice $A_{\gamma}(\widehat{x})= C_{\gamma}^{-1}(\widehat{f}(\widehat{x})) A(\widehat{x}) C_{\gamma}(\widehat{x})$ est diagonale par bloc $(A^1_{\gamma}(\widehat{x}), \cdots, A^l_{\gamma}(\widehat{x}))$ où chaque $A^{i}_{\gamma}(\widehat{x})$ est une matrice carrée $m_i \times m_i$ et
$$\forall v \in \Cc^{m_i}   \mbox{    } \mbox{  on a   } \mbox{    } e^{\lambda_i - \gamma} \|v\| \leq \| A^{i}_{\gamma}(\widehat{x}) v \| \leq e^{\lambda_i + \gamma} \|v\|$$

si $\lambda_i > - \infty$ et  

$$\forall v \in \Cc^{m_l}   \mbox{    } \mbox{     } \mbox{    }  \| A^{l}_{\gamma}(\widehat{x}) v \| \leq e^{- 4 \gamma} \|v\|$$

si $\lambda_l= - \infty$.

\end{Thm}

Pour le dernier point, voir la remarque après le théorème 14 de \cite{Det3}.

Notons maintenant $g_{\widehat{x}}$ la lecture de $f_x$ dans les cartes $C_{\gamma}$ c'est-à-dire $g_{\widehat{x}}= C_{\gamma}^{-1}(\widehat{f}(\widehat{x}))  \circ f_x \circ C_{\gamma}(\widehat{x})$ où $\pi(\widehat{x})=x$. 

Nous allons construire des cartes de Pesin qui seront adaptées pour montrer le Closing Lemma.

\begin{Pro}{\label{proposition10}}

Il existe un ensemble $\widehat{\Gamma'}$ de mesure pleine pour $\widehat{\mu}$ et une application mesurable $r : \widehat{\Gamma'} \rightarrow ]0, 1]$ tels que pour tout $\widehat{x} \in \widehat{\Gamma'}$ on ait $e^{- \gamma} \leq \frac{r(\widehat{f}(\widehat{x}))}{r(\widehat{x})} \leq e^{\gamma}$ et

1) $g_{\widehat{x}}(0)=0$.

2) $D g_{\widehat{x}}(0)=A_{\gamma}(\widehat{x})$.

3) $g_{\widehat{x}}(w)$ est holomorphe pour $\| w \| \leq r(\widehat{x})$ et on a $\|D^2 g_{\widehat{x}}(w)\| \leq  \frac{1}{r(\widehat{x})}$ pour $\| w \| \leq r(\widehat{x})$.

En particulier, si on note $g_{\widehat{x}}(w)=D g_{\widehat{x}}(0)w + h(w)$, on a $\| Dh(w) \| \leq \frac{1}{r(\widehat{x})} \|w\|$ pour $\| w \| \leq r(\widehat{x})$.

\end{Pro}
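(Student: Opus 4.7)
Ma stratégie est la suivante. D'abord, les points 1 et 2 seront immédiats: $f_x(0)=\tau_{f(x)}^{-1}(f(x))=0$ donne $g_{\widehat{x}}(0)=0$, et $Df_x(0)=A(\widehat{x})$ donne par conjugaison $Dg_{\widehat{x}}(0)=C_\gamma^{-1}(\widehat{f}(\widehat{x}))\,A(\widehat{x})\,C_\gamma(\widehat{x})=A_\gamma(\widehat{x})$. L'essentiel du travail est la construction d'une fonction $r$ satisfaisant le point~3 ainsi que la condition multiplicative $e^{-\gamma}\leq r(\widehat{f}(\widehat{x}))/r(\widehat{x})\leq e^\gamma$.

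La pierre angulaire est une estimée brute sur $f_x$ près de $0$. Je montrerais que, puisque $X$ est compacte et que les cartes $\tau_x$ ont leurs dérivées d'ordre un et deux uniformément bornées, les inégalités de Cauchy appliquées à $f$ sur une boule centrée en $x$ de rayon proportionnel à $d(x,I)$ fournissent une constante absolue $c>0$ telle que $f_x$ soit holomorphe sur $B(0,c\,d(x,I))$ et y vérifie $\|D^2 f_x\|\leq C/d(x,I)^2$. La règle de la chaîne appliquée à $g_{\widehat{x}}= C_\gamma^{-1}(\widehat{f}(\widehat{x})) \circ f_x \circ C_\gamma(\widehat{x})$ donne alors
\[
\|D^2 g_{\widehat{x}}(w)\| \;\leq\; \frac{C\,\|C_\gamma^{-1}(\widehat{f}(\widehat{x}))\|\cdot \|C_\gamma(\widehat{x})\|^2}{d(x,I)^2}
\]
sur un domaine approprié. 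Les conditions « $g_{\widehat{x}}$ holomorphe sur $B(0,r(\widehat{x}))$ » et « $\|D^2 g_{\widehat{x}}\|\leq 1/r(\widehat{x})$ » se réduisent ainsi à majorer $r(\widehat{x})$ par une fonction mesurable $u(\widehat{x})\in\,]0,1]$, combinaison multiplicative explicite de $d(x,I)$ et des normes $\|C_\gamma^{\pm 1}\|$ évaluées en $\widehat{x}$ et en $\widehat{f}(\widehat{x})$.

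Il restera ensuite à rendre $r$ multiplicativement tempérée, ce que $u$ n'est pas a priori. Cependant, $\log u$ sera sous-linéaire le long des orbites de $\widehat{f}$: le théorème d'Oseledets rappelé ci-dessus assure $\tfrac{1}{|n|}\log\|C_\gamma^{\pm 1}(\widehat{f}^n(\widehat{x}))\|\to 0$, et l'hypothèse $\log d(\cdot,I)\in L^1(\mu)$, combinée à l'invariance de $\mu$ et à un argument standard de Borel-Cantelli par l'inégalité de Markov, donne $\tfrac{1}{|n|}\log d(f^n(x),I)\to 0$ $\widehat{\mu}$-p.p. pour $|n|\to\infty$. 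J'appliquerais alors le lemme classique de temperedness (dû à Mañé): toute fonction mesurable positive sous-exponentielle le long des orbites admet une fonction mesurable $r\leq u$, à valeurs dans $]0,1]$, vérifiant la borne multiplicative voulue sur un sous-ensemble $\widehat{\Gamma'}\subset\widehat{\Gamma}$, $\widehat{f}$-invariant et de mesure pleine. La majoration finale $\|Dh(w)\|\leq\|w\|/r(\widehat{x})$ résultera de l'intégration de $D^2 g_{\widehat{x}}$ le long du segment $[0,w]$, combinée à $Dh(0)=0$.

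La difficulté principale du plan est la première étape: obtenir l'estimée uniforme $\|D^2 f_x\|\leq C/d(x,I)^2$ exige de formaliser proprement le fait que $f$, bien que méromorphe, se comporte comme une application holomorphe bornée sur tout voisinage évitant $I$ de taille $\sim d(x,I)$. Ceci repose sur la régularité uniforme des cartes $\tau_x$ et sur un contrôle local de $f$ près de $I$; les autres étapes (conjugaison par $C_\gamma$, temperedness de Mañé) sont classiques une fois la première estimée en place.
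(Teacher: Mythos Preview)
Your proposal is correct and follows essentially the same route as the paper: immediate verification of 1--2, a crude bound $\|D^2 f_x\|\leq C\,d(x,I)^{-p}$ near $I$ (the paper quotes \cite{DiDu} for this, with a general exponent $p$ rather than your $p=2$ from Cauchy), transfer to $g_{\widehat{x}}$ via the chain rule, sublinear growth of $\log d(\cdot,I)$ along orbits, and then the classical temperedness lemma (the paper cites Lemma S.2.12 of \cite{KH}; you attribute it to Ma\~n\'e). The only cosmetic difference is that the paper obtains the sublinear growth of $\log d(x_n,I)$ via Birkhoff's theorem rather than Borel--Cantelli; both arguments work.
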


\begin{proof}

On se place sur $\widehat{\Gamma}$. Les deux premiers points proviennent du théorème précédent.

La démonstration de la troisième propriété est basée sur la proposition 8 de \cite{Det1} (voir aussi la proposition 10 de \cite{Det2}). On reprend les notations de \cite{Det2}.

On a l'existence d'une constante $\epsilon_1 > 0$ telle que $g_{\widehat{x}}(w)$ est holomorphe pour $\| w \| \leq \frac{\epsilon_1 dist(x,I)^p}{\|C_{\gamma}(\widehat{x})\|}$. De plus, pour $\| w \| \leq \frac{\epsilon_1 dist(x,I)^p}{\|C_{\gamma}(\widehat{x})\|}$, on a $D g_{\widehat{x}}(w)=C_{\gamma}^{-1}(\widehat{f}(\widehat{x})) \circ Df_x(C_{\gamma}(\widehat{x}) (w)) \circ C_{\gamma}(\widehat{x})$, ce qui donne la majoration

$$\|D^2 g_{\widehat{x}}(w)\| \leq \|C_{\gamma}^{-1}(\widehat{f}(\widehat{x}))\| \| D^2 f_x(C_{\gamma}(\widehat{x}) (w))\|   \|C_{\gamma}(\widehat{x})\|^2.$$

\noindent Comme dans la preuve de la proposition 10 de \cite{Det2}, en utilisant que $\| D^2f(x) \| \leq C d(x,I)^{-p}$ (voir le lemme 2.1 de \cite {DiDu}) et que l'image par $\tau_x$ du segment $[0, C_{\gamma}(\widehat{x})(w)]$ vit dans la boule $B(x , K \| C_{\gamma}(\widehat{x})(w) \| )$ où $K$ est une constante qui ne dépend que de $X$, on a (toujours pour $\| w \| \leq \frac{\epsilon_1 dist(x,I)^p}{\|C_{\gamma}(\widehat{x})\|}$)

\begin{equation*}
\begin{split}
\|D^2 g_{\widehat{x}}(w)\| &\leq   C \| C_{\gamma}^{-1}(\widehat{f}(\widehat{x}))\|   \| C_{\gamma}(\widehat{x}) \|^2 dist(\tau_x([0, C_{\gamma}(\widehat{x})(w)]), I)^{-p} \\
&\leq C(1-K \epsilon_1)^{-p}  \| C_{\gamma}^{-1}(\widehat{f}(\widehat{x}))\| \|C_{\gamma}(\widehat{x})\|^2  dist(x,I)^{-p} \\
& \leq 2C \|C_{\gamma}^{-1}(\widehat{f}(\widehat{x}))\|  \|C_{\gamma}(\widehat{x})\|^2  dist(x,I)^{-p} .\\
\end{split}
\end{equation*}

Notons maintenant (avec $x= \pi( \widehat{x})$)

$$\alpha(\widehat{x})= \max \left( 1, \frac{\|C_{\gamma}(\widehat{x})\|}{\epsilon_1 dist(x,I)^p}, 2C \|C_{\gamma}^{-1}(\widehat{f}(\widehat{x}))\|  \|C_{\gamma}(\widehat{x})\|^2  dist(x,I)^{-p} \right).$$

Comme $\int \log d(\pi(\widehat{x}),I) d \widehat{\mu} (\widehat{x})= \int \log d(x,I) d \mu(x) > - \infty$, le théorème de Birkhoff implique qu'il existe un ensemble de mesure pleine $ \mathcal{A}$ pour $\widehat{\mu}$ sur lequel

\begin{equation*}
\begin{split}
\lim_{n \rightarrow + \infty} \frac{1}{n} \sum_{i=0}^{n-1} \log d(\pi(\widehat{f}^{i}(\widehat{x})),I)&=\lim_{n \rightarrow + \infty} \frac{1}{n} \sum_{i=0}^{n-1} \log d(\pi(\widehat{f}^{-i}(\widehat{x})),I)\\
=\int \log d(\pi(\widehat{x}),I) d \widehat{\mu} (\widehat{x}) > - \infty.
\end{split}
\end{equation*}

Classiquement, cela implique que pour $\widehat{x} \in \mathcal{A}$, on a

$$\lim_{n \rightarrow \pm \infty} \frac{1}{n}  \log d(\pi(\widehat{f}^{n}(\widehat{x})),I)=0. $$

Comme les $\|C_{\gamma}(\widehat{x})\|$ sont aussi tempérées, on obtient que $\alpha(\widehat{x})$ est une fonction tempérée sur $\widehat{\Gamma'}= \widehat{\Gamma} \cap \mathcal{A}$.

Par le lemme S.2.12 dans \cite{KH}, il existe une fonction $\alpha_{\gamma} \geq \alpha$ qui vérifie $e^{- \gamma} \leq \frac{\alpha_{\gamma}(\widehat{f}(\widehat{x}))}{\alpha_{\gamma}(\widehat{x})} \leq e^{\gamma}$ pour tout $\widehat{x} \in \widehat{\Gamma'}$.

On pose alors $r(\widehat{x})= \frac{1}{\alpha_{\gamma}(\widehat{x})}$ (qui est plus petit que $1$). On a $e^{- \gamma} \leq \frac{r(\widehat{f}(\widehat{x}))}{r(\widehat{x})} \leq e^{\gamma}$ pour tout $\widehat{x} \in \widehat{\Gamma'}$, la fonction $g_{\widehat{x}}(w)$ est holomorphe pour $\| w \| \leq r(\widehat{x})= \frac{1}{\alpha_{\gamma}(\widehat{x})} \leq \frac{1}{\alpha(\widehat{x})} \leq \frac{\epsilon_1 dist(x,I)^p}{\|C_{\gamma}(\widehat{x})\|}$ et enfin

$$\|D^2 g_{\widehat{x}}(w)\|  \leq 2C \|C_{\gamma}^{-1}(\widehat{f}(\widehat{x}))\|  \|C_{\gamma}(\widehat{x})\|^2  dist(x,I)^{-p}  \leq \alpha(\widehat{x}) \leq \alpha_{\gamma}(\widehat{x})= \frac{1}{r(\widehat{x})}$$

pour $\|w\| \leq r(\widehat{x})$. C'est ce que l'on voulait démontrer.

\end{proof}

\subsection{{\bf Transformées de graphes}}

Nous utiliserons les théorèmes de transformées de graphes en avant et en arrière par des applications non inversibles.

Commençons par la transformée de graphe en avant (voir le paragraphe 4 de \cite{Det1}).

On se place dans $\Cc^k$ et dans la suite $\| \cdot \|$ désignera la norme Euclidienne et $B_l(0,R)$ la boule de centre $0$ et de rayon $R$ dans $\Cc^l$.

On considère

$$g(X,Y)=(g_1(X,Y),g_2(X,Y))=(AX + R(X,Y), BY + U(X,Y))$$

avec $(X,0) \in E_1$ (abscisses), $(0,Y) \in E_2$ (ordonnées) et $A: \Cc^{k_1} \longrightarrow \Cc^{k_1}$, $B: \Cc^{k_2} \longrightarrow \Cc^{k_2}$ des applications linéaires avec $k=k_1 + k_2$. On suppose que $g: B_k(0,R_0) \longrightarrow B_k(0, R_1)$ est holomorphe avec $R_0 \leq R_1$, $g(0)=0$ et $\max(\|DR(Z)\|, \|DU(Z)\|) \leq \delta$ sur $B_k(0,R_0)$. 

On supposera aussi $A$ inversible, $\|B\| < \|A^{-1}\|^{-1}$ et on note $\xi=1 - \|B\| \|A^{-1}\| \in ]0,1]$.

Nous utiliserons la version raffinée suivante du théorème de la transformée de graphe (voir le paragraphe 4 de \cite{Det1} ) : 

\begin{Thm}

Soit $\{(X, \phi(X)), X \in D\}$ un graphe dans $B_k(0,R_0)$ au-dessus d'une partie $D$ de $E_1$ qui vérifie $Lip(\phi)\leq \gamma_0 \leq 1$. 

Si $\delta \|A^{-1}\| (1 + \gamma_0)<1$ alors l'image par $g$ de ce graphe est un graphe au-dessus de $\pi_0(g( \mbox{graphe de } \phi))$ où $\pi_0$ est la projection sur les abscisses. Par ailleurs, si $(X, \psi(X))$ désigne ce nouveau graphe, on a:

$$\| \psi(X_1) - \psi(X_2) \| \leq \frac{\| B \| \gamma_0 + \delta( 1 + \gamma_0)}{\|A^{-1}\|^{-1} - \delta(1 + \gamma_0)}\|X_1 - X_2\|$$

qui est inférieur à  $\gamma_0 \|X_1 - X_2\|$ si $\delta \leq \epsilon(\gamma_0, \xi)$.

Enfin, si de plus $B_{k_1}(0, \alpha) \subset D$ et $\| \phi(0)\| \leq \beta$, alors $\pi_0(g( \mbox{graphe de } \phi))$ contient $B_{k_1}(0, (\|A^{-1}\|^{-1} - \delta(1 + \gamma_0)) \alpha  - \delta \beta )$ et $\| \Psi(0) \| \leq (1 + \gamma_0)(\| B \| \beta + \delta \beta + \| D^2g\|_{B_k(0,R_0)} \beta^2)$ (si $\delta \leq \epsilon(\gamma_0, \xi)$).

\end{Thm}

\begin{proof}

Il s'agit de reprendre la preuve de \cite{Det1} et de modifier juste la fin.

Tout est identique jusqu'à la ligne 11 p.102 sauf qu'ici les graphes sont au-dessus des abscisses.

Ensuite, on a

$$ \| \lambda(X_1)-\lambda(X_2)\|\geq (\| A^{-1}\|^{-1}-\delta(1+\gamma_0))\| X_1-X_2\| .$$

En particulier, 

\begin{equation*}
\begin{split}
\| \lambda(X)-\lambda(0)\| & \geq (\| A^{-1}\|^{-1}-\delta(1+\gamma_0))\| X\| \\
& =(\| A^{-1}\|^{-1}-\delta(1+\gamma_0))\alpha \mbox{  si  } X \in \partial B(0,\alpha).
\end{split}
\end{equation*}

Mais $\lambda(0)=A 0 + R(0,\phi(0))$ est de norme inférieure à  

$$\| R(0,\phi(0))\| =\| R(0,\phi(0)) - R(0,0)\|\leq \delta \| \phi(0)\| \leq \delta \beta.$$ 

On en déduit que $\pi_0(g( \mbox{graphe de } \phi))$ contient $B_{k_1}(0, (\|A^{-1}\|^{-1} - \delta(1 + \gamma_0)) \alpha  - \delta \beta )$.

C'est la modification que l'on voulait montrer.

\end{proof}

Nous utiliserons aussi la transformée de graphe en arrière dans le cas non-inversible (voir \cite{Du} théorème 6.4). 

\begin{Thm}{\label{noninversible}}

Soit $(\phi(Y),Y)$ un graphe dans $B_k(0,R_0)$ au-dessus de $B_{k_2}(0, \alpha)$  qui vérifie $\|\phi(0)\| \leq \beta$ et $Lip( \phi) \leq \gamma_0 \leq 1$ pour un certain $\beta \leq \alpha \leq R_0 /4$. Soit $\gamma > 0$ tel que $e^{2 \gamma} < \frac{3}{2}$.

On suppose que

$$\gamma_0(1- \xi)+2 \delta(1 + \gamma_0) \|A^{-1}\| \leq 1  \mbox{   }\mbox{  et }$$

$$(\gamma_0 \|B\|+ \delta(1 + \gamma_0))(\|A^{-1}\|^{-1}-\delta(1 + \gamma_0))^{-1} \leq e^{- \gamma} \gamma_0 \mbox{   }\mbox{  et }$$

$$(\|B\| + 2 \delta)e^{2 \gamma} + \delta \leq 1 \mbox{   }\mbox{  et } \mbox{   } \delta(1+ \gamma_0) \leq \min \{ (\|A^{-1}\|^{-1} - \gamma_0 \| B \|)/2 ,\|A^{-1}\|^{-1} -e^{2 \gamma} \}.$$

Alors il existe $\psi:  B_{k_2} ( 0, \alpha e^{2 \gamma}) \longrightarrow \Cc^{k_1}$ avec $Lip (\psi) \leq e^{- \gamma} \gamma_0$, $\| \psi(0) \| \leq \beta e^{-2\gamma}$ et $g(\mbox{graphe}(\psi)) \subset \mbox{graphe}(\phi)$.

\end{Thm}

\begin{proof}

Il s'agit d'une légère adaptation de la preuve du théorème 6.4 dans \cite{Du}. Nous en reprenons aussi les notations.

Soit $Y \in B_{k_2} ( 0, \alpha e^{2 \gamma})$ et

$$L_{\beta e^{- 2 \gamma}}=\{ (X,Y) \mbox{ , } \|X\| \leq \|Y\| + \beta e^{- 2 \gamma} \}.$$

Tout d'abord, on a $L_{\beta e^{- 2 \gamma}} \subset B_k(0,R_0)$ car pour $(X,Y) \in L_{\beta e^{- 2 \gamma}}$

\begin{equation*}
\begin{split}
\|(X,Y)\| & \leq \|X\| + \|Y\| \leq 2 \|Y\|+ \beta e^{-2 \gamma} \leq 2 \alpha e^{2 \gamma} + \alpha\\
& \leq \frac{R_0}{2} e^{2 \gamma} + \frac{R_0}{4} < R_0.
\end{split}
\end{equation*}

Ensuite, toujours pour $(X,Y) \in L_{\beta e^{- 2 \gamma}}$, on a

\begin{equation*}
\begin{split}
\|g_2(X,Y)\| &\leq \|B\| \|Y\| + \|U(X,Y)\|= \|B\| \|Y\| + \|U(X,Y)-U(0,0)\|\\
& \leq \|B\| \|Y\| + \delta \|(X,Y)\| \leq \|B\| \|Y\| + \delta \|X\| + \delta \|Y\|\\
& \leq ( \|B\| + 2 \delta) \|Y\| + \delta \beta e^{- 2 \gamma}  < ( \|B\| + 2 \delta) \alpha e^{2 \gamma} + \delta \alpha \leq \alpha \\
\end{split}
\end{equation*}

car $( \|B\| + 2 \delta) e^{2 \gamma} + \delta \leq 1$.

On en déduit que $\|g_2(X,Y)\| < \alpha$ pour tout $(X,Y) \in L_{\beta e^{- 2 \gamma}}$. En particulier, si on note

$$U= \{Y \in B_{k_2}(0,R_0) \mbox{  ,  }  g_2(L_{\beta e^{- 2 \gamma}}) \subset B_{k_2}(0,\alpha) \},$$

on a  $B_{k_2} ( 0, \alpha e^{2 \gamma}) \subset U$.

Soit $Y \in B_{k_2} ( 0, \alpha e^{2 \gamma})$ fixé. On veut trouver $(X,Y) \in L_{\beta e^{- 2 \gamma}}$ avec $\phi(g_2(X,Y)) =g_1(X,Y)$. C'est équivalent à trouver $(X,Y) \in L_{\beta e^{- 2 \gamma}}$ avec

$$\Lambda_Y(X)=A^{-1} \left[ \phi(g_2(X,Y)) - (g_1(X,Y)-AX) \right]=X.$$

Pour cela montrons d'abord que $\Lambda_Y: \overline{B_{k_1}(0, \|Y\| + \beta e^{-2 \gamma})} \longrightarrow \overline{B_{k_1}(0, \|Y\| + \beta e^{-2 \gamma})}$. Pour $X \in \overline{B_{k_1}(0, \|Y\| + \beta e^{-2 \gamma})}$, on a:

\begin{equation*}
\begin{split}
\| \Lambda_Y(X) \| & \leq \|A^{-1} \| \left[ \| \phi(g_2(X,Y)) - \phi(0)\| + \| \phi(0)\| + \|g_1(X,Y)-AX\| \right]\\
& \leq \|A^{-1} \| \left[ \gamma_0 ( \|B\| \|Y\| + \|U(X,Y)\|) + \beta + \|R(X,Y)\| \right]\\
& \leq \|A^{-1} \| \left[ \gamma_0 ( \|B\| \|Y\| + \delta \|X\| + \delta \|Y\|) + \beta + \delta \|X\| + \delta \|Y\|  \right]\\
& \leq \|A^{-1} \| \left[ \gamma_0 ( \|B\| \|Y\| +  2\delta \|Y\| ) + \gamma_0 \delta \beta e^{-2 \gamma} + \beta + 2 \delta \|Y\| + \delta \beta e^{-2 \gamma}  \right]\\
& = \|A^{-1} \| \left[ ( \gamma_0 \|B\| +  2\delta (1+ \gamma_0)) \|Y\| + \beta (1+ \delta ( 1+ \gamma_0 )e^{-2 \gamma} ) \right].\\
\end{split}
\end{equation*}

Mais $\gamma_0 \|B\| +  2\delta (1+ \gamma_0) \leq \|A^{-1}\|^{-1}$ et $e^{2 \gamma} + \delta(1 + \gamma_0) \leq \|A^{-1}\|^{-1}$, d'où

$$\| \Lambda_Y(X) \| \leq  \|A^{-1} \| \left[ \|A^{-1}\|^{-1} \|Y\| + \beta e^{- 2 \gamma} \|A^{-1}\|^{-1} \right] \leq \|Y\| + \beta e^{- 2 \gamma}.$$

C'est ce que l'on voulait.

Maintenant, en suivant mot pour mot la preuve de \cite{Du}, on a que $\Lambda_Y$ est une contraction ce qui donne l'existence et l'unicité d'un point fixe dans $\overline{B_{k_1}(0, \|Y\| + \beta e^{-2 \gamma})}$ que l'on note $\Psi(Y)$.

Ensuite, toujours en suivant  \cite{Du} et en utilisant l'hypothèse

$$(\gamma_0 \|B\|+ \delta(1 + \gamma_0))(\|A^{-1}\|^{-1}-\delta(1 + \gamma_0))^{-1} \leq e^{- \gamma} \gamma_0 ,$$

on a $Lip (\psi) \leq e^{- \gamma} \gamma_0$.

Enfin, comme $\Lambda_0: \overline{B_{k_1}(0, \beta e^{-2 \gamma})} \longrightarrow \overline{B_{k_1}(0, \beta e^{-2 \gamma})}$, on obtient que $\| \psi(0) \| \leq \beta e^{-2 \gamma}$. Cela termine la démonstration.

\end{proof}

\section{{\bf Le Closing Lemma}}{\label{closinglemma}}
\par

\subsection{{\bf Cadre et énoncé du théorème}}

On commence par donner le cadre du théorème. On utilisera les notations et les résultats du paragraphe \ref{paragraphePesin}.

Soit $\mu$ une mesure de probabilité avec $\int \log d(x,I) d \mu (x)>-\infty$. On la suppose invariante par $f$, ergodique et hyperbolique, c'est-à-dire que ses exposants de Lyapounov vérifient : 

$$\chi_1\geq  \cdots \geq \chi_{m_0}>0>\chi_{m_0+1}\geq \cdots \geq \chi_{k} \mbox{  pour un  } 1\leq m_0\leq k-1.$$ 

Comme les exposants de Lyapounov ne dépendent pas du choix des cartes pour $X$, on peut supposer que $\mu$ ne charge pas le bord des cartes ($U_i$, $\psi_i$).

On fixe $\gamma>0$ petit devant les exposants de Lyapounov avec $4\gamma<\chi_{m_0}-\chi_{m_0 +1}$ et on applique le théorème d'Oseledets avec ce $\gamma$. 

Soit $0< \delta < \frac{1}{6}$. Pour $\epsilon_0>0$ suffisamment petit, la masse pour $\mu$ d'un $\epsilon_0$-voisinage des bords des cartes qui recouvrent $X$ (pour une métrique fixée sur $X$) est plus petite que $\frac{\delta}{2}$.

On note $V_{\epsilon_0}$ ce voisinage et on a $\widehat{\mu}(\pi^{-1}(V_{\epsilon_0}^c))=\mu(V_{\epsilon_0}^c)\geq 1-\frac{\delta}{2}$.

Par le théorème de Lusin, on peut trouver un ensemble $\Lambda_{\delta}$ compact, inclus dans $\pi^{-1}(V_{\epsilon_0}^c)$ avec $\widehat{\mu}(\Lambda_{\delta})\geq 1-\delta$ et $\widehat{x}\mapsto C_{\gamma}^{\pm 1}(\widehat{x})$, $\widehat{x}\mapsto r(\widehat{x})$ continues sur $\Lambda_{\delta}$. 

On fixe une métrique standard sur $\widehat{\Omega}$ qui vérifie $dist(\widehat{x}, \widehat{y}) \geq dist(\pi(\widehat{x}), \pi(\widehat{y}))$ pour tout $\widehat{x}, \widehat{y} \in \widehat{\Omega}$ et on a alors le théorème : 

\begin{Thm}[Closing Lemma]{\label{theoreme1}}
Soit $\epsilon>0$. Il existe $\eta >0$ tel que si $\widehat{x}\in \Lambda_{\delta}$, $\widehat{f}^m(\widehat{x})\in \Lambda_{\delta}$ pour un certain $m\geq 1$ avec $dist(\widehat{x},\widehat{f}^m(\widehat{x}))<\eta$ alors il existe $z \in X$ tel que : 
\begin{enumerate}
\item $f^m(z)=z$,
\item $dist(f^i(x),f^i(z))\leq \epsilon \times \max(e^{-\gamma i},e^{-\gamma (m-i)})$ pour $0\leq i \leq m$ (où $x=\pi(\widehat{x})$),
\item $z$ est hyperbolique.
\end{enumerate}
\end{Thm}

\subsection{{\bf Preuve du Closing Lemma}}

\subsubsection{{\bf Démonstration de l'existence du point périodique $z$}}

Pour démontrer le Closing Lemma, nous suivons l'approche d'A. Katok (voir \cite{Ka}). Ce qui change c'est que $f$ n'est pas inversible et qu'il y a un ensemble d'indétermination $I$ sur lequel $f$ n'est même pas définie. Pour surmonter ces difficultés, nous utiliserons des transformées de graphes pour applications non inversibles (voir le théorème \ref{noninversible}) et des cartes de Pesin qui prennent en compte $I$.

Soit $0< \gamma_0 < \frac{1}{5}$ fixé suffisamment petit pour que $\gamma_0 < e^{\frac{\gamma}{4}}-1$, $\gamma_0 < 1-e^{- \frac{\gamma}{2}}$ et $\gamma_0e^{\chi_1+\gamma}+e^{-4\gamma} < e^{-\frac{7}{2}\gamma}$. 

$\Lambda_{\delta}$ est compact, donc il existe $r_0>0$ tel que : 

$$\forall \widehat{x}\in \Lambda_{\delta},\ r(\widehat{x})\geq r_0 \mbox{  et  } r_0\leq \| C_{\gamma}(\widehat{x})^{\pm 1}\| \leq \frac{1}{r_0}.$$

Dans un premier temps, nous faisons des transformées de graphes en avant et en arrière dans des boîtes de taille $hr(\widehat{f}^i(\widehat{x}))$ avec $h$ suffisamment petit (nous préciserons à  quel point au fur et à  mesure). 

\bigskip

{\bf{1) Poussées en avant de graphes à  partir de $x$ par $g_{\widehat{x}}, g_{\widehat{f}(\widehat{x})}, \cdots, g_{\widehat{f}^{m-1}(\widehat{x})}$ :} }

Soit $\widehat{x}\in \Lambda_{\delta}$. On note $E_u(\widehat{x})$ la somme directe de tous les $E_i(\widehat{x})$ correspondants aux exposants strictement positifs et $E_s(\widehat{x})$ la somme directe des $E_i(\widehat{x})$ avec des exposants strictement négatifs. 

On a $\mathrm{dim}\ E_u(\widehat{x})=k_1$ et $\mathrm{dim}\ E_s(\widehat{x})=k_2=k-k_1$. 

Pour $i$ compris entre $0$ et $m-1$, on se place dans le repère : 

$$C_{\gamma}^{-1}(\widehat{f}^i(\widehat{x}))E_u(\widehat{f}^i(\widehat{x}))\bigoplus C_{\gamma}^{-1}(\widehat{f}^i(\widehat{x}))E_s(\widehat{f}^i(\widehat{x}))$$

et on a : 

\begin{Lem}{\label{lemme1}}
Soit $(X,\phi(X))$ un graphe au-dessus de $B_{k_1}(0,\alpha)\subset C_{\gamma}^{-1}(\widehat{f}^i(\widehat{x}))E_u(\widehat{f}^i(\widehat{x}))$ où $\alpha=hr(\widehat{f}^i(\widehat{x}))$, $\beta=\|\phi(0)\| \leq h r(\widehat{f}^i(\widehat{x}))$ et $\mathrm{Lip}( \phi) \leq \gamma_0$. \\
Alors l'image par $g_{\widehat{f}^i(\widehat{x})}$ de ce graphe est un graphe $(X,\psi(X))$ au-dessus d'une partie de $C_{\gamma}^{-1}(\widehat{f}^{i+1}(\widehat{x}))E_u(\widehat{f}^{i+1}(\widehat{x}))$ au moins pour $\| X\|\leq hr(\widehat{f}^{i+1}(\widehat{x}))e^{\gamma}$ et $\| \psi(0)\|\leq e^{-2\gamma}\beta$, $\mathrm{Lip}( \psi) \leq \gamma_0e^{-\gamma}$. 
\end{Lem}

\begin{proof}

Si $\chi_{m_0 + 1}= - \infty$, les calculs qui vont suivre restent valables en remplaçant $\chi_{m_0 + 1}$ par $- 5 \gamma$.

Dans le repère précédent $g_{\widehat{f}^{i}(\widehat{x})}$ s'écrit (voir le paragraphe \ref{Pesin}): 

$$g_{\widehat{f}^i(\widehat{x})}(X,Y)=(AX+R(X,Y),BY+U(X,Y))$$

avec $(A,B)=\mathrm{diag}(A_{\gamma}^1(\widehat{f}^i(\widehat{x})),\cdots ,A_{\gamma}^l(\widehat{f}^i(\widehat{x})))$ . On a $\| A^{-1}\| ^{-1}\geq e^{\chi_{m_0}-\gamma}$ et $\| B\| \leq e^{\chi_{m_0+1}+\gamma}$ et pour $\| (X,Y)\| \leq 5hr(\widehat{f}^i(\widehat{x}))$ 

$$\max(\| DR(X,Y)\|,\| DU(X,Y)\|)\leq \frac{1}{r(\widehat{f}^i(\widehat{x}))}\| (X,Y)\| \leq 5h.$$

On applique le théorème de la transformée de graphe en avant avec $R_0=5hr(\widehat{f}^i(\widehat{x}))$.

Tout d'abord, le graphe $(X,\phi(X))$ est bien dans $B_k(0,R_0)$ car
 
$$\| \phi(X)\|\leq \| \phi(0)\|+\gamma_0 \| X\| \leq hr(\widehat{f}^i(\widehat{x}))\gamma_0+h r(\widehat{f}^i(\widehat{x}))\leq hr(\widehat{f}^i(\widehat{x})) (\gamma_0 + 1) \leq 2h r(\widehat{f}^i(\widehat{x})).$$

Ensuite, pour les hypothèses du théorème on a d'une part

$$\delta \| A^{-1}\| (1+\gamma_0) \leq 5he^{-\chi_{m_0}+\gamma}\times 2 < 1$$

pour $h$ petit par rapport à $\gamma$ et d'autre part

$$\frac{\| B\|\gamma_0+\delta(1+\gamma_0)}{\| A^{-1}\|^{-1}-\delta(1+\gamma_0)} \leq  \frac{\gamma_0e^{\chi_{m_0+1}+\gamma}+6h}{e^{\chi_{m_0}-\gamma}-6h} \leq e^{-\gamma}\gamma_0$$

toujours pour $h$ petit car $4\gamma<\chi_{m_0}-\chi_{m_0 +1}$.

Le théorème permet alors d'affirmer que l'image par $g_{\widehat{f}^i(\widehat{x})}$ du graphe de $\phi$ est un graphe $(X,\psi(X))$ au-dessus d'une partie de $C_{\gamma}^{-1}(\widehat{f}^{i+1}(\widehat{x}))E_u(\widehat{f}^{i+1}(\widehat{x}))$ qui contient $B_{k_1}(0,t)$ avec : 

\begin{equation*}
\begin{split}
t & \geq (\| A^{-1}\|^{-1}-\delta(1+\gamma_0))\alpha -\delta \beta \geq (e^{\chi_{m_0}-\gamma}-6h)hr(\widehat{f}^i(\widehat{x}))-5h\times hr(\widehat{f}^i(\widehat{x})) \\
& \geq hr(\widehat{f}^i(\widehat{x}))(e^{\chi_{m_0}-\gamma}-6h-5h)  \geq hr(\widehat{f}^i(\widehat{x}))\underbrace{(e^{4\gamma}-11h)}_{>e^{2\gamma} 
\mbox{  si  } h<<\gamma} \mbox{  car si on veut  } e^{\chi_{m_0}-\gamma}\geq e^{4\gamma}.\\
\end{split}
\end{equation*}

Cette dernière quantité est supérieure à $h r(\widehat{f}^{i+1}(\widehat{x})) e^{\gamma}$ car $\frac{r(\widehat{f}^{i}(\widehat{x}))}{r(\widehat{f}^{i+1}(\widehat{x}))}\geq e^{-\gamma}$.

Enfin on a : 
\begin{equation*}
\begin{split}
\| \psi(0)\| & \leq (1+\gamma_0)(\| B\| \beta+\delta \beta +\| D^2g_{\widehat{f}^i(\widehat{x})}\|_{B(0,R_0)}\beta^2) \\
& \leq e^{\frac{\gamma}{2}}(e^{\chi_{m_0+1}+\gamma}+5h+\frac{1}{r(\widehat{f}^i(\widehat{x}))}hr(\widehat{f}^i(\widehat{x})))\beta \\
& \leq e^{\frac{\gamma}{2}}(e^{-4\gamma}+5h+h)\beta  \mbox{  car si on veut  } e^{\chi_{m_0+1}+\gamma} \leq e^{-4\gamma}\\
& \leq e^{-2\gamma}\beta  \mbox{  si  } h<<\gamma.
\end{split}
\end{equation*}

\noindent Cela achève la démonstration du lemme. 
\end{proof}

On applique maintenant ce lemme de la manière suivante. 

On  part d'un graphe $(X,\phi_0(X))$ dans le repère $C_{\gamma}^{-1}(\widehat{x})E_u(\widehat{x}) \bigoplus C_{\gamma}^{-1}(\widehat{x}) E_s(\widehat{x})$ avec $\phi_0(X)=0$ pour $\|X\| \leq h r(\widehat{x})$.

En poussant en avant par $g_{\widehat{x}}$ on obtient un graphe $(X,\phi_1(X))$ au moins pour $\| X\| \leq hr(\widehat{f}(\widehat{x}))e^{\gamma}$ avec $\| \phi_1(0)\|=0\leq hr(\widehat{f}(\widehat{x})) e^{-\gamma}$ et $\mathrm{Lip}( \phi_1) \leq \gamma_0e^{-\gamma}$. 

Dans ce graphe on ne garde que la partie pour $\| X\|\leq hr(\widehat{f}(\widehat{x}))$ (on fait un cut-off) et on recommence en prenant l'image par $g_{\widehat{f}(\widehat{x})}$. Après $m$ étapes (poussées successives par $g_{\widehat{x}},g_{\widehat{f}(\widehat{x})},\cdots,g_{\widehat{f}^{m-1}(\widehat{x})}$), on obtient un graphe $(X,\phi_m(X))$ dans le repère:

$$C_{\gamma}^{-1}(\widehat{f}^m(\widehat{x}))E_u(\widehat{f}^m(\widehat{x}))\bigoplus C_{\gamma}^{-1}(\widehat{f}^m(\widehat{x}))E_s(\widehat{f}^m(\widehat{x}))$$

au moins pour $\| X\|\leq hr(\widehat{f}^m(\widehat{x}))e^{\gamma}$ avec $\| \phi_m(0)\| =0\leq hr(\widehat{f}^m(\widehat{x}))e^{-\gamma}$ et $\mathrm{Lip}( \phi_m) \leq \gamma_0e^{-\gamma}$. 

On veut maintenant remettre le graphe $(X,\phi_m(X))$ obtenu dans le repère initial:

$$C_{\gamma}^{-1}(\widehat{x})E_u(\widehat{x})\bigoplus C_{\gamma}^{-1}(\widehat{x})E_s(\widehat{x})$$
 
afin de recommencer l'opération. 

\begin{Rem}
On va le faire en supposant seulement que $\| \phi_m(0)\|\leq h r(\widehat{f}^m(\widehat{x}))e^{-\gamma}$ (et non $\| \phi_m(0)\|=0$) car cela nous sera utile dans la suite. 
\end{Rem}

Cela revient à  prendre l'image de $(X,\phi_m(X))$ par l'application:

$$C(X,Y):=C_{\gamma}^{-1}(\widehat{x})\tau_x^{-1}\tau_{f^m(x)}C_{\gamma}(\widehat{f}^m(\widehat{x}))(X,Y).
$$

Par définition, $\Lambda_{\delta}$ ne contient pas des points de $\pi^{-1}(V_{\epsilon_0})$. En particulier, si
$\widehat{x},\widehat{y}\in\Lambda_{\delta}$ avec $\mathrm{dist}(\widehat{x},\widehat{y})<\eta<\epsilon_0$ on a $\mathrm{dist}(x,y)\leq \mathrm{dist}(\widehat{x},\widehat{y})<\epsilon_0$, c'est-à-dire que $x$ et $y$ sont dans une même carte.

On peut supposer que la famille $(\tau_x)_x$ est choisie de sorte que dans ce cas $\tau_x^{-1}\circ \tau_y$ soit juste une translation (on rappelle que $\tau_x$ est la composée $\psi\circ t_x$ où $\psi$ est une carte et $t_x$ une translation). On supposera que les $(\tau_x)_x$ vérifient cette propriété dans la suite. 

Comme $\mathrm{dist}(\widehat{x},\widehat{f}^m(\widehat{x}))<\eta<\epsilon_0$, on a $\tau_x=\psi\circ t_x$ et $\tau_{f^m(x)}=\psi\circ t_{f^m(x)}$.

On en déduit qu'il existe un vecteur $a_m$ tel que $\tau_x^{-1}\tau_{f^m(x)}(w)=w+a_m$, ce qui permet de simplifier l'expression de $C$ sous la forme: 

$$C(w)=g_1(w)+C_{\gamma}^{-1}(\widehat{x})a_m$$

où $g_1(w)=C_{\gamma}^{-1}(\widehat{x})C_{\gamma}(\widehat{f}^m(\widehat{x}))(w)$.

Commençons par prendre l'image du graphe $(X,\phi_m(X))$ par $g_1$. On a: 

\begin{equation*}
\begin{split}
g_1(w) & =C_{\gamma}^{-1}(\widehat{x})\circ C_{\gamma}(\widehat{x})w+C_{\gamma}^{-1}(\widehat{x})\circ (C_{\gamma}(\widehat{f}^m(\widehat{x}))-C_{\gamma}(\widehat{x}))w \\
 & =w+C_{\gamma}^{-1}(\widehat{x})\circ (C_{\gamma}(\widehat{f}^m(\widehat{x}))-C_{\gamma}(\widehat{x}))w \\
\end{split}
\end{equation*}

avec $\| C_{\gamma}^{-1}(\widehat{x})\circ (C_{\gamma}(\widehat{f}^m(\widehat{x}))-C_{\gamma}(\widehat{x}))\|\leq \frac{1}{r_0}\| C_{\gamma}(\widehat{f}^m(\widehat{x}))-C_{\gamma}(\widehat{x})\| $.

Par ailleurs $\widehat{y}\mapsto C_{\gamma}(\widehat{y})$ est continue sur le compact $\Lambda_{\delta}$, donc y est uniformément continue. En particulier, il existe une fonction $\displaystyle\epsilon(\eta)\underset{\eta\to 0}{\longrightarrow} 0$ telle que l'on ait l'implication suivante: 

$$\mathrm{dist}(\widehat{z},\widehat{y})\leq \eta \mbox{  avec  } \widehat{z},\widehat{y}\in \Lambda_{\delta} \Longrightarrow \frac{1}{r_0}\| C_{\gamma}(\widehat{z})-C_{\gamma}(\widehat{y})\| \leq \epsilon(\eta).$$

Pour $\eta$ petit par rapport à $\gamma$ et $\gamma_0$ l'image du graphe $(X,\phi_m(X))$ par $g_1$ va être un graphe $(X,\psi_m^1(X))$. En effet, $g_1$ s'écrit sous la forme $g_1(X,Y)=(AX+CY,BY+DX)$ avec $\| A^{-1}\|^{-1}\geq 1-\epsilon(\eta)$, $\| B\|\leq 1+\epsilon(\eta)$, $\| C\| \leq \epsilon(\eta)$ et $\| D\| \leq \epsilon(\eta)$. 

Si on reprend les notations du théorème de la transformée de graphe on a donc: 

$$\delta \| A^{-1}\|(1+\gamma_0)\leq \epsilon(\eta)\times \frac{1}{1-\epsilon(\eta)}\times 2\underbrace{<}_{\mbox{  pour  } \eta \mbox{  petit  }} 1 \mbox{   } \mbox{   } \mbox{   } \mbox{  et}$$ 

$$\frac{\| B\|\gamma_0 e^{-\gamma}+\delta (1+\gamma_0)}{\| A^{-1}\| ^{-1}-\delta (1+\gamma_0)}\leq \frac{(1+\epsilon(\eta))e^{-\gamma}\gamma_0+2\epsilon(\eta)}{1-\epsilon(\eta)-2\epsilon(\eta)}\underbrace{\leq}_{\mbox{  pour  } \eta<<\gamma,\gamma_0} \gamma_0.$$

Les hypothèses de ce théorème étant vérifiées, on obtient un graphe $(X,\psi_m^1(X))$ avec $\mathrm{Lip}( \psi_m^1) \leq \gamma_0$ au moins au-dessus de $B_{k_1}(0,t)$ avec: 

$$t \geq (1-\epsilon(\eta)-3 \epsilon(\eta))e^{\gamma}hr(\widehat{f}^m(\widehat{x})) = (1-4 \epsilon(\eta))e^{\gamma}hr(\widehat{f}^m(\widehat{x})).$$

Mais $r(\widehat{f}^m(\widehat{x}))\geq e^{-\frac{\gamma}{4}}r(\widehat{x})$ si $\eta$ est petit par rapport à  $\gamma$. 

En effet, $\widehat{x}\mapsto r(\widehat{x})$ est uniformément continue sur $\Lambda_{\delta}$ et on peut donc supposer que: 

$$\mathrm{dist}(\widehat{x},\widehat{y})< \eta \mbox{  avec  } \widehat{x},\widehat{y}\in \Lambda_{\delta} \Longrightarrow | r(\widehat{x})-r(\widehat{y})| \leq (1-e^{-\frac{\gamma}{4}})r_0$$

ce qui donne 

$$r(\widehat{y})\geq r(\widehat{x})-(1-e^{-\frac{\gamma}{4}})r_0\geq r(\widehat{x})-(1-e^{-\frac{\gamma}{4}})r(\widehat{x})=e^{-\frac{\gamma}{4}}r(\widehat{x}).$$

On a donc $t \geq (1-4 \epsilon(\eta))e^{\gamma}he^{-\frac{\gamma}{4}}r(\widehat{x}) > h e^{\frac{\gamma}{4}}r(\widehat{x})$ si $\eta<<1$.

Enfin comme $g_1$ est linéaire,

\begin{equation*}
\begin{split}
\| \psi_m^{1}(0)\| &  \leq (1+\gamma_0)(1+\epsilon(\eta)+\epsilon(\eta))he^{-\gamma}r(\widehat{f}^m(\widehat{x}))\\
 & \leq e^{\frac{\gamma}{4}}(1+2\epsilon(\eta))he^{-\gamma}r(\widehat{f}^m(\widehat{x}))  \leq  e^{-\frac{\gamma}{2}}(1+2\epsilon(\eta))  hr(\widehat{x})
\end{split}
\end{equation*}

car comme précédemment on a $r(\widehat{f}^m(\widehat{x}))\leq e^{\frac{\gamma}{4}}r(\widehat{x})$.

Ainsi, $\| \psi_m^{1}(0)\| \leq   e^{-\frac{\gamma}{4}}h r(\widehat{x})$ si $\eta<<\gamma$.

Il reste désormais à  translater ce graphe par $C_{\gamma}^{-1}(\widehat{x})a_m$ (on rappelle que $C(w)=g_1(w)+C_{\gamma}^{-1}(\widehat{x})a_m$). 

Le vecteur $C_{\gamma}^{-1}(\widehat{x})a_m$ s'écrit $(t_1,t_2)$ donc l'image de $(X,\psi_m^1(X))$ est:

$$(X',\psi_m(X')):=(X+t_1,\psi_m^1(X)+t_2)$$

où $\psi_m(X')=\psi_m^1(X)+t_2=\psi_m^1(X'-t_1)+t_2$. 

C'est donc un graphe au-dessus d'une partie de l'axe des abscisses $C_{\gamma}^{-1}(\widehat{x})E_u(\widehat{x})$. De plus, $\mathrm{Lip}( \psi_m)=\mathrm{Lip}( \psi_m^1) \leq \gamma_0$ et c'est un graphe au moins au-dessus de $B(0,hr(\widehat{x})e^{\frac{\gamma}{4}} -\| C_{\gamma}^{-1}(\widehat{x})\| \| a_m\|)$. 

Mais en utilisant que $a_m=\overrightarrow{\psi^{-1}(x)\psi^{-1}(f^m(x))}$, on a 

$$\| C_{\gamma}^{-1}(\widehat{x})\| \| a_m\| \leq \frac{1}{r_0}\| \psi^{-1}(f^m(x))-\psi^{-1}(x)\|\underbrace{\leq}_{\mbox{  si  } \eta <<h,r_0} \frac{h^2 r_0^2}{r_0}=h^2r_0 \leq h^2r(\widehat{x}).$$

D'où $hr(\widehat{x})e^{\frac{\gamma}{4}}-\| C_{\gamma}^{-1}(\widehat{x})\| \| a_m\|\geq hr(\widehat{x})(e^{\frac{\gamma}{4}}-h) > hr(\widehat{x})$ si $h<<\gamma$. 

Enfin:

\begin{equation*}
\begin{split}
\| \psi_m(0)\| & = \| \psi_m^1(-t_1)+t_2\| \leq \| \psi_m^1(-t_1)-\psi_m^1(0)\|+\| \psi_m^1(0)\|+\| t_2\|\\
& \leq \gamma_0\| -t_1\| +\| t_2\| +e^{-\frac{\gamma}{4}}hr(\widehat{x}) \leq  (1+\gamma_0)\| C_{\gamma}^{-1}(\widehat{x})\| \| a_m\|+e^{-\frac{\gamma}{4}}hr(\widehat{x}) \\
&  \leq e^{\frac{\gamma}{4}}h^2 r(\widehat{x})+e^{-\frac{\gamma}{4}}hr(\widehat{x})  \leq hr(\widehat{x})(h e^{\frac{\gamma}{4}}+e^{-\frac{\gamma}{4}})  \leq hr(\widehat{x}) \mbox{  } \mbox{  pour  } h<<\gamma. \\
\end{split}
\end{equation*}

On a donc bien récupéré un graphe avec les bonnes estimées pour pouvoir recommencer tout le processus (voir le lemme précédent) avec $(X,\psi_m(X))$ à  la place de $(X,\phi_0(X))$. 

On notera $B_0$ le graphe initial $(X,\phi_0(X))$, $B_1$ le graphe obtenu $(X,\psi_m(X))$ et ainsi de suite.
On a ainsi construit une suite $(B_j)_j$ de graphes.

On va maintenant faire la même chose en tirant en arrière. 

\bigskip

{\bf{2) Tirés en arrière de graphes à  partir de $f^m(x)$ par $g_{\widehat{f}^{m-1}(\widehat{x})}, \cdots , g_{\widehat{x}}$: }}

Par hypothèse $\widehat{f}^m(\widehat{x})\in \Lambda_{\delta}$. Pour le lemme qui suit, on se place dans le repère: 

$$C_{\gamma}^{-1}(\widehat{f}^{i+1}(\widehat{x}))E_u(\widehat{f}^{i+1}(\widehat{x}))\bigoplus C_{\gamma}^{-1}(\widehat{f}^{i+1}(\widehat{x}))E_s(\widehat{f}^{i+1}(\widehat{x}))$$

(pour $i$ compris entre $0$ et $m-1$): 

\begin{Lem}{\label{lemme2}}
Soit $(\psi(Y),Y)$ un graphe au-dessus de $B_{k_2}(0,\alpha)\subset C_{\gamma}^{-1}(\widehat{f}^{i+1}(\widehat{x}))E_s(\widehat{f}^{i+1}(\widehat{x}))$ où $\alpha=hr(\widehat{f}^{i+1}(\widehat{x}))$, $\beta=\|\psi(0)\|\leq h r(\widehat{f}^{i+1}(\widehat{x}))$, $\mathrm{Lip}( \psi) \leq\gamma_0$. \\
Alors il existe un graphe $(\phi(Y),Y)$ au-dessus d'une partie de $C_{\gamma}^{-1}(\widehat{f}^{i}(\widehat{x}))E_s(\widehat{f}^{i}(\widehat{x}))$ pour au moins $\| Y\|\leq e^{2\gamma}\alpha$ avec $g_{\widehat{f}^{i}(\widehat{x})}(\mathrm{graphe}\ \phi)\subset \mathrm{graphe}\ \psi$, $\| \phi(0)\|\leq e^{-2\gamma}\beta$ et $\mathrm{Lip}( \phi ) \leq e^{-\gamma}\gamma_0$. 
\end{Lem}
\begin{proof}

Si $\chi_{m_0 + 1}= - \infty$, les calculs qui vont suivre restent valables en remplaçant $\chi_{m_0 + 1}$ par $- 5 \gamma$. Dans le repère:

$$C_{\gamma}^{-1}(\widehat{f}^{i}(\widehat{x}))E_u(\widehat{f}^{i}(\widehat{x}))\bigoplus C_{\gamma}^{-1}(\widehat{f}^{i}(\widehat{x}))E_s(\widehat{f}^{i}(\widehat{x}))$$

$g_{\widehat{f}^i(\widehat{x})}$ s'écrit:

$$g_{\widehat{f}^i(\widehat{x})}(X,Y)=(AX+R(X,Y),BY+U(X,Y))$$

où $(A,B)=\mathrm{diag}(A_{\gamma}^1(\widehat{f}^i(\widehat{x})),\cdots ,A_{\gamma}^l(\widehat{f}^i(\widehat{x})))$, $\| A^{-1}\| ^{-1}\geq e^{\chi_{m_0}-\gamma}$ et $\| B\| \leq e^{\chi_{m_0+1}+\gamma}$.

Comme dans la démonstration du lemme précédent, on a pour $\| (X,Y)\|\leq 5hr(\widehat{f}^{i}(\widehat{x}))=R_0$:

$$\max(\| DR(X,Y)\|,\| DU(X,Y)\|)\leq 5h.$$

On a aussi: 

$$\beta\leq \alpha\leq e^{\gamma}hr(\widehat{f}^i(\widehat{x}))\leq \frac{R_0}{4}.$$

La deuxième inégalité vient du fait que $r(\widehat{f}^{i+1}(\widehat{x}))\leq e^{\gamma}r(\widehat{f}^i(\widehat{x}))$. La dernière découle de $4e^{\gamma}\leq 5$ (qui est vraie pour $\gamma$ suffisamment petit). 

On peut alors appliquer le théorème de la transformée de graphe en arrière. En effet, on a:

$$ \gamma_0(1-\xi)+2\delta (1+\gamma_0)\| A^{-1}\| \leq \gamma_0+6h\times 2\times e^{-\chi_{m_0}+\gamma}\leq \gamma_0+12h<1$$ 

car $\gamma_0<\frac{1}{5}$ et $h$ petit. Ensuite,

$$ \frac{\gamma_0\| B\| +\delta(1+\gamma_0)}{\| A^{-1}\| ^{-1}-\delta(1+\gamma_0)}\leq \frac{\gamma_0 e^{\chi_{m_0+1}+\gamma}+6h}{e^{\chi_{m_0}-\gamma}-6h}<e^{-\gamma}\gamma_0$$ 

si $h<<\gamma,\gamma_0$ car $e^{\chi_{m_0+1}+\gamma-\chi_{m_0}+\gamma}\leq e^{-2\gamma}$. Puis,

$$ (\| B\| +2\delta )e^{2\gamma}+\delta\leq (e^{\chi_{m_0+1}+\gamma}+10h) e^{2 \gamma}+5h<1$$

pour $h<<\gamma$ car $e^{\chi_{m_0+1}+\gamma}\leq e^{-4\gamma}$. Enfin, d'une part,

$$\delta(1+\gamma_0)\leq 6h<\frac{\| A^{-1}\|^{-1}-\gamma_0\| B\|}{2}$$
car $\| A^{-1}\|^{-1}-\gamma_0\| B\|\geq e^{\chi_{m_0}-\gamma}-\gamma_0 e^{\chi_{m_0+1}+\gamma}\geq e^{4\gamma}-\gamma_0\geq e^{2\gamma} > 12h$ par l'hypothèse faite sur $\gamma_0$ au début et le fait que $e^{2\gamma}>12h$ pour $h$ petit. D'autre part, 

$$\| A^{-1}\| ^{-1}-e^{2\gamma}\geq e^{\chi_{m_0}-\gamma}-e^{2\gamma}\geq e^{4\gamma}-e^{2\gamma}>6h$$ 

aussi. Ce qui achève la démonstration du lemme. 
\end{proof}

On va maintenant appliquer ce lemme. On commence par se placer dans le repère 

$$C_{\gamma}^{-1}(\widehat{f}^m(\widehat{x}))E_u(\widehat{f}^m(\widehat{x}))\bigoplus C_{\gamma}^{-1}(\widehat{f}^m(\widehat{x}))E_s(\widehat{f}^m(\widehat{x}))$$

et on part de $(\psi_m(Y),Y)$ avec $\psi_m \equiv 0$ et $Y\in B_{k_2}(0,hr(\widehat{f}^m(\widehat{x})))$. 

Par le lemme précédent, on a l'existence d'un graphe $(\psi_{m-1}(Y),Y)$ avec 

$$g_{\widehat{f}^{m-1}(\widehat{x})}(\mathrm{graphe}\ \psi_{m-1})\subset \mathrm{graphe}\ \psi_{m},$$

$\mathrm{Lip}( \psi_{m-1}) \leq \gamma_0e^{-\gamma}$ et ce graphe est défini au moins pour $\| Y\|\leq e^{2\gamma}hr(\widehat{f}^m(\widehat{x}))$ c'est-à-dire au moins pour $\| Y\|\leq e^{\gamma}hr(\widehat{f}^{m-1}(\widehat{x}))$. De plus $\| \psi_{m-1}(0)\|=0\leq e^{-2\gamma}h r(\widehat{f}^m(\widehat{x}))\leq e^{-\gamma}h r(\widehat{f}^{m-1}(\widehat{x}))$. 

Dans ce graphe on ne garde que la partie pour $\| Y\|\leq hr(\widehat{f}^{m-1}(\widehat{x}))$ (on fait un cut-off) et on recommence avec $g_{\widehat{f}^{m-2}(\widehat{x})}$. 

Après $m$ étapes (tirés en arrière successifs par $g_{\widehat{f}^{m-1}(\widehat{x})}, g_{\widehat{f}^{m-2}(\widehat{x})}, \cdots ,g_{\widehat{x}}$), on obtient un graphe $(\psi_0(Y),Y)$ dans  le repère 

$$C_{\gamma}^{-1}(\widehat{x})E_u(\widehat{x})\bigoplus C_{\gamma}^{-1}(\widehat{x})E_s(\widehat{x})$$

au moins pour $\| Y\|\leq e^{2\gamma}hr(\widehat{f}(\widehat{x}))$ (donc au moins pour $\| Y\|\leq e^{\gamma}hr(\widehat{x})$), $\| \psi_0(0)\|=0\leq e^{-2\gamma}h r(\widehat{f}(\widehat{x}))\leq e^{-\gamma}h r(\widehat{x})$ et $\mathrm{Lip}( \psi_0) \leq e^{-\gamma}\gamma_0$.

On veut maintenant remettre ce graphe dans le repère initial 

$$C_{\gamma}^{-1}(\widehat{f}^m(\widehat{x}))E_u(\widehat{f}^m(\widehat{x}))\bigoplus C_{\gamma}^{-1}(\widehat{f}^m(\widehat{x}))E_s(\widehat{f}^m(\widehat{x}))$$

afin de recommencer l'opération.

On va le faire en supposant seulement $\| \psi_0(0)\|\leq e^{-\gamma}h r(\widehat{x})$ et non pas $0$ car cela nous servira pour les étapes suivantes.

On prend l'image de $(\psi_0(Y),Y)$ par $C^{-1}$. Par les mêmes arguments que précédemment, l'image de $(\psi_0(Y),Y)$ est un graphe $(\phi_0(Y),Y)$ au moins pour $\| Y\|\leq hr(\widehat{f}^m(\widehat{x}))$, avec $\mathrm{Lip}( \phi_0) \leq \gamma_0$ et $\| \phi_0(0)\|\leq h r(\widehat{f}^m(\widehat{x}))$. 

Ce sont les bonnes estimées pour pouvoir recommencer le processus avec $(\phi_0(Y),Y)$ à  la place de $(\psi_m(Y),Y)$. 

On note $A_0$ le graphe initial $(\psi_m(Y),Y)$, $A_1$ celui obtenu $(\phi_0(Y),Y)$. On a donc construit de cette manière une suite de graphes $(A_l)_{l\geq 0}$.

Nous noterons aussi $A_l^0$ le graphe obtenu juste avant de prendre l'image par $C^{-1}$ et créer $A_{l+1}$. Le graphe $A_l^0$ est dans le repère $C_{\gamma}^{-1}(\widehat{x}) E_u(\widehat{x})\bigoplus C_{\gamma}^{-1}(\widehat{x}) E_s(\widehat{x})$. 

Les ensembles $B_j$ et $A_l^0$ se coupent en un seul point (voir la démonstration de la proposition S.3.7 dans \cite{KH} car $\gamma_0^2 < 1$). 

Pour $l,j \geq 0$, on pose: 

$$z_{l,j}=\tau_x\circ C_{\gamma}(\widehat{x})(B_j\cap A_{l}^0).$$

Le lemme suivant énonce des propriétés de la suite $(z_{l,j})_{l,j}$.

\begin{Lem}{\label{lemme3}}
La suite $(z_{l,j})_{l,j}$ vérifie: 
\begin{enumerate}
\item $f^m(z_{l,j})=z_{l-1,j+1}$ pour $l\geq 1,j\geq 0$.
\item $\| C_{\gamma}^{-1}(\widehat{x})\circ \tau_x^{-1}(z_{l_1,j})-C_{\gamma}^{-1}(\widehat{x})\circ \tau_x^{-1}(z_{l_2,j})\|\leq e^{-\gamma}\| C_{\gamma}^{-1}(\widehat{x})\circ \tau_x^{-1}(z_{l_1-1,j+1})-C_{\gamma}^{-1}(\widehat{x})\circ \tau_x^{-1}(z_{l_2-1,j+1})\|$ pour $l_1,l_2\geq 1$ et $j\geq 0$.
\item $\| C_{\gamma}^{-1}(\widehat{x})\circ \tau_x^{-1}(z_{l,j_1})-C_{\gamma}^{-1}(\widehat{x})\circ \tau_x^{-1}(z_{l,j_2})\|\geq e^{\gamma}\| C_{\gamma}^{-1}(\widehat{x})\circ \tau_x^{-1}(z_{l-1,j_1+1})-C_{\gamma}^{-1}(\widehat{x})\circ \tau_x^{-1}(z_{l-1,j_2+1})\|$ pour $j_1,j_2\geq 0$ et $l\geq 1$.
\end{enumerate}
\end{Lem}

\begin{proof}
Par construction des $A_l^0$, on a pour $l\geq 1$:

$$g_{\widehat{f}^{m-1}(\widehat{x})}\circ \cdots \circ g_{\widehat{x}}(A^0_l)\subset A_l \subset B_{k_1}(0,h e^{\gamma} r(\widehat{f}^{m}(\widehat{x})))\times B_{k_2}(0,h r(\widehat{f}^{m}(\widehat{x})))$$
 
et pour $i=0,\cdots,m-1$:

$$g_{\widehat{f}^{i-1}(\widehat{x})}\circ \cdots \circ g_{\widehat{x}}(A^0_l)\subset B_{k_1}(0,hr(\widehat{f}^{i}(\widehat{x})))\times B_{k_2}(0,h e^{\gamma} r(\widehat{f}^{i}(\widehat{x})))$$

(car $\| \psi_i(Y)\| \leq \| \psi_i(Y)-\psi_i(0)\|+\| \psi_i(0)\|\leq \gamma_0 e^{- \gamma} e^{\gamma} h  r(\widehat{f}^{i}(\widehat{x}))+h e^{- \gamma} r(\widehat{f}^{i}(\widehat{x}))\leq hr(\widehat{f}^{i}(\widehat{x}))$). 

Les abscisses de ces ensembles sont donc dans la partie que l'on garde quand on fait le cut-off pour le poussé en avant, c'est-à-dire dans la construction des $B_j$. 

En particulier, $C_{\gamma}(\widehat{x})^{-1}\tau_x^{-1}\tau_{f^m(x)}C_{\gamma}(\widehat{f}^m(\widehat{x}))(g_{\widehat{f}^{m-1}(\widehat{x})}\circ \cdots \circ g_{\widehat{x}}(C_{\gamma}^{-1}(\widehat{x})\tau_x^{-1}z_{l,j}))$ est dans $A_{l-1}^0\cap B_{j+1}$. D'où 

$$\{C_{\gamma}(\widehat{x})^{-1}\tau_x^{-1}f^m(z_{l,j})\}=A_{l-1}^0\cap B_{j+1}=\{C_{\gamma}^{-1}(\widehat{x})\tau_x^{-1}z_{l-1,j+1}\}.$$

Donc $f^m(z_{l,j})=\tau_x\circ C_{\gamma}(\widehat{x})C_{\gamma}^{-1}(\widehat{x})\tau_x^{-1}z_{l-1,j+1}=z_{l-1,j+1}$. 

Ce qui achève la démonstration du point 1. 

Passons aux points 2 et 3 du lemme. Leurs démonstrations utiliseront les deux lemmes suivants: 

\begin{Lem}{\label{lemme4}}
Notons $B_j^i$ pour $i=0,\cdots, m$ les étapes intermédiaires (après cut-off) entre $B_j$ et $B_{j+1}$ lorsque l'on fait les cut-off successifs. En particulier, pour $i=1, \cdots m-1$, $B_j^i$ est un graphe $(X,\psi(X))$ dans le repère 

$$C_{\gamma}^{-1}(\widehat{f}^{i}(\widehat{x}))E_u(\widehat{f}^{i}(\widehat{x}))\bigoplus C_{\gamma}^{-1}(\widehat{f}^{i}(\widehat{x}))E_s(\widehat{f}^{i}(\widehat{x}))$$

au moins au-dessus de $B_{k_1}(0,hr(\widehat{f}^{i}(\widehat{x})))\subset C_{\gamma}^{-1}(\widehat{f}^{i}(\widehat{x}))E_u(\widehat{f}^{i}(\widehat{x}))$, $\mathrm{Lip}( \psi) \leq \gamma_0 e^{-\gamma}$ et $\| \psi(0)\|\leq h r(\widehat{f}^{i}(\widehat{x}))e^{-\gamma}$ (pour $i=0$ ce sont les mêmes estimées sans les $e^{-\gamma}$ et pour $i=m$ c'est $B_{k_1}(0,hr(\widehat{f}^m(\widehat{x}))e^{\gamma})$ au lieu de $B_{k_1}(0,hr(\widehat{f}^m(\widehat{x})))$).

$B_j^0=B_j$ et $C_{\gamma}^{-1}(\widehat{x})\tau_x^{-1}\tau_{f^m(x)}C_{\gamma}(\widehat{f}^m(\widehat{x}))B_j^m \supset B_{j+1}$ (cut-off). 

Alors si $z_1,z_2\in B_j^{i}$ $(i=0,\cdots ,m-1)$ on a:

$$\| g_{\widehat{f}^{i}(\widehat{x})}(z_1)-g_{\widehat{f}^{i}(\widehat{x})}(z_2)\|\geq e^{2\gamma}\| z_1-z_2\|.$$

\end{Lem}

\begin{proof}

On a vu au précédemment (voir lemme \ref{lemme1}) que $g_{\widehat{f}^{i}(\widehat{x})}$ s'écrit: 

$$g_{\widehat{f}^i(\widehat{x})}(X,Y)=(AX+R(X,Y),BY+U(X,Y))$$

où $(A,B)=\mathrm{diag}(A_{\gamma}^1(\widehat{f}^i(\widehat{x})),\cdots ,A_{\gamma}^l(\widehat{f}^i(\widehat{x})))$, $\| A^{-1}\| ^{-1}\geq e^{\chi_{m_0}-\gamma}$, $\| B\| \leq e^{\chi_{m_0+1}+\gamma}$ et $\max(\| DR(X,Y)\|,\| DU(X,Y)\|)\leq 5h$ pour $\| (X,Y)\| \leq 5hr(\widehat{f}^i(\widehat{x}))$.

\noindent Notons $(X,\phi(X))$ le graphe qui représente $B_j^{i}$. On a $z_1=(X_1,\phi(X_1))$ et $z_2=(X_2,\phi(X_2))$.

D'où $\| g_{\widehat{f}^{i}(\widehat{x})}(X_1,\phi(X_1))-g_{\widehat{f}^{i}(\widehat{x})}(X_2,\phi(X_2))\|$ est supérieur à

\begin{equation*}
\begin{split}
& \| A(X_1-X_2)\| -\| R(X_1,\phi(X_1)) - R(X_2,\phi(X_2)) \|  \\
& - \| U(X_1,\phi(X_1)) - U(X_2,\phi(X_2)) \|-\| B\| \| \phi(X_1)-\phi(X_2)\| \\
& \geq \| A^{-1}\|^{-1}\| X_1-X_2\| -5h (1+\gamma_0)\| X_1-X_2\|-5h(1+\gamma_0)\| X_1-X_2\|\\
& -\gamma_0e^{\chi_{m_0+1}+\gamma}\| X_1-X_2\|  \geq (e^{\chi_{m_0}-\gamma}-12h-\gamma_0e^{\chi_{m_0+1}+\gamma})\| X_1 - X_2\|\\
\end{split}
\end{equation*}

(on a utilisé que $\|AX\| \geq \|A^{-1}\|^{-1} \|X\|$). Or:
 
$$e^{\chi_{m_0}-\gamma}-\gamma_0e^{\chi_{m_0+1}+\gamma}\geq e^{4\gamma}-\gamma_0e^{-4\gamma}\underbrace{\geq}_{\gamma_0<<\gamma} e^{3\gamma},$$

donc pour $h<<\gamma,\gamma_0$ on a: 

$$e^{\chi_{m_0}-\gamma}-\gamma_0e^{\chi_{m_0+ 1}+\gamma}-12h\geq e^{3\gamma}-12h\geq e^{\frac{5\gamma}{2}}.$$

De plus, $\| z_1-z_2\| \leq (1+\gamma_0)\| X_1 - X_2\|\leq e^{\frac{\gamma}{4}}\| X_1-X_2\|$ ce qui donne:

$$\| g_{\widehat{f}^{i}(\widehat{x})}(X_1,\phi(X_1))-g_{\widehat{f}^{i}(\widehat{x})}(X_2,\phi(X_2))\|\geq e^{\frac{5\gamma}{2}}\| X_1-X_2\| \geq e^{2\gamma} \| z_1-z_2\|.$$

Cela achève la démonstration.
\end{proof}

\begin{Lem}{\label{lemme5}}
Notons $A_l^i$ pour $i=0,\cdots , m$ les étapes intermédiaires (après cut-off) entre $A_l$ et $A_{l+1}$ lorsque l'on fait les cut-off successifs. En particulier, pour $i=1, \cdots, m-1$, $A_l^i$ est un graphe $(\phi(Y),Y)$ dans le repère 

$$C_{\gamma}^{-1}(\widehat{f}^{i}(\widehat{x}))E_u(\widehat{f}^{i}(\widehat{x}))\bigoplus C_{\gamma}^{-1}(\widehat{f}^{i}(\widehat{x}))E_s(\widehat{f}^{i}(\widehat{x}))$$

 au moins au-dessus de $B_{k_2}(0,hr(\widehat{f}^{i}(\widehat{x})))\subset C_{\gamma}^{-1}(\widehat{f}^{i}(\widehat{x}))E_s(\widehat{f}^{i}(\widehat{x}))$ avec $\mathrm{Lip}( \phi ) \leq \gamma_0 e^{-\gamma}$ et $\| \phi(0)\|\leq h r(\widehat{f}^{i}(\widehat{x}))e^{-\gamma}$ (pour $i=m$ ce sont les mêmes estimées sans les $e^{-\gamma}$ et pour $i=0$ c'est $B_{k_2}(0,hr(\widehat{x})e^{\gamma})$ au lieu de $B_{k_2}(0,hr(\widehat{x}))$). 

$A_l^0$ est le même que défini précédemment et $A_{l}^{m}=A_{l}$

Alors si $z_1,z_2\in A_l^{i}$ $(i=0,\cdots ,m-1)$ on a: 

$$\| g_{\widehat{f}^{i}(\widehat{x})}(z_1)-g_{\widehat{f}^{i}(\widehat{x})}(z_2)\|\leq e^{-2\gamma}\| z_1-z_2\|.$$
\end{Lem}

\begin{proof}

On reprend les mêmes notations que dans le lemme précédent. 

Notons $(\phi(Y),Y)$ le graphe qui représente $A_l^{i}$. On a $z_1=(\phi(Y_1),Y_1)$ et $z_2=(\phi(Y_2),Y_2)$. D'où: 

\begin{equation*}
\begin{split}
& \| g_{\widehat{f}^{i}(\widehat{x})}(\phi(Y_1),Y_1)-g_{\widehat{f}^{i}(\widehat{x})}(\phi(Y_2),Y_2)\| \\
& \leq \| A\| \| \phi(Y_1)-\phi(Y_2)\| +\| R(\phi(Y_1),Y_1) - R(\phi(Y_2),Y_2) \|  \\
 & +\| U(\phi(Y_1),Y_1) - U(\phi(Y_2),Y_2) \|+\| B\| \| Y_1-Y_2\|\\
& \leq e^{\chi_1+\gamma}\gamma_0\| Y_1-Y_2\| +5h (1+\gamma_0)\| Y_1-Y_2\|+5h(1+\gamma_0)\| Y_1-Y_2\|\\
 & +e^{\chi_{m_0+1}+\gamma}\| Y_1-Y_2\| \leq (\gamma_0e^{\chi_{1}+\gamma}+12h+e^{\chi_{m_0 +1}+\gamma})\| Y_1 - Y_2\|.\\
\end{split}
\end{equation*}

Or: 

$$ \gamma_0e^{\chi_{1}+\gamma}+e^{\chi_{m_0+1}+\gamma}\leq \gamma_0e^{\chi_{1}+\gamma}+ e^{-4\gamma}\underbrace{\leq}_{\gamma_0<<\gamma} e^{-\frac{7\gamma}{2}}.$$

Donc pour $h<<\gamma,\gamma_0$, on a $\gamma_0e^{\chi_{1}+\gamma}+12h+e^{\chi_{m_0 + 1}+\gamma}\leq e^{-3\gamma}$.

De plus $\| z_1-z_2\| \geq (1-\gamma_0)\| Y_1 - Y_2\|\geq e^{-\gamma}\| Y_1-Y_2\|$ ce qui donne: 

$$\| g_{\widehat{f}^{i}(\widehat{x})}(\phi(Y_1),Y_1)-g_{\widehat{f}^{i}(\widehat{x})}(\phi(Y_2),Y_2)\|\leq e^{-3\gamma}\| Y_1-Y_2\| \leq e^{-2\gamma} \| z_1-z_2\|.$$

C'est ce que l'on voulait démontrer.

\end{proof}

Utilisons maintenant ces deux lemmes pour montrer les points 2 et 3 du lemme \ref{lemme3}.

\bigskip

{\bf{Démonstration du point 2:}}

On a vu que pour $i=0,\cdots ,m-1$, on a $g_{\widehat{f}^{i}(\widehat{x})}\circ \cdots \circ g_{\widehat{x}}(C_{\gamma}^{-1}(\widehat{x})\tau_x^{-1}(z_{l_1,j}))$ dans la partie que l'on garde quand on fait le cut-off pour construire $B_{j+1}$, c'est-à-dire qu'il est dans $B_j^{i+1}$. En particulier, par le lemme \ref{lemme4} on a: 

\begin{equation*}
\begin{split}
& \| g_{\widehat{f}^{m-1}(\widehat{x})}\circ \cdots \circ g_{\widehat{x}}(C_{\gamma}^{-1}(\widehat{x})\tau_x^{-1}z_{l_1,j})-g_{\widehat{f}^{m-1}(\widehat{x})}\circ \cdots \circ g_{\widehat{x}}(C_{\gamma}^{-1}(\widehat{x})\tau_x^{-1}z_{l_2,j})\| \\
& \geq e^{2\gamma m}\| C_{\gamma}^{-1}(\widehat{x})\tau_x^{-1}z_{l_1,j}-C_{\gamma}^{-1}(\widehat{x})\tau_x^{-1}z_{l_2,j}\|.\\
\end{split}
\end{equation*}

Or: 

$$\tau_{f^m(x)}\circ C_{\gamma}(\widehat{f}^m(\widehat{x}))\circ g_{\widehat{f}^{m-1}(\widehat{x})}\circ \cdots \circ g_{\widehat{x}}(C_{\gamma}^{-1}(\widehat{x})\tau_x^{-1}z_{l_1,j})=f^m(z_{l_1,j})=z_{l_1-1,j+1}$$

par le premier point du lemme \ref{lemme3}. 

Donc si on note $C=C_{\gamma}^{-1}(\widehat{x}) \tau_x^{-1}\tau_{f^m(x)}C_{\gamma}(\widehat{f}^m(\widehat{x}))$, on a: 

\begin{equation*}
\begin{split}
& \| C_{\gamma}^{-1}(\widehat{x})\tau_x^{-1}z_{l_1,j}-C_{\gamma}^{-1}(\widehat{x}) \tau_x^{-1}z_{l_2,j}\|\\
& \leq e^{-2\gamma m}\| C^{-1}(C\circ g_{\widehat{f}^{m-1}(\widehat{x})}\circ \cdots \circ g_{\widehat{x}}(C_{\gamma}^{-1}(\widehat{x})\tau_x^{-1}z_{l_1,j})) \\
& -C^{-1}(C\circ g_{\widehat{f}^{m-1}(\widehat{x})}\circ \cdots \circ g_{\widehat{x}}(C_{\gamma}^{-1}(\widehat{x})\tau_x^{-1}z_{l_2,j}))\|\\
& \leq   e^{-2\gamma m}  \| DC^{-1}\| \times \| C_{\gamma}^{-1}(\widehat{x}
)\tau_x^{-1}z_{l_1-1,j+1}-C_{\gamma}^{-1}(\widehat{x}
)\tau_x^{-1}z_{l_2-1,j+1}\|. \\
\end{split}
\end{equation*}

Or $C^{-1}(w)=C_{\gamma}^{-1}(\widehat{f}^m(\widehat{x}))C_{\gamma}(\widehat{x})w+C_{\gamma}^{-1}(\widehat{f}^m(\widehat{x}))b_m$, donc $DC^{-1}=C_{\gamma}^{-1}(\widehat{f}^m(\widehat{x}))C_{\gamma}(\widehat{x})$ ce qui implique $\| DC^{-1}\|\leq e^{\gamma}$ si $\eta<<\gamma$. 

On en déduit le point 2 du lemme \ref{lemme3}.

\bigskip

{\bf{Démonstration du point 3:}}

Pour $i=0,\cdots,m-1$, on a 

$g_{\widehat{f}^{i}(\widehat{x})}\circ \cdots \circ g_{\widehat{x}}(C_{\gamma}^{-1}(\widehat{x})\tau_x^{-1}z_{l,j_1})$ dans $A_{l}^{i+1}$ (par construction des $A_l^{i}$) d'où par le lemme \ref{lemme5}: 

\begin{equation*}
\begin{split}
& \| g_{\widehat{f}^{m-1}(\widehat{x})}\circ \cdots \circ g_{\widehat{x}}(C_{\gamma}^{-1}(\widehat{x})\tau_x^{-1}z_{l,j_1})-g_{\widehat{f}^{m-1}(\widehat{x})}\circ \cdots \circ g_{\widehat{x}}(C_{\gamma}^{-1}(\widehat{x})\tau_x^{-1}z_{l,j_2})\| \\
& \leq e^{-2\gamma m }\| C_{\gamma}^{-1}(\widehat{x})\tau_x^{-1}z_{l,j_1}-C_{\gamma}^{-1}(\widehat{x})\tau_x^{-1}z_{l,j_2}\|. \\
\end{split}
\end{equation*}

Or: 

$$\tau_{f^m(x)}\circ C_{\gamma}(\widehat{f}^m(\widehat{x}))\circ g_{\widehat{f}^{m-1}(\widehat{x})}\circ \cdots \circ g_{\widehat{x}}(C_{\gamma}^{-1}(\widehat{x})\tau_x^{-1}z_{l,j_1})=f^m(z_{l,j_1})=z_{l-1,j_1+1}.$$

D'où, si on note toujours $C=C_{\gamma}^{-1}(\widehat{x})\tau_x^{-1}\tau_{f^m(x)}C_{\gamma}(\widehat{f}^m(\widehat{x}))$, on a:

\begin{equation*}
\begin{split}
& \| C_{\gamma}^{-1}(\widehat{x})\tau_x^{-1}(z_{l-1,j_1+1})-C_{\gamma}^{-1}(\widehat{x})\tau_x^{-1}(z_{l-1,j_2+1})\| \\
& =  \| C_{\gamma}^{-1}(\widehat{x})\tau_x^{-1}f^m(z_{l,j_1})-C_{\gamma}^{-1}(\widehat{x})\tau_x^{-1}f^m(z_{l,j_2})\|\\
&=\| C\circ g_{\widehat{f}^{m-1}(\widehat{x})}\circ \cdots \circ g_{\widehat{x}}(C_{\gamma}^{-1}(\widehat{x})\tau_x^{-1}z_{l,j_1})-C\circ g_{\widehat{f}^{m-1}(\widehat{x})}\circ \cdots \circ g_{\widehat{x}}(C_{\gamma}^{-1}(\widehat{x})\tau_x^{-1}z_{l,j_2})\|\\
& \leq \| DC\| e^{-2\gamma m}\| C_{\gamma}^{-1}(\widehat{x})\tau_{x}^{-1}(z_{l,j_1})- C_{\gamma}^{-1}(\widehat{x})\tau_{x}^{-1}(z_{l,j_2})\|. \\
\end{split}
\end{equation*}

Comme précédemment on a $DC(w)=C_{\gamma}^{-1}(\widehat{x})C_{\gamma}(\widehat{f}^m(\widehat{x}))w$ avec $\| DC\|\leq e^{\gamma}$ si $\eta<<\gamma$, d'où le point 3 du lemme \ref{lemme3}. 

Cela termine la démonstration du lemme \ref{lemme3}.

\end{proof}

On continue la démonstration de l'existence du point $z$ et on va utiliser le lemme \ref{lemme3} pour cela. 

L'idée est de démontrer que la suite $(z_{l+1,l})_l$ converge vers un point périodique de $f$. 

Par ce lemme \ref{lemme3}, on a 

\begin{equation*}
\begin{split}
& \| C_{\gamma}^{-1}(\widehat{x})\tau_x^{-1}z_{l+1,l}-C_{\gamma}^{-1}(\widehat{x})\tau_x^{-1}z_{l,l}\| \\
& \leq e^{-\gamma l}\| C_{\gamma}^{-1}(\widehat{x})\tau_x^{-1}z_{1,2l}-C_{\gamma}^{-1}(\widehat{x})\tau_x^{-1}z_{0,2l}\|  \leq  2hr(\widehat{x})e^{-\gamma l }\\  
\end{split}
\end{equation*}

car $C_{\gamma}^{-1}(\widehat{x})\tau_x^{-1}z_{1,2l}$ et $C_{\gamma}^{-1}(\widehat{x})\tau_x^{-1}z_{0,2l}$ sont dans $B_{2l}$.

De même: 

\begin{equation*}
\begin{split}
& \| C_{\gamma}^{-1}(\widehat{x})\tau_x^{-1}z_{l,l+1}-C_{\gamma}^{-1}(\widehat{x})\tau_x^{-1}z_{l,l}\|\\
& \leq  e^{-\gamma l}\| C_{\gamma}^{-1}(\widehat{x})\tau_x^{-1}z_{2l,1}-C_{\gamma}^{-1}(\widehat{x})\tau_x^{-1}z_{2l,0}\| \leq 2hr(\widehat{x})e^{-\gamma l}, \\
\end{split}
\end{equation*}

et pour $l\geq 1$,

\begin{equation*}
\begin{split}
& \| C_{\gamma}^{-1}(\widehat{x})\tau_x^{-1}z_{l,l-1}-C_{\gamma}^{-1}(\widehat{x})\tau_x^{-1}z_{l,l}\| \\
& \leq e^{-\gamma (l-1)}\| C_{\gamma}^{-1}(\widehat{x})\tau_x^{-1}z_{2l-1,0}-C_{\gamma}^{-1}(\widehat{x})\tau_x^{-1}z_{2l-1,1}\| \leq  2hr(\widehat{x})e^{-\gamma (l-1)}. \\
\end{split}
\end{equation*}

En combinant ces inégalités, on obtient: 

$$ \| C_{\gamma}^{-1}(\widehat{x})\tau_x^{-1}z_{l+1,l}-C_{\gamma}^{-1}(\widehat{x})\tau_x^{-1}z_{l,l-1}\|  \leq 4hr(\widehat{x})e^{-\gamma (l-1)}.$$

Cela implique que la suite $(C_{\gamma}^{-1}(\widehat{x})\tau_x^{-1}z_{l+1,l})_{l\geq 0}$ est une suite de Cauchy de $\mathbb{C}^k$. 

En effet si $q \geq l$ on a: 

\begin{equation*}
\begin{split}
 & \| C_{\gamma}^{-1}(\widehat{x})\tau_x^{-1}z_{q+1,q}-C_{\gamma}^{-1}(\widehat{x})\tau_x^{-1}z_{l+1,l}\| \\
& \leq \| C_{\gamma}^{-1}(\widehat{x})\tau_x^{-1}z_{q+1,q}-C_{\gamma}^{-1}(\widehat{x})\tau_x^{-1}z_{q,q-1}\|+ \cdots \\
&+\| C_{\gamma}^{-1}(\widehat{x})\tau_x^{-1}z_{l+2,l+1}-C_{\gamma}^{-1}(\widehat{x})\tau_x^{-1}z_{l+1,l}\| \leq \sum_{i\geq l} 4he^{-\gamma i}\underset{l\rightarrow +\infty}{\longrightarrow} 0. \\
\end{split}
\end{equation*}

La suite $(C_{\gamma}^{-1}(\widehat{x})\tau_x^{-1}z_{l+1,l})_l$ converge donc vers un point que l'on note $C_{\gamma}^{-1}(\widehat{x})\tau_x^{-1}z$. 

De plus, 

$$\| C_{\gamma}^{-1}(\widehat{x})\tau_x^{-1}z_{l+1,l}-C_{\gamma}^{-1}(\widehat{x})\tau_x^{-1}z_{l,l+1}\|\leq 2hr(\widehat{x})e^{-\gamma l}+2hr(\widehat{x})e^{-\gamma l}\underset{l\rightarrow +\infty}{\longrightarrow} 0$$ 

ce qui montre que $z_{l,l+1}\underset{l\rightarrow +\infty}{\longrightarrow} z$ aussi. 

Montrons que le point $z$ obtenu vérifie $f^m(z)=z$. 

Pour $i=0,\cdots , m-1$ le point $g_{\widehat{f}^{i-1}(\widehat{x})}\circ \cdots \circ g_{\widehat{x}}(C_{\gamma}^{-1}(\widehat{x})\tau_x^{-1}z_{l+1,l})$ est dans l'ensemble $B_{k_1}(0,hr(\widehat{f}^{i}(\widehat{x})))\times B_{k_2}(0,e^{\gamma} hr(\widehat{f}^{i}(\widehat{x})))$ sur lequel $g_{\widehat{f}^{i}(\widehat{x})}$ est continue. Ainsi,

$$f^m(z_{l+1,l})=\tau_{f^m(x)}C_{\gamma}(\widehat{f}^m(\widehat{x}))g_{\widehat{f}^{m-1}(\widehat{x})}\circ \cdots \circ g_{\widehat{x}}C_{\gamma}^{-1}(\widehat{x})\tau_x^{-1}(z_{l+1,l})\underset{l\rightarrow +\infty}{\longrightarrow} f^m(z).$$ 

Comme $f^m(z_{l+1,l})=z_{l,l+1}\underset{l\rightarrow +\infty}{\longrightarrow} z$, on obtient bien que $f^m(z)=z$. 

Cela termine donc la démonstration du point $1$ du Closing Lemma, c'est-à-dire l'existence du point périodique $z$.  

\subsubsection{{\bf Démonstration du deuxième point du Closing Lemma:}}{\label{point2}}

Il s'agit d'estimer la distance entre $f^{i}(x)$ et $f^{i}(z)$ pour $i=0, \cdots ,m$. 

On se place dans le repère $C_{\gamma}^{-1}(\widehat{x})E_u(\widehat{x})\bigoplus C_{\gamma}^{-1}(\widehat{x})E_s(\widehat{x})$ et on note $y_l:=B_0\cap A_l^0$. 

Pour $0\leq i \leq m-1 $, on a:

\begin{equation*}
\begin{split}
& \| g_{\widehat{f}^{i}(\widehat{x})}\circ \cdots \circ g_{\widehat{x}}(C_{\gamma}^{-1}(\widehat{x})\tau_x^{-1}z_{l,l+1}) \| \\
& \leq \| g_{\widehat{f}^{i}(\widehat{x})}\circ \cdots \circ g_{\widehat{x}}(C_{\gamma}^{-1}(\widehat{x})\tau_x^{-1}z_{l,l+1})-g_{\widehat{f}^{i}(\widehat{x})}\circ \cdots \circ g_{\widehat{x}}(y_l)\|+ \| g_{\widehat{f}^{i}(\widehat{x})}\circ \cdots \circ g_{\widehat{x}}(y_l)\|.\\
\end{split}
\end{equation*}

Mais $g_{\widehat{f}^{i}(\widehat{x})}\circ \cdots \circ g_{\widehat{x}}(y_l)\in g_{\widehat{f}^{i}(\widehat{x})}\circ \cdots \circ g_{\widehat{x}}(A_l^0)$ est dans la partie que l'on garde quand on fait le cut-off pour construire $B_1$ (voir le début de la démonstration du lemme \ref{lemme3}), il est donc dans $B_0^{i+1}$. Par le lemme \ref{lemme4}, on a donc pour $i=0,\cdots,m-1$: 

\begin{equation*}
\begin{split}
& \| g_{\widehat{f}^{i}(\widehat{x})}\circ \cdots \circ g_{\widehat{x}}(y_l)\| = \| g_{\widehat{f}^{i}(\widehat{x})}\circ \cdots \circ g_{\widehat{x}}(0)-g_{\widehat{f}^{i}(\widehat{x})}\circ \cdots \circ g_{\widehat{x}}(y_l)\| \\
& \leq e^{-2\gamma (m-i-1)}\| g_{\widehat{f}^{m-1}(\widehat{x})}\circ \cdots \circ g_{\widehat{x}}(0)-g_{\widehat{f}^{m-1}(\widehat{x})}\circ \cdots \circ g_{\widehat{x}}(y_l)\| \\
& \leq e^{-2\gamma(m-i-1)}\times 2hr(\widehat{f}^m(\widehat{x})) \leq 2he^{-2\gamma(m-i-1)}. \\
\end{split}
\end{equation*}

Par ailleurs, par le lemme \ref{lemme5}, on a aussi pour $i=0,\cdots,m-1$: 

\begin{equation*}
\begin{split}
& \| g_{\widehat{f}^{i}(\widehat{x})}\circ \cdots \circ g_{\widehat{x}}(C_{\gamma}^{-1}(\widehat{x})\tau_x^{-1}z_{l,l+1})-g_{\widehat{f}^{i}(\widehat{x})}\circ \cdots \circ g_{\widehat{x}}(y_l)\|  \\
& \leq  e^{-2\gamma i }\| C_{\gamma}^{-1}(\widehat{x})\tau_x^{-1}z_{l,l+1}-y_l\| \leq 2hr(\widehat{x})e^{-2\gamma i} \leq 2he^{-2\gamma i} .\\
\end{split}
\end{equation*}

En combinant les deux inégalités, on en déduit que

\begin{equation*}
\begin{split}
 \| g_{\widehat{f}^{i}(\widehat{x})}\circ \cdots \circ g_{\widehat{x}}(C_{\gamma}^{-1}(\widehat{x})\tau_x^{-1}z_{l,l+1})\| & \leq  2he^{-2\gamma (m-i-1)}+2he^{-2\gamma i } \\
&\leq 4h \max(e^{-2\gamma i}, e^{-2\gamma (m-i-1)}). \\
\end{split}
\end{equation*}

On fait tendre $l$ vers l'infini, et par continuité (comme précédemment), on obtient: 

$$\|  g_{\widehat{f}^{i}(\widehat{x})}\circ \cdots \circ g_{\widehat{x}}(C_{\gamma}^{-1}(\widehat{x})\tau_x^{-1}z)\|\leq 4h\max(e^{-2\gamma i},e^{-2\gamma (m-i-1)}).$$

Maintenant comme $f^{i}(z) =\tau_{f^{i}(x)}\circ C_{\gamma}(\widehat{f}^{i}(\widehat{x}))\circ  g_{\widehat{f}^{i-1}(\widehat{x})}\circ \cdots \circ g_{\widehat{x}}(C_{\gamma}^{-1}(\widehat{x})\tau_x^{-1}z)$ et $f^{i}(x) = \tau_{f^{i}(x)}\circ C_{\gamma}(\widehat{f}^{i}(\widehat{x}))(0)$, par le théorème des accroissements finis, on a pour $i=0,\cdots,m$: 

$$\mathrm{dist}(f^{i}(x),f^{i}(z))\leq C(X)\| C_{\gamma}(\widehat{f}^{i}(\widehat{x}))\|\times \| g_{\widehat{f}^{i-1}(\widehat{x})}\circ \cdots \circ g_{\widehat{x}}(C_{\gamma}^{-1}(\widehat{x})\tau_x^{-1}z)\|. $$

Mais $\| C_{\gamma}(\widehat{f}^{i}(\widehat{x}))\|\leq e^{\gamma i}\| C_{\gamma}(\widehat{x}) \|\leq \frac{e^{\gamma i}}{r_0}$ car $\| C_{\gamma}(\widehat{x})\|$ est tempérée et $\widehat{x}\in \Lambda_{\delta}$. De même, comme $\widehat{f}^m(\widehat{x})\in \Lambda_{\delta}$, on a $\| C_{\gamma}(\widehat{f}^{i}(\widehat{x}))\|\leq e^{\gamma (m-i)}\| C_{\gamma}(\widehat{f}^m(\widehat{x})) \|\leq \frac{e^{\gamma (m-i)}}{r_0}$. 

On obtient donc, pour $i=0,\cdots,m$, 

\begin{equation*}
\begin{split}
 & \mathrm{dist}(f^{i}(x),f^{i}(z)) \leq C(X)\| C_{\gamma}(\widehat{f}^{i}(\widehat{x}))\| 4h\max(e^{-2\gamma (i-1)}, e^{-2\gamma (m-i)})\\
& \leq C(X)e^{2\gamma}\| C_{\gamma}(\widehat{f}^{i}(\widehat{x}))\| 4h\max(e^{-2\gamma i}, e^{-2\gamma (m-i)}) \leq \frac{C(X)e^{2\gamma}4h}{r_0}\max(e^{-\gamma i},e^{-\gamma(m-i)}). \\
\end{split}
\end{equation*}

Il suffit alors de prendre $h$ suffisamment petit pour que $\frac{C(X)4he^{2\gamma}}{r_0}<\epsilon$ et on obtient l'estimée que l'on voulait. 

Il reste le point $3$ du Closing Lemma à  montrer, c'est-à-dire que $z$ est hyperbolique.

\subsubsection{{\bf Démonstration de l'hyperbolicité de $z$}} 

Nous allons montrer que $Df^m(z)$ a $m_0$ valeurs propres de module supérieur à  $e^{\gamma}$ et $k-m_0$ valeurs propres de module inférieur à  $e^{-\gamma}$ (on compte avec multiplicité). 

Pour cela on va construire des variétés stables et instables en $z$ à  partir des suites $(B_j)_j$ et $(A_l)_l$ précédentes. Commençons par la variété instable. 

On se place dans le repère $C_{\gamma}^{-1}(\widehat{x})E_u(\widehat{x})\bigoplus C_{\gamma}^{-1}(\widehat{x})E_s(\widehat{x})$ et on note $G_h$ l'ensemble des graphes de fonctions holomorphes $(X,\psi(X))$ au-dessus de $\overline{B_{k_1}(0,hr(\widehat{x}))}\subset C_{\gamma}^{-1}(\widehat{x})E_u(\widehat{x})$ avec $\mathrm{Lip}( \psi) \leq \gamma_0$ et $\| \psi(0)\| \leq h r(\widehat{x})$. Les $B_j$ sont dans $G_h$ par construction. 

On munit $G_h$ de la métrique suivante: 

$$d(\mathrm{graphe} \mbox{    } \phi,\mathrm{graphe} \mbox{    } \psi)=\max_{t\in \overline{B_{k_1}(0,hr(\widehat{x}))}} \| \phi(t)-\psi(t)\|.$$

Par le théorème d'Ascoli, $(G_h,d)$ est relativement compact (on a  une famille équicontinue car $\mathrm{Lip}( \psi) \leq \gamma_0$ et $\| \psi(t)\|\leq \|\psi(0)\| + \| \psi(t)-\psi(0)\|\leq h r(\widehat{x})+\gamma_0\| t\|\leq hr(\widehat{x})(1 + \gamma_0)$). 

Une limite uniforme de fonctions holomorphes étant holomorphe, $(G_h,d)$ est même compact. On a alors: 

\begin{Lem}{\label{lemme6}}
$(B_j)_j$ est une suite de Cauchy de $(G_h,d)$. En particulier il existe $B\in G_h$ tel que $B_j\underset{j\to +\infty}{\longrightarrow} B$. 
\end{Lem}

\begin{proof}

Comme dans la fin de la preuve de l'existence de $z$, il suffit de démontrer que: 

$$\forall j\geq 0,\ \mathrm{d}(B_j,B_{j+1})\leq e^{-\gamma j}\mathrm{d}(B_0,B_1)\leq e^{-\gamma j}3hr(\widehat{x}).$$

Pour cela il est suffisant de voir que: 

$$\forall j\geq 0,\ \mathrm{d}(B_{j+1},B_{j+2})\leq e^{-\gamma}\mathrm{d}(B_{j},B_{j+1}).$$

Dans le repère $C_{\gamma}^{-1}(\widehat{f}^{i}(\widehat{x}))E_u(\widehat{f}^{i}(\widehat{x}))\bigoplus C_{\gamma}^{-1}(\widehat{f}^{i}(\widehat{x}))E_s(\widehat{f}^{i}(\widehat{x}))$, on considère pour $i=0,\cdots, m-1$, l'ensemble $G_h^i$ des graphes de fonctions holomorphes $(X,\psi(X))$ au-dessus de $\overline{B_{k_1}(0,hr(\widehat{f}^{i}(\widehat{x}))}$ avec $\mathrm{Lip}( \psi) \leq \gamma_0$ et $ \| \psi(0)\| \leq hr(\widehat{f}^{i}(\widehat{x})) $ muni de la distance $d_i$ définie par: 

$$d_i(\mathrm{graphe}\ \phi,\mathrm{graphe}\ \psi)=\max_{t\in \overline{B_{k_1}(0,hr(\widehat{f}^{i}(\widehat{x})))}} \| \phi(t)-\psi(t)\|.$$

On a $d_0=d$. On considère aussi $G_h^m$ l'ensemble des graphes de fonctions holomorphes $(X,\psi(X))$ au-dessus de $\overline{B_{k_1}(0,e^{\gamma}hr(\widehat{f}^{m}(\widehat{x}))}$ avec $\mathrm{Lip}( \psi) \leq \gamma_0$ et $\| \psi(0)\| \leq hr(\widehat{f}^{m}(\widehat{x}))$ muni de la distance $d_m$ définie par: 

$$d_m(\mathrm{graphe}\ \phi,\mathrm{graphe}\ \psi)=\max_{t\in \overline{B_{k_1}(0,e^{\gamma}hr(\widehat{f}^{m}(\widehat{x})))}} \| \phi(t)-\psi(t)\|.$$

Les $B_j^{i}$ $(i=0,\cdots,m)$ sont dans $G_h^{i}$. 

On a pour $i=0,\cdots,m-1$ et $j\geq 0$:

$$d_{i+1}(B_{j}^{i+1},B_{j+1}^{i+1})\leq e^{-2\gamma}\mathrm{d}_{i}(B_{j}^{i},B_{j+1}^{i}).$$

En effet $g_{\widehat{f}^{i}(\widehat{x})}(X,Y)=(AX+R(X,Y),BY+U(X,Y))$ avec $\| A^{-1}\|^{-1}\geq e^{\chi_{m_0}-\gamma}$, $\| B\|\leq e^{\chi_{m_0 + 1}+\gamma}$ et $\max(\| DR(X,Y)\|,\| DU(X,Y)\|)\leq 5h$ pour $\| (X,Y)\|\leq 5hr(\widehat{f}^{i}(\widehat{x}))$.

Donc si on note $(X,\phi_1(X))$ le graphe de $B_j^{i}$, $(X,\psi_1(X))$ le graphe image par $g_{\widehat{f}^{i}(\widehat{x})}$ qui contient $B_{j}^{i+1}$, $(X,\phi_2(X))$ le graphe de $B_{j+1}^{i}$ et $(X,\psi_2(X))$ le graphe image par $g_{\widehat{f}^{i}(\widehat{x})}$ qui contient $B_{j+1}^{i+1}$, on a: 

\begin{equation*}
\begin{split}
& \| \psi_1(AX+R(X,\phi_1(X)))-\psi_2(AX+R(X,\phi_1(X)))\|\\
& \leq  \| \psi_1(AX+R(X,\phi_1(X)))-\psi_2(AX+R(X,\phi_2(X)))\|\\
& +\| \psi_2(AX+R(X,\phi_2(X)))-\psi_2(AX+R(X,\phi_1(X)))\|.\\
\end{split}
\end{equation*}

Mais $g_{\widehat{f}^{i}(\widehat{x})}(X,\phi_1(X))=(AX+R(X,\phi_1(X)), B\phi_1(X)+U(X,\phi_1(X)))$

est un point de $(X,\psi_1(X))$ (et de même en remplaçant $\phi_1$ par $\phi_2$ et $\psi_1$ par $\psi_2$), d'où: 

\begin{equation*}
\begin{split}
& \| \psi_1(AX+R(X,\phi_1(X)))-\psi_2(AX+R(X,\phi_1(X)))\|\\
& \leq \| B\phi_1(X)+U(X,\phi_1(X))-B\phi_2(X)-U(X,\phi_2(X))\|\\
& +\gamma_0\| R(X,\phi_2(X))-R(X,\phi_1(X))\| \\
& \leq \| B\| \| \phi_1(X)-\phi_2(X)\|+5h\| \phi_1(X)-\phi_2(X)\|+\gamma_0 5h \| \phi_1(X)-\phi_2(X)\| \\
& \leq (e^{\chi_{m_0 +1}+\gamma}+5h(1+\gamma_0))\| \phi_1(X)-\phi_2(X)\| \leq e^{-2\gamma}\| \phi_1(X)-\phi_2(X)\|. \\
\end{split}
\end{equation*}

$B_j^{i+1}$ et $B_{j+1}^{i+1}$ sont des sous-parties de $(X,\psi_1(X))$ et $(X,\psi_2(X))$ d'où: 

$$\forall i=0,\cdots,m-1,\forall j\geq 0,\ d_{i+1}(B_j^{i+1},B_{j+1}^{i+1})\leq e^{-2\gamma}d_{i}(B_j^{i},B_{j+1}^{i}). 
$$

On obtient ainsi:

$$d_{m}(B_j^{m},B_{j+1}^{m})\leq e^{-2\gamma m}d_0(B_j^0,B_{j+1}^0)=e^{-2\gamma m}d(B_j,B_{j+1}).$$

Il faut maintenant passer de $B_j^m$ à  $B_{j+1}$. 

On a $B_{j+1}$ qui est le cut-off de $C_{\gamma}^{-1}(\widehat{x})\circ \tau_x^{-1}  \circ \tau_{f^m(x)}\circ C_{\gamma}(\widehat{f}^m(\widehat{x}))(B_j^m)$. 

On reprend les notations utilisées à  savoir $C=C_{\gamma}^{-1}(\widehat{x})\circ \tau_x^{-1} \circ \tau_{f^m(x)}\circ C_{\gamma}(\widehat{f}^m(\widehat{x}))$ qui s'écrit $C(w)=g_1(w)+C_{\gamma}^{-1}(\widehat{x})a_m$ avec $g_1(w)=C_{\gamma}^{-1}(\widehat{x})C_{\gamma}(\widehat{f}^m(\widehat{x}))w$.

On a $g_1(X,Y)=(AX+CY,BY+DX)$ avec $\| A^{-1}\|^{-1}\geq 1-\epsilon(\eta)$, $\| B\| \leq 1+\epsilon(\eta)$, $\| C\|,\| D\|\leq \epsilon(\eta)$. En notant $C_{\gamma}^{-1}(\widehat{x})a_m=(t_1,t_2)$, on peut écrire,

$$C(X,Y)=(t_1+AX+CY,t_2+BY+DX).$$

Si on reprend le calcul précédent avec $C$ à  la place de $g_{\widehat{f}^{i}(\widehat{x})}$, $(X,\phi_1(X))$ pour $B_j^m$, $(X,\phi_2(X))$ pour $B_{j+1}^m$ et $(X,\psi_1(X)),(X,\psi_2(X))$ les images respectives de ces graphes par C, on a: 

\begin{equation*}
\begin{split}
& \| \psi_1(t_1+AX+C\phi_1(X))-\psi_2(t_1+AX+C\phi_1(X))\| \\
& \leq \| \psi_1(t_1+AX+C\phi_1(X))-\psi_2(t_1+AX+C\phi_2(X))\| \\
& + \|\psi_2(t_1+AX+C\phi_2(X))-\psi_2(t_1+AX+C\phi_1(X))\| \\
& \leq  \| B\phi_1(X)+DX-DX-B \phi_2(X)+t_2-t_2\|+\gamma_0\| C \| \| \phi_1(X)-\phi_2(X)\| \\
& \leq (\| B\| +\gamma_0 \| C\|)\| \phi_1(X)-\phi_2(X)\| \leq  (1+\epsilon(\eta)+\gamma_0\epsilon(\eta))\| \phi_1(X)-\phi_2(X)\| \\
& \leq  e^{\gamma}\| \phi_1(X)-\phi_2(X)\| \mbox{   } \mbox{  pour  } \eta<<\gamma,\gamma_0. \\
\end{split}
\end{equation*}

Comme $B_{j+1}$ et $B_{j+2}$ sont inclus dans $(X,\psi_1(X))$ et $(X,\psi_2(X))$, on obtient: 

$$d(B_{j+1},B_{j+2})\leq e^{\gamma}d_m(B_{j}^m,B_{j+1}^m)\leq e^{\gamma}e^{-2\gamma m}d(B_{j},B_{j+1})\leq e^{-\gamma}d(B_{j},B_{j+1}).$$

C'est ce qu'on voulait démontrer. La suite $(B_j)_j$ converge donc vers $B\in G_h$. 

Remarquons aussi que de la même façon, les $B_j^{i}$ convergent quand $j\rightarrow +\infty$ vers $B^{i} \in G_h^{i}$ pour $i=0,\cdots,m$ (cela découle des inégalités que l'on vient de montrer). 

\end{proof}

Passons maintenant à  la variété stable. 

Dans le repère $C_{\gamma}^{-1}(\widehat{x})E_u(\widehat{x})\bigoplus C_{\gamma}^{-1}(\widehat{x})E_s(\widehat{x})$, on note $G_v$ l'ensemble des graphes de fonctions holomorphes $(\phi(Y),Y)$ au-dessus de $\overline{B_{k_2}(0,he^{\gamma}r(\widehat{x}))}\subset C_{\gamma}^{-1}(\widehat{x})E_s(\widehat{x})$ pour lesquels $\mathrm{Lip}( \phi) \leq \gamma_0$  et $\| \phi(0)\| \leq hr(\widehat{x})$. Les $A_l^{0}$ sont dans $G_v$. 

On munit $G_v$ de la métrique: 

$$\mathrm{d}(\mathrm{graphe} \mbox{    } \phi,\mathrm{graphe} \mbox{    } \psi)=\max_{t\in \overline{B_{k_2}(0,he^{\gamma}r(\widehat{x}))}} \| \phi(t)-\psi(t)\|.$$

Comme précédemment, par le théorème d'Ascoli, $(G_v,d)$ est compact. 

On a alors: 

\begin{Lem}{\label{lemme7}}
$(A_l^0)_l$ est une suite de Cauchy de $(G_v,d)$. En particulier il existe $A^0 \in G_v$ avec $A_l^0 \underset{l\to +\infty}{\longrightarrow} A^0$. 
\end{Lem}

\begin{proof}

On va montrer que: 

$$\forall l\geq 1,\ d(A_l^{0},A_{l+1}^0)\leq e^{- \gamma} d(A_{l-1}^0,A_{l}^0)$$

et cela impliquera le résultat comme au lemme précédent. 

Dans le repère $C_{\gamma}^{-1}(\widehat{f}^{i}(\widehat{x}))E_u(\widehat{f}^{i}(\widehat{x}))\bigoplus C_{\gamma}^{-1}(\widehat{f}^{i}(\widehat{x}))E_s(\widehat{f}^{i}(\widehat{x}))$, on considère, pour $i=1,\cdots,m$, l'ensemble $G_v^i$ des graphes de fonctions holomorphes $(\phi(Y),Y)$ au-dessus de $\overline{B_{k_2}(0,hr(\widehat{f}^{i}(\widehat{x}))}$ avec $\mathrm{Lip}( \phi) \leq \gamma_0$ et  $\| \phi(0)\| \leq hr(\widehat{f}^{i}(\widehat{x}))$ muni de la distance $d_i$ définie par: 

$$d_i(\mathrm{graphe} \mbox{    } \phi,\mathrm{graphe} \mbox{    } \psi)=\max_{t\in \overline{B_{k_2}(0,hr(\widehat{f}^{i}(\widehat{x})))}} \| \phi(t)-\psi(t)\|.$$

Les $A_l^{i}$ sont dans $G_v^{i}$ par construction pour $i=1,\cdots,m$. De plus, on a (en notant $d_0=d$)

$$\forall i=0,\cdots,m-1 \mbox{  et  } \forall l \geq 0,\ d_i(A_{l}^{i},A_{l+1}^{i})\leq e^{-2\gamma}\mathrm{d}_{i+1}(A_{l}^{i+1},A_{l+1}^{i+1}).$$

En effet comme dans le lemme précédent $g_{\widehat{f}^{i}(\widehat{x})}(X,Y)=(AX+R(X,Y),BY+U(X,Y))$ avec $\| A^{-1}\|^{-1}\geq e^{\chi_{m_0}-\gamma}$, $\| B\|\leq e^{\chi_{m_0 + 1}+\gamma}$ et $\max(\| DR(X,Y)\|,\| DU(X,Y)\|)\leq 5h$ pour $\| (X,Y)\|\leq 5hr(\widehat{f}^{i}(\widehat{x}))$.

Donc si on note $(\phi_1(Y),Y)$ le graphe de $A_l^{i}$, $(\psi_1(Y),Y)$ son graphe image par $g_{\widehat{f}^{i}(\widehat{x})}$ qui est inclus dans $A_{l}^{i+1}$, $(\phi_2(Y),Y)$ le graphe de $A_{l+1}^{i}$ et $(\psi_2(Y),Y)$ son graphe image par $g_{\widehat{f}^{i}(\widehat{x})}$ qui est inclus dans $A_{l+1}^{i+1}$, on a: 

\begin{equation*}
\begin{split}
& \| \psi_1(BY+U(\phi_1(Y),Y))-\psi_2(BY+U(\phi_1(Y),Y))\|\\
& \geq \| \psi_1(BY+U(\phi_1(Y),Y))-\psi_2(BY+U(\phi_2(Y),Y))\|\\
& -\| \psi_2(BY+U(\phi_2(Y),Y))-\psi_2(BY+U(\phi_1(Y),Y))\|. \\
\end{split}
\end{equation*}

$g_{\widehat{f}^{i}(\widehat{x})}(\phi_1(Y),Y)$ est un point de $(\psi_1(Y),Y)$ (et de même en remplaçant $\phi_1$ par $\phi_2$ et $\psi_1$ par $\psi_2$) d'où la quantité précédente est supérieure à

\begin{equation*}
\begin{split}
& \| A\phi_1(Y)+R(\phi_1(Y),Y)-A\phi_2(Y)-R(\phi_2(Y),Y)\| -\gamma_0 5h\| \phi_1(Y)-\phi_2(Y))\| \\
& \geq  \| A( \phi_1(Y)-\phi_2(Y))\|-(1+\gamma_0)5h\| \phi_1(Y)-\phi_2(Y)\| \\
& \geq  (e^{\chi_{m_0}-\gamma}-5h(1+\gamma_0))\| \phi_1(Y)-\phi_2(Y)\|\\
& \geq e^{2\gamma}\| \phi_1(Y)-\phi_2(Y)\| \mbox{    } \mbox{  car  }  (e^{\chi_{m_0}-\gamma}-5h(1+\gamma_0))\geq e^{2\gamma}. \\
\end{split}
\end{equation*}

L'image de $A_{l}^{i}$ par $g_{\widehat{f}^{i}(\widehat{x})}$ est incluse dans $A_{l}^{i+1}$ d'où: 

$$\forall i=0,\cdots,m-1 \mbox{  et  }  \forall l \geq 0,\ d_{i+1}(A_l^{i+1},A_{l+1}^{i+1})\geq e^{2 \gamma} d_{i}(A_l^{i},A_{l+1}^{i}).$$

On obtient ainsi: 

$$d_{0}(A_l^{0},A_{l+1}^{0})\leq e^{-2\gamma m}d_m(A_l^m,A_{l+1}^m).$$

Si $C^{-1}=C_{\gamma}^{-1}(\widehat{f}^m(\widehat{x}))\circ \tau_{f^m(x)}^{-1} \circ \tau_{x}\circ C_{\gamma}(\widehat{x})$, on a $C^{-1}(A_{l-1}^0)\supset A_l=A_l^m$ (car on fait un cut-off) d'où: 

\begin{equation*}
\begin{split}
 d_0(A_l^{0},A_{l+1}^0) & =d(A_{l}^0,A_{l+1}^0) \leq e^{-2\gamma m}d_m(A_l^m, A_{l+1}^m)\\
& \leq e^{-2\gamma m}d_m(C^{-1}(A_{l-1}^0),C^{-1}(A_{l}^0)) \\
& \leq e^{\gamma}e^{-2\gamma m} d(A_{l-1}^0,A_l^{0}) \mbox{  (exactement comme au lemme précédent)} \\
& \leq e^{-\gamma}d(A_{l-1}^0,A_l^{0}).\\
\end{split}
\end{equation*}

C'est ce qu'on voulait démontrer. 

\end{proof}

Remarquons aussi que les $A_l^{i}$ convergent quand $l \rightarrow +\infty$ vers $A^{i}$ dans $G_v^{i}$ pour $i=0,\cdots,m$ par les inégalités que l'on a prouvées. 

Rappelons que le point $z_{l,l+1}$ vérifie $C_{\gamma}^{-1}(\widehat{x})\tau_x^{-1}(z_{l,l+1})\in B_{l+1}\cap A_{l}^0$. En particulier quand $l\rightarrow +\infty$ on a $C_{\gamma}^{-1}(\widehat{x})\tau_x^{-1}(z)\in B\cap A^0$.

$B$ et $A^0$ sont des graphes de fonctions holomorphes comme limites uniformes de fonctions holomorphes. 

On va maintenant utiliser ces deux ensembles pour montrer que $z$ est hyperbolique. 

Cela va se faire en deux étapes: tout d'abord nous construisons des espaces invariants $E_u(z)$ et $E_s(z)$ puis dans une deuxième étape nous estimerons les valeurs propres de $Df^m(z)$ dessus. 

{\bf{Construction de sous-espaces invariants:}}

On a $g_{\widehat{f}^{m-1}(\widehat{x})}\circ \cdots\circ g_{\widehat{x}}\circ C_{\gamma}^{-1}(\widehat{x})\tau_x^{-1}(\tau_x\circ C_{\gamma}(\widehat{x})(A_l^0)) \subset A_l^m$ et 

$A_l^m \subset  C_{\gamma}^{-1}(\widehat{f}^m(\widehat{x}))\circ \tau_{f^m(x)}^{-1}\circ \tau_x \circ C_{\gamma}(\widehat{x}) (A_{l-1}^0) $ ce qui implique: 

$$\tau_{f^m(x)}\circ C_{\gamma}(\widehat{f}^m(\widehat{x}))\circ g_{\widehat{f}^{m-1}(\widehat{x})}\circ \cdots \circ g_{\widehat{x}} \circ C_{\gamma}^{-1}(\widehat{x})\tau_x^{-1}(\tau_x\circ C_{\gamma}(\widehat{x})(A_l^0)) \subset \tau_x\circ C_{\gamma}(\widehat{x})(A_{l-1}^0).$$

 En passant à  la limite sur $l$, on obtient $f^m(\tau_x\circ C_{\gamma}(\widehat{x})(A^0)) \subset \tau_x\circ C_{\gamma}(\widehat{x})(A^0)$ (et $z=f^m(z)\in f^m(\tau_x\circ C_{\gamma}(\widehat{x})(A^0))$).

En particulier si on note $E_s(z)$ l'espace tangent à  $\tau_x\circ C_{\gamma}(\widehat{x})(A^0)$ en $z$, on a: 

$$Df^m(z)(E_s(z))\subset E_s(z).$$

On verra que si $0$ est valeur propre de $Df^m(z)$, l'inclusion $Df^m(z)(E_s(z))\subset E_s(z)$ peut être stricte. 

Faisons un raisonnement analogue pour l'espace $E_u(z)$. 

Dans la suite, on note $\Delta_0=\overline{B_{k_1}(0,hr(\widehat{x}))}\times \overline{B_{k_2}(0,e^{\gamma} h r(\widehat{x}))}$, pour $i=1,\cdots,m-1$, $\Delta_i=\overline{B_{k_1}(0, hr(\widehat{f}^{i}(\widehat{x})))}\times \overline{B_{k_2}(0,hr(\widehat{f}^{i}(\widehat{x})))}$ et $\Delta_m=\overline{B_{k_1}(0,e^{\gamma} hr(\widehat{f}^{m}(\widehat{x})))}\times \overline{B_{k_2}(0,h r(\widehat{f}^{m}(\widehat{x})))}$.

En passant à la limite sur les $B_j$ on a: 

$$C_{\gamma}^{-1}(\widehat{x})\tau_x^{-1} \tau_{f^m(x)}C_{\gamma}(\widehat{f}^m(\widehat{x}))(g_{\widehat{f}^{m-1}(\widehat{x})}(\cdots(g_{\widehat{f}(\widehat{x})}(g_{\widehat{x}}(B)\cap\Delta_1)\cap \Delta_2)\cdots)\cap\Delta_m)\cap \Delta_0=B$$

(cut-off successifs).

Le point $C_{\gamma}^{-1}(\widehat{x}) \tau_x^{-1}(z)$ est dans $B$, ensuite $g_{\widehat{f}^{i}(\widehat{x})}\circ \cdots\circ g_{\widehat{x}}(C_{\gamma}^{-1}(\widehat{x})\tau_x^{-1}(z))$ est dans l'intérieur de $\Delta_{i+1}$ pour $i=0,\cdots,m-1$ et $C_{\gamma}^{-1}(\widehat{x})\tau_{x}^{-1}\tau_{f^m(x)}C_{\gamma}(\widehat{f}^m(\widehat{x}))g_{\widehat{f}^{m-1}(\widehat{x})}\circ \cdots \circ g_{\widehat{x}}(C_{\gamma}^{-1}(\widehat{x}) \tau_x^{-1}(z))$ est dans l'intérieur de $\Delta_{0}$. 

On peut donc trouver un voisinage $U$ de $C_{\gamma}^{-1}(\widehat{x})\tau_x^{-1}(z)$ tel que $g_{\widehat{f}^{i}(\widehat{x})}\circ \cdots\circ g_{\widehat{x}}(U)\subset \Delta_{i+1} \mbox{  pour  } i=0,\cdots,m-1$ et $C_{\gamma}^{-1}(\widehat{x})\tau_x^{-1}\tau_{f^m(x)}C_{\gamma}(\widehat{f}^{m}(\widehat{x}))g_{\widehat{f}^{m-1}(\widehat{x})}\circ \cdots\circ g_{\widehat{x}}(U)\subset \Delta_0$.

En particulier:

$$C_{\gamma}^{-1}(\widehat{x})\tau_x^{-1}\tau_{f^m(x)}C_{\gamma}(\widehat{f}^{m}(\widehat{x}))g_{\widehat{f}^{m-1}(\widehat{x})}\circ \cdots\circ g_{\widehat{x}}(U\cap B)\subset B$$

ce qui implique que $f^m(\tau_x C_{\gamma}(\widehat{x})(U\cap B))\subset \tau_x C_{\gamma}(\widehat{x})(B)$. 

Si on note $E_u(z)$ l'espace tangent à  $z$ en $\tau_x C_{\gamma}(\widehat{x})(B)$, on a  comme précédemment:

$$Df^m(z)E_u(z)\subset E_u(z).$$

\begin{Rem}
On verra qu'ici on a une égalité, c'est-à-dire que $Df^m(z)$ est un isomorphisme de $E_u(z)$ dans lui-même. 
\end{Rem}

{\bf{Estimation des valeurs propres de $Df^m(z)$:}}

Soit $v$ un vecteur tangent à $A^0$ au point $ C_{\gamma}^{-1}(\widehat{x})\tau_x^{-1}(z)$ avec $v\neq 0$. 

On note $g=g_{\widehat{f}^{m-1}(\widehat{x})}\circ \cdots \circ g_{\widehat{x}}$ et on va évaluer $\|Dg(C_{\gamma}^{-1}(\widehat{x})\tau_x^{-1}(z))v\|$. 

Supposons dans un premier temps que cette quantité est non nulle. On a: 

\begin{equation*}
\begin{split}
& \|Dg(C_{\gamma}^{-1}(\widehat{x})\tau_x^{-1}(z))v\| \\
&=\| Dg_{\widehat{f}^{m-1}(\widehat{x})}(g_{\widehat{f}^{m-2}(\widehat{x})}\circ \cdots \circ g_{\widehat{x}} C_{\gamma}^{-1}(\widehat{x}) \tau_{x}^{-1}(z)) D(g_{\widehat{f}^{m-2}(\widehat{x})}\circ \cdots \circ g_{\widehat{x}})(C_{\gamma}^{-1}(\widehat{x}) \tau_x^{-1}(z))(v) \|\\
&=\frac{\| Dg_{\widehat{f}^{m-1}(\widehat{x})}(g_{\widehat{f}^{m-2}(\widehat{x})}\circ \cdots \circ g_{\widehat{x}} C_{\gamma}^{-1}(\widehat{x})  \tau_{x}^{-1}(z)) D(g_{\widehat{f}^{m-2}(\widehat{x})}\circ \cdots \circ g_{\widehat{x}})(C_{\gamma}^{-1}(\widehat{x}) \tau_x^{-1}(z))(v) \|}{\| D(g_{\widehat{f}^{m-2}(\widehat{x})}\circ \cdots \circ g_{\widehat{x}})(C_{\gamma}^{-1}(\widehat{x})\circ \tau_x^{-1}(z))(v) \|} \\
 & \times \| D(g_{\widehat{f}^{m-2}(\widehat{x})}\circ \cdots \circ g_{\widehat{x}})(C_{\gamma}^{-1}(\widehat{x})\circ \tau_x^{-1}(z))(v) \|, \\
\end{split}
\end{equation*}

puis on recommence ce que l'on vient de faire avec $g_{\widehat{f}^{m-2}(\widehat{x})} \circ \cdots \circ g_{\widehat{x}}$ au lieu de $g_{\widehat{f}^{m-1}(\widehat{x})} \circ \cdots \circ g_{\widehat{x}}$. Après $m$ fois on a: 

\begin{equation}{\label{equation1}}
\| Dg(C_{\gamma}^{-1}(\widehat{x})\tau_x^{-1}(z))v\|=\frac{\| Dg_{\widehat{f}^{m-1}(\widehat{x})}(Z_{m-1})(U_{m-1})\| }{\| U_{m-1}\|}\times \cdots \times \frac{\| Dg_{\widehat{x}}(Z_0)(U_0)\| }{\| U_0\|} \| U_0\|
\end{equation}

avec $Z_i=g_{\widehat{f}^{i-1}(\widehat{x})}\circ \cdots\circ g_{\widehat{x}} C_{\gamma}^{-1}(\widehat{x})\tau_{x}^{-1}(z)$ et $U_i=D(g_{\widehat{f}^{i-1}(\widehat{x})}\circ \cdots\circ g_{\widehat{x}})(C_{\gamma}^{-1}(\widehat{x})\tau_{x}^{-1}(z))(v)$ pour $i=0,\cdots,m-1$.

Maintenant on a: 

$$g_{\widehat{f}^{i-1}(\widehat{x})}\circ \cdots\circ g_{\widehat{x}}(A^0)\subset A^{i}=\lim_{l\to +\infty} A_{l}^{i} \mbox{    } \mbox{  par construction }.$$

Cela signifie que $U_i$ est tangent à  $A^{i}$ au point $Z_i$. 

$A^{i}$ est un graphe de la forme $(\phi^{i}(Y),Y)$ avec $\| Y\|\leq hr(\widehat{f}^{i}(\widehat{x}))$ pour $i=1,\cdots,m$ et $\| Y\|\leq e^{\gamma} hr(\widehat{x})$ pour $i=0$. On a aussi
$\mathrm{Lip}( \phi^{i}) \leq \gamma_0$  pour $i=0,\cdots,m$. Le vecteur $U_i$ est donc de la forme $U_i=(D\phi^{i}(Y)\alpha_i,\alpha_i)$. 

On a alors $\| Dg_{\widehat{f}^{i}(\widehat{x})}(Z_i)(U_i)\|\leq e^{-2\gamma}\| U_i\|$. En effet, pour $\| (X,Y)\|\leq 5hr(\widehat{f}^{i}(\widehat{x}))$:

$$Dg_{\widehat{f}^{i}(\widehat{x})}(X,Y)(w_1,w_2)=(Aw_1+DR(X,Y)(w_1,w_2),Bw_2+DU(X,Y)(w_1,w_2))$$

d'où: 

$$\| Dg_{\widehat{f}^{i}(\widehat{x})}(Z_i)(w_1,w_2)\| \leq \| A\| \| w_1\| +10h\| (w_1,w_2)\|+\| B\| \| w_2\| $$

avec $(w_1,w_2)=U_i$. Mais $\| w_1\|=\| D\phi^{i}(Y)\alpha_i\|\leq \gamma_0 \| \alpha_i\|=\gamma_0 \| w_2\|$ et donc:

\begin{equation*}
\begin{split}
& \| Dg_{\widehat{f}^{i}(\widehat{x})}(Z_i)(w_1,w_2)\| \leq (\gamma_0\| A\|+10h(1+\gamma_0)+\| B\|)\| w_2\| \\
& \leq (\gamma_0e^{\chi_1+\gamma}+10h(1+\gamma_0)+e^{\chi_{m_0+1}+\gamma})\| w_2\| \leq e^{-3\gamma}\| w_2\| \\
\end{split}
\end{equation*}

(voir la démonstration du lemme \ref{lemme5} pour la dernière inégalité). 

Cette dernière quantité est bien plus petite que $e^{-2\gamma}\| U_i\|$ car 

$$ \| U_i\| \geq \| w_2\| -\| w_1\|\geq (1-\gamma_0)\| w_2\|\geq e^{-\gamma}\| w_2\|.$$

Si on reprend l'équation (\ref{equation1}), on obtient ainsi

$$\|Dg(C_{\gamma}^{-1}(\widehat{x})\tau_x^{-1}(z))v\|\leq e^{-2\gamma m}\| v\|.$$

C'est aussi vrai si le membre de gauche est nul. 

Considérons encore $C=C_{\gamma}^{-1}(\widehat{x})\tau_x^{-1}\tau_{f^m(x)}C_{\gamma}(\widehat{f}^{m}(\widehat{x}))$. On a: 

\begin{equation*}
\begin{split}
& \| D(C\circ g)(C_{\gamma}^{-1}(\widehat{x})\tau_x^{-1}(z))(v)\| \\
& \leq  \| DC(g\circ C_{\gamma}^{-1}(\widehat{x})\circ \tau_x^{-1}(z))\| \| Dg(C_{\gamma}^{-1}(\widehat{x})\tau_x^{-1}(z))(v)\| \\
& \leq \| C_{\gamma}^{-1}(\widehat{x})\circ C_{\gamma}(\widehat{f}^m(\widehat{x}))\| e^{-2\gamma m}\| v\|\\
& \leq e^{\gamma-2\gamma m}\| v\|  \mbox{    } \mbox{  car  } \eta<<\gamma\\
& \leq e^{-\gamma}\| v\|. \\
\end{split}
\end{equation*}

Maintenant on va itérer cette inégalité. Pour cela, on remarque que:

$$(C\circ g)(C_{\gamma}^{-1}(\widehat{x})\circ\tau_x^{-1}(z))=C_{\gamma}^{-1}(\widehat{x})\tau_x^{-1}\circ f^m(z)=C_{\gamma}^{-1}(\widehat{x})\circ\tau_x^{-1}(z)$$

et $(C\circ g)(A^0)\subset C(A^m)\subset A^0$. 

On peut donc recommencer et par une formule du même type que celle de l'équation (\ref{equation1}), on a pour $k\geq 1$: 

$$\| D(\underbrace{C\circ g \circ \cdots \circ C\circ g}_{k \mbox{  fois}})(C_{\gamma}^{-1}(\widehat{x})\circ\tau_x^{-1}(z))(v)\|   \leq  e^{-\gamma k}\| v\|.$$

Ensuite:

\begin{equation*}
\begin{split}
& D(C\circ g \circ \cdots \circ C\circ g)(C_{\gamma}^{-1}(\widehat{x})\circ\tau_x^{-1}(z))v \\
&= D(C_{\gamma}^{-1}(\widehat{x})\circ\tau_x^{-1}\circ f^{mk}\circ \tau_x\circ C_{\gamma}(\widehat{x}))(C_{\gamma}^{-1}(\widehat{x})\circ\tau_x^{-1}(z))v \\
&= D(C_{\gamma}^{-1}(\widehat{x})\circ\tau_x^{-1})(f^{mk}(z))\circ Df^{mk}(z)\circ D(\tau_x \circ C_{\gamma}(\widehat{x}))(C_{\gamma}^{-1}(\widehat{x})\circ\tau_x^{-1}(z))v,\\
\end{split}
\end{equation*}

d'où: 

\begin{equation*}
\begin{split}
& \| D(C\circ g \circ \cdots \circ C\circ g)(C_{\gamma}^{-1}(\widehat{x})\circ\tau_x^{-1}(z))v\| \\
& \geq \| D(C_{\gamma}^{-1}(\widehat{x})\circ\tau_x^{-1})(z)^{-1}\| ^{-1} \times \| Df^{mk}(z)\circ D(\tau_x \circ C_{\gamma}(\widehat{x}))(C_{\gamma}^{-1}(\widehat{x})\circ\tau_x^{-1}(z))v\|.\\
\end{split}
\end{equation*}

Soit $w\in E_s(z)$. Par définition de $E_s(z)$, $w$ s'écrit $w=D(\tau_x\circ C_{\gamma}(\widehat{x}))(C_{\gamma}^{-1}(\widehat{x})\tau_x^{-1}(z))(v)$ avec $v$ tangent à  $A^0$ en $C_{\gamma}^{-1}(\widehat{x})\tau_{x}^{-1}(z)$. D'où: 

$$\| Df^{mk}(z)w\|\leq \| D(C_{\gamma}^{-1}(\widehat{x})\tau_x^{-1})(z)^{-1}\|e^{-\gamma k}\| v\|\leq C(X,r_0)e^{-\gamma k}\| w\|.$$

Donc $Df^m(z)$ est contractante sur $E_s(z)$ et ses valeurs propres sont de module inférieur ou égal à  $e^{-\gamma}$. 

Passons à  $E_u(z)$. Soit $v$ un vecteur tangent à  $B$ au point $C_{\gamma}^{-1}(\widehat{x})\tau_x^{-1}(z)$ avec $v\neq 0$. Notons $Z_i=g_{\widehat{f}^{i-1}(\widehat{x})}\circ \cdots\circ g_{\widehat{x}}C_{\gamma}^{-1}(\widehat{x}) \tau_x^{-1}(z)$ et
$U_i=D(g_{\widehat{f}^{i-1}(\widehat{x})}\circ \cdots\circ g_{\widehat{x}})(C_{\gamma}^{-1}(\widehat{x}) \tau_x^{-1}(z))(v)$ pour $i=1,\cdots,m-1$.

On commence par estimer $\| Dg_{\widehat{f}^{i}(\widehat{x})}(Z_i)U_i\|$. 

Si $U$ est un petit voisinage de $C_{\gamma}^{-1}(\widehat{x}) \tau_x^{-1}(z)$ on a: 

$$g_{\widehat{f}^{i-1}(\widehat{x})}\circ \cdots\circ g_{\widehat{x}}(U\cap B)\subset B^{i}=\lim_{j\rightarrow +\infty} B_j^{i}\ \mbox{  pour  }i=0,\cdots,m.$$

En particulier $U_i$ est tangent à  $B^{i}$ en $Z_i$. 

Comme $B^{i}$ est un graphe du type $(X,\psi^{i}(X))$ avec $\| X \|\leq hr(\widehat{f}^{i}(\widehat{x}))$ pour $i=0,\cdots,m-1$, $\| X \|\leq e^{\gamma} hr(\widehat{f}^m(\widehat{x}))$ pour $i=m$ et que
$\mathrm{Lip}( \psi^{i}) \leq \gamma_0$ pour $i=0,\cdots,m$,  le vecteur $U_i$ est de la forme $U_i=(\alpha_i,D\psi^{i}(X)\alpha_i)$. 

On a alors: 

$$\| Dg_{\widehat{f}^{i}(\widehat{x})}(Z_i)U_i\|\geq e^{2\gamma}\| U_i\| .$$

En effet: 

$$Dg_{\widehat{f}^{i}(\widehat{x})}(X,Y)(w_1,w_2)=(Aw_1+DR(X,Y)(w_1,w_2),Bw_2+DU(X,Y)(w_1,w_2)),$$

d'où: 

\begin{equation*}
\begin{split}
& \| Dg_{\widehat{f}^{i}(\widehat{x})}(Z_i)(w_1,w_2)\| \\
& \geq \| Aw_1\| - \| DR(Z_i)(w_1,w_2)\|-\| B\| \| w_2\| - \| DU(Z_i)(w_1,w_2)\|  \\
& \geq \| A^{-1}\| ^{-1}\| w_1\| -5h\| (w_1,w_2)\| -\| B\| \| w_2\| -5h\| (w_1,w_2)\|.\\
\end{split}
\end{equation*}

Or $U_i=(\alpha_i,D\psi^{i}(X)\alpha_i)=(w_1,w_2)$ donc $\| w_2\| \leq \gamma_0 \| \alpha_i\| =\gamma_0 \| w_1\|$, et ainsi

\begin{equation*}
\begin{split}
& \| Dg_{\widehat{f}^{i}(\widehat{x})}(Z_i)(w_1,w_2)\| \\
& \geq e^{\chi_{m_0}-\gamma}\| w_1\| -5h(1+\gamma_0)\| w_1\|-e^{\chi_{m_0+1}+\gamma}\gamma_0 \| w_1\| -5h(1+\gamma_0)\| w_1\| \\
& =(e^{\chi_{m_0}-\gamma}-10h(1+\gamma_0)-\gamma_0 e^{\chi_{m_0+1}+\gamma})\| w_1\|.\\
\end{split}
\end{equation*}

Comme $\| (w_1,w_2)\| \leq (1+\gamma_0)\| w_1\|$ on obtient:

$$\| Dg_{\widehat{f}^{i}(\widehat{x})}(Z_i)U_i\|\geq \frac{e^{\chi_{m_0}-\gamma}-10h(1+\gamma_0)-\gamma_0e^{\chi_{m_0+1}+\gamma}}{1+\gamma_0}\times \| U_i\|\geq e^{2\gamma}\| U_i\|$$

(voir la preuve du lemme \ref{lemme4} pour la dernière inégalité).

Remarquons que cela implique aussi que $Dg_{\widehat{f}^{i}(\widehat{x})}(Z_i)(U_i)=U_{i+1}$ est non nul si $U_i\neq 0$. 

En particulier, comme $U_0=v\neq 0$, on a $U_i\neq 0$ pour $i=0,\cdots,m-1$. 

Maintenant, si on utilise la formule de l'équation (\ref{equation1}), on a: 

$$\| Dg(C_{\gamma}^{-1}(\widehat{x})\tau_x^{-1}(z))v\| \geq e^{2\gamma m}\| v\|,$$

et si on compose par $C=C_{\gamma}^{-1}(\widehat{x})\tau_x^{-1}\tau_{f^m(x)}C_{\gamma}(\widehat{f}^m(\widehat{x}))$, on obtient

\begin{equation*}
\begin{split}
& \| D(C\circ g)(C_{\gamma}^{-1}(\widehat{x})\tau_x^{-1}(z))v\| \\
&= \| DC(g\circ C_{\gamma}^{-1}(\widehat{x})\tau_x^{-1}(z))\circ Dg(C_{\gamma}^{-1}(\widehat{x})\tau_x^{-1}(z))v\|\\
& \geq  \| DC(g\circ C_{\gamma}^{-1}(\widehat{x})\tau_x^{-1}(z))^{-1}\|^{-1}e^{2\gamma m}\| v\| \\
&= \| C_{\gamma}^{-1}(\widehat{f}^m(\widehat{x}))\circ C_{\gamma}(\widehat{x})\|^{-1}e^{2\gamma m}\| v\| \geq  e^{-\gamma}e^{2\gamma m}\| v\|  \mbox{    } \mbox{  si  } \eta<<\gamma \\
& \geq  e^{\gamma}\| v\|.\\
\end{split}
\end{equation*}

On va itérer cette inégalité. 

Pour cela, on remarque $C\circ g(C_{\gamma}^{-1}(\widehat{x})\tau_x^{-1}(z))=C_{\gamma}^{-1}(\widehat{x})\tau_x^{-1}(z)$ et si $W_1$ est un voisinage suffisamment petit de $C_{\gamma}^{-1}(\widehat{x})\tau_x^{-1}(z)$, on a $C\circ g(B\cap W_1)\subset B\cap U$.

On peut donc recommencer ce que l'on vient de faire. En itérant $k$ fois le procédé (quitte à  prendre $W_k$ à  chaque fois), on a par une formule du même type que celle de l'équation (\ref{equation1}): 

$$\| D(C\circ g\circ \cdots\circ C\circ g)(C_{\gamma}^{-1}(\widehat{x})\tau_x^{-1}(z))(v)\|\geq e^{\gamma k}\| v\|.$$

Ensuite comme précédemment: 

\begin{equation*}
\begin{split}
& D(C\circ g\circ \cdots\circ C\circ g)(C_{\gamma}^{-1}(\widehat{x})\tau_x^{-1}(z))(v) \\
&=D(C_{\gamma}^{-1}(\widehat{x})\tau_x^{-1})(z)\circ Df^{mk}(z)(D(\tau_x\circ C_{\gamma}(\widehat{x})) (C_{\gamma}^{-1}(\widehat{x})\tau_x^{-1}z)(v)), \\
\end{split}
\end{equation*}

d'où: 

\begin{equation*}
\begin{split}
& \| D(C\circ g\circ \cdots\circ C\circ g)(C_{\gamma}^{-1}(\widehat{x})\tau_x^{-1}(z))(v)\| \\
& \leq  \| D(C_{\gamma}^{-1}(\widehat{x})\tau_x^{-1})(z)\| \times \| Df^{mk}(z)(D(\tau_x\circ C_{\gamma}(\widehat{x})) (C_{\gamma}^{-1}(\widehat{x})\tau_x^{-1}z)(v)) \|. \\
\end{split}
\end{equation*}

Soit $w\in E_u(z)$. Par définition $w=D(\tau_x\circ C_{\gamma}(\widehat{x}))(C_{\gamma}^{-1}(\widehat{x})\tau_x^{-1}(z))(v)$ avec $v$ tangent à  $B$ en $C_{\gamma}^{-1}(\widehat{x})\tau_x^{-1}(z)$. Donc $e^{\gamma k}\| v\| \leq \| D(C_{\gamma}^{-1}(\widehat{x})\tau_x^{-1})(z)\| \| Df^{mk}(z)w\|$ ou encore $e^{\gamma k}\| w\|\leq C(X,r_0)\| Df^{mk}(z) w\|$. En particulier, $Df^m(z)$ est dilatante sur $E_u(z)$ et ses valeurs propres sont de module supérieur ou égal à $e^{\gamma}$. 

Cela termine la démonstration du Closing Lemma.

Faisons deux remarques supplémentaires: 
\begin{enumerate}
\item Tout d'abord comme $Df^m(z)_{| E_u(z)}$ est dilatante, $Df^m(z)$ est un isomorphisme de $E_u(z)$ dans lui-même. On a donc $Df^m(z)E_u(z)=E_u(z)$. 
\item L'inclusion $Df^m(z)E_s(z)\subset E_s(z)$ peut être stricte. En effet $Ker Df^m(z)\subset E_s(z)$ (pour le voir faire une décomposition de $v\in Ker Df^m(z)$ dans une base composée de vecteurs de $E_s(z)$ et $E_u(z)$). 
\end{enumerate}

\section{{\bf Approximation par des Bernoulli}}
\par

Le but de ce paragraphe est de démontrer le théorème \ref{theoreme2}. Nous allons pour cela suivre la méthode d'A. Katok et L. Mendoza (voir \cite{KH}). Comme dans le Closing Lemma, ce qui change c'est que $f$ n'est pas inversible et possède un ensemble d'indétermination.

On garde les $\gamma$, $\delta$ et $\epsilon_0$ du paragraphe \ref{closinglemma}. Soit $\rho>0$ et $\varphi_1,\cdots,\varphi_{k_0}\in \mathrm{C}^0(X,\mathbb{R})$. 

On fixe $0 < r < 1$ petit de sorte que $0<r<\rho$ et $\frac{h_{\mu}(f)}{1+r}-3r>h_{\mu}(f)-\rho$. 

On choisit ensuite $\epsilon>0$ suffisamment petit pour que $dist(x,y)<\epsilon$ implique $| \varphi_i(x)-\varphi_i(y)|<\frac{r}{2}$ pour $i=1,\cdots,k_0$. C'est possible car les fonctions $\varphi_i$ sont uniformément continues ($X$ est compact). 

Par le théorème de Brin-Katok, si on note

$$B_{m} (x, \epsilon)= \{ y \in X \mbox{  ,  } dist(f^{p}(x),f^{p}(y)) < \epsilon \mbox{  pour  } p=0, \cdots m-1 \},$$ 

on a

$$h_{\mu}(f)=\lim_{\epsilon\rightarrow 0} \liminf_{m \rightarrow + \infty} -\frac{1}{m} \log \mu (B_m(x,\epsilon)) \mbox{    } \mbox{  pour  }\mu-\mbox{presque tout  }x.$$

On pose: 

$$\Gamma_{\epsilon,m_0}=\{x, \forall m\geq m_0\ \mu (B_m(x,\epsilon))\leq e^{-h_{\mu}(f)m+rm}\}.$$

Si $\epsilon$ est assez petit, on a: 

$$1-\frac{\delta}{2}\leq \mu(\{x,\liminf_{m \rightarrow + \infty} -\frac{1}{m} \log  \mu (B_m(x,\epsilon)) \geq h_{\mu}(f)-\frac{r}{2}\}) \leq \mu (\cup_{m_0} \Gamma_{\epsilon,m_0})$$

donc $\mu (\Gamma_{\epsilon,m_0})\geq 1-\delta$ si $m_0$ est assez grand.

Dans la suite on considère le $\gamma_0$ (qui était petit par rapport à  $\gamma$), le $h$ et le $\eta$ du Closing Lemma ou de sa preuve. On mettra des conditions un peu plus fortes sur ces quantités que l'on aurait pu supposer dans la démonstration du Closing Lemma si on voulait. 

Voici maintenant le plan de la preuve: 

Dans un premier temps, on va construire $\widehat{y}\in\Lambda_{\delta}$ avec beaucoup de récurrence autour de lui. Ensuite nous utiliserons ce point pour fabriquer le codage et vérifier les trois points du théorème. 

\subsection{\bf{Construction du $\widehat{y}\in \Lambda_{\delta}$ avec de la récurrence:}}

Les $\{B(\widehat{y}, \eta),\widehat{y}\in \Lambda_{\delta}\}$ forment un recouvrement de $\Lambda_{\delta}$ qui est compact: on peut donc trouver un sous-recouvrement fini $\cup_{i=1}^{t} B(\widehat{y_{i}}, \eta)$. 

Soit $\xi$ une partition finie de l'extension naturelle $\widehat{X}$, plus fine que $(\Lambda_{\delta},\widehat{X}\setminus \Lambda_{\delta})$ et telle que le diamètre des éléments de $\xi$ soit plus petit que la constante de Lebesgue du recouvrement de $\Lambda_{\delta}$ par $\cup_{i=1}^{t} B(\widehat{y_{i}}, \eta)$. 

On note: 

\begin{equation*}
\begin{split}
\Lambda_{\delta,m_0'} & =\{ \widehat{x}\in \Lambda_{\delta}, \forall m\geq m_0' \mbox{  on a  } \widehat{f}^q(\widehat{x})\in \xi(\widehat{x}) \mbox{  pour un  } q\in [m,m(1+r)] \mbox{  et  }  \\
& \forall i=1,\cdots,k_0 \mbox{  on a  } \left| \frac{1}{m}\sum_{l=0}^{m-1}\varphi_i\circ \pi(\widehat{f}^l(\widehat{x}))-\int\varphi_i \mathrm{d} \mu \right|<\frac{r}{2} \}. \\
\end{split}
\end{equation*}

\begin{Lem}
Pour $m_0'$ assez grand, on a $\widehat{\mu}(\Lambda_{\delta,m_0'})\geq 1-2\delta$.
\end{Lem}

\begin{proof}

Les fonctions $\varphi_i \circ \pi$ sont continues sur $\widehat{X}$, donc par le théorème de Birkhoff, on a sur un ensemble de masse $1$ pour $\widehat{\mu}$: 

$$\frac{1}{m}\sum_{l=0}^{m-1} \varphi_i\circ\pi(\widehat{f}^l(\widehat{x})) \stackrel{m}{\longrightarrow} \int \varphi_i\circ \pi\mathrm{d}\widehat{\mu}(\widehat{x})=\int\varphi_i\mathrm{d}\mu.$$

Cela implique que

$$\bigcup_{m_1} \left\{ \widehat{x},\forall m\geq m_1 \mbox{  ,  } \forall i=1, \cdots, k_0 \mbox{  on a  } \left| \frac{1}{m}\sum_{l=0}^{m-1}\varphi_i\circ \pi(\widehat{f}^l(\widehat{x}))-\int\varphi_i\mathrm{d}\mu \right|<\frac{r}{2} \right\}$$

est de mesure $1$ pour $\widehat{\mu}$.

Si $m_1$ est assez grand on a donc: 

$$\widehat{\mu} \left( \left\{ \widehat{x},\forall m\geq m_1 \mbox{  ,  } \forall i=1, \cdots, k_0 \mbox{  on a  } \left| \frac{1}{m}\sum_{l=0}^{m-1}\varphi_i\circ \pi(\widehat{f}^l(\widehat{x}))-\int\varphi_i\mathrm{d}\mu \right|<\frac{r}{2} \right\} \right) \geq 1-\frac{\delta}{2}.$$

Soit maintenant $C\in \xi$ avec $C\subset \Lambda_{\delta}$. On suppose que $\widehat{\mu}(C)>0$. 

Si on applique le théorème de Birkhoff à  la fonction $\widehat{x}\to \chi_C(\widehat{x})$ (qui vaut $1$ sur $C$ et $0$ ailleurs), on a: 

$$\frac{1}{m}\sum_{l=0}^{m-1}\chi_C(\widehat{f}^l(\widehat{x}))\stackrel{m}{\longrightarrow} \int \chi_C(\widehat{x})\mathrm{d}\widehat{\mu}(\widehat{x})=\widehat{\mu}(C)$$

sur un ensemble de masse $1$ pour $\widehat{\mu}$. 

Cela signifie en particulier que: 

\begin{equation*}
\begin{split}
\bigcup_{m_2}\{\widehat{x},\forall m\geq m_2 \mbox{  on a  } & \frac{1}{m}\sum_{l=0}^{m-1}\chi_C(\widehat{f}^l(\widehat{x}))<\widehat{\mu}(C)(1+\frac{r}{3}) \mbox{  et}\\
& \frac{1}{[m(1+r)]+1}\sum_{l=0}^{[m(1+r)]}\chi_C(\widehat{f}^l(\widehat{x}))>\widehat{\mu}(C)(1-\frac{r}{3})\}
\end{split}
\end{equation*}

est de mesure $1$ pour $\widehat{\mu}$. 

Si $m_2=m_2(C)$ est assez grand, on a donc: 

\begin{equation*}
\begin{split}
& \widehat{\mu}(\Gamma(C))=\widehat{\mu}(\{\widehat{x} \in C,\forall m\geq m_2 \mbox{  on a  } \frac{1}{m}\sum_{l=0}^{m-1}\chi_C(\widehat{f}^l(\widehat{x})) < \widehat{\mu}(C)(1+\frac{r}{3}) \mbox{  et  } \\
& \frac{1}{[m(1+r)]+1}\sum_{l=0}^{[m(1+r)]}\chi_C(\widehat{f}^l(\widehat{x}))>\widehat{\mu}(C)(1-\frac{r}{3})\})\geq \widehat{\mu}(C)-\frac{\delta}{2\mathrm{card}\xi}.
\end{split}
\end{equation*}

Si $\widehat{x}$ est dans $\Gamma(C)$ et si $\widehat{f}^l(\widehat{x})\notin C$ pour $l=m,\cdots,[m(1+r)]$ (avec $m \geq m_2$), on a:

$$(1+r)(1-\frac{r}{3})\widehat{\mu}(C)<\frac{1}{m}\sum_{l=0}^{[m(1+r)]}\chi_C(\widehat{f}^l(\widehat{x}))=\frac{1}{m}\sum_{l=0}^{m-1}\chi_C(\widehat{f}^l(\widehat{x}))<\widehat{\mu}(C)(1+\frac{r}{3}),$$

d'où: 

$$(1+r)(1-\frac{r}{3})<1+\frac{r}{3}\Longrightarrow \frac{r}{3}<\frac{r^2}{3} \mbox{, ce qui est absurde.}$$

Cela signifie que pour $\widehat{x}\in \Gamma(C)$ on a $\forall m\geq m_2(C)$, $\widehat{x}\in C$ et il existe $q\in[m,m(1+r)]$ avec $\widehat{f}^q(\widehat{x})\in C=\xi(\widehat{x})$. Soit

$$m_2=\max_{C\in\xi,C \subset\Lambda_{\delta},\widehat{\mu}(C)>0}\ m_2(C).$$

L'ensemble $\{\widehat{x}\in \Lambda_{\delta}, \forall m\geq m_2 \mbox{  il existe  } q\in[m,m(1+r)] \mbox{  avec  }\widehat{f}^q(\widehat{x})\in \xi(\widehat{x})\}$ contient donc $\bigcup_{C\in\xi,C \subset \Lambda_{\delta},\widehat{\mu}(C)>0}\Gamma(C)$. On en déduit que: 

\begin{equation*}
\begin{split}
\widehat{\mu}(\{\widehat{x}\in \Lambda_{\delta}, \forall m\geq m_2 \mbox{  il existe  } & q\in[m,m(1+r)] \mbox{  avec  }\widehat{f}^q(\widehat{x})\in \xi(\widehat{x})\}) \\
& \geq \widehat{\mu}(\Lambda_{\delta})-\frac{\delta}{2}\geq 1-\frac{3\delta}{2}.
\end{split}
\end{equation*}

Ainsi si $m_0'=\max(m_1,m_2)$, on a: 

$$\widehat{\mu}(\Lambda_{\delta,m_0'})\geq 1-\frac{\delta}{2}-\frac{3\delta}{2}=1-2\delta$$

car $\Lambda_{\delta,m_0'}$ contient 

$$\left\{\widehat{x},\forall m\geq m_1 \mbox{  ,  } \forall i=1, \cdots, k_0 \mbox{  on a  } \left| \frac{1}{m}\sum_{l=0}^{m-1}\varphi_i\circ \pi(\widehat{f}^l(\widehat{x}))-\int\varphi_i\mathrm{d}\mu \right|<\frac{r}{2} \right\}$$ 

intersecté avec

$$\left\{ \widehat{x}\in \Lambda_{\delta}, \forall m\geq m_2 \mbox{  il existe  } q\in[m,m(1+r)] \mbox{  avec  }\widehat{f}^q(\widehat{x})\in \xi(\widehat{x}) \right\}.$$

Cela démontre le lemme.

\end{proof}

Maintenant on a $\mu(\pi(\Lambda_{\delta,m_0'})\cap \Gamma_{\epsilon,m_0})\geq 1-3\delta$. Pour $m\geq \max(m_0,m_0')$, soit $E_m$ un ensemble $(m,\epsilon)$-séparé de cardinal maximal dans $\pi(\Lambda_{\delta,m_0'})\cap \Gamma_{\epsilon,m_0}$. 

Si $x\in E_m$, on a $\mu(B_m(x,\epsilon))\leq e^{-h_{\mu}(f)m+rm}$. Le cardinal de $E_m$ est donc supérieur à $(1-3\delta)e^{h_{\mu}(f)m-rm}$.

Pour $q=m,\cdots,[(1+r)m]$, on pose $V_q=\{x\in E_m, \widehat{f}^q(\widehat{x})\in \xi(\widehat{x})\}$ où $\widehat{x}$ est un point de $\Lambda_{\delta,m_0'}$ qui se projette sur $x$. 

Soit $n$ qui maximise le cardinal de $V_q$. On a:

$$\mathrm{card}\ V_n \geq \frac{(1-3\delta)e^{h_{\mu}(f)m-rm}}{rm+1}\geq (1-3\delta)e^{h_{\mu}(f)m-2rm}\geq \frac{1}{2}e^{h_{\mu}(f)m-2rm}.$$

Soit $x\in V_n$. On a $\widehat{f}^n(\widehat{x})\in \xi(\widehat{x})$ et alors $\mbox{dist}(\widehat{x},\widehat{f}^n(\widehat{x})) \leq \mbox{diam}( \xi(\widehat{x}))$ qui est lui-même plus petit que la constante de Lebesgue du recouvrement de $\Lambda_{\delta}$ par $\cup_{i=1}^{t} B(\widehat{y_{i}}, \eta)$.

En particulier il existe $i\in\{1,\cdots,t\}$ tel que $\widehat{x}$ et $\widehat{f}^n(\widehat{x})$ sont dans $B(\widehat{y_i},\eta)$. 

Soit $i$ qui maximise le nombre de $x\in V_n$ qui vérifient cette propriété. On notera $\widehat{y}=\widehat{y_i}$, $y=\pi(\widehat{y})$, et $N$ le nombre de $x\in V_n$ avec $\widehat{x}$ et $\widehat{f}^n(\widehat{x})$ dans $B(\widehat{y},\eta)$. On a $N\geq \frac{e^{h_{\mu}(f)m-2rm}}{2t}$. 

C'est le point $\widehat{y}\in \Lambda_{\delta}$ que l'on voulait construire. 

Remarquons que $\frac{e^{h_{\mu}(f)m-2rm}}{2t}\geq e^{h_{\mu}(f)m-3rm}$ si on prend $m$ grand et que ce nombre est supérieur à  $ e^{h_{\mu}(f)\frac{n}{1+r}-3rn}$ car $m\leq n\leq (1+r)m$. 

On a donc $N\geq  e^{h_{\mu}(f)n-\rho n}$ grâce à  l'hypothèse faite sur $r$ au début. En particulier quitte à  réduire le nombre de points on prendra dans la suite $N= e^{h_{\mu}(f)n-\rho n}$ et nous noterons $x_1,\cdots,x_N$ les $x$ de $V_n$ tels que $\widehat{x}$ et $\widehat{f}^n(\widehat{x})$ sont dans $B(\widehat{y},\eta)$. 

Par construction $x_1,\cdots,x_N$ sont dans $\pi(\Lambda_{\delta,m_0'})$ et nous appellerons $\widehat{x_1},\cdots,\widehat{x_N}$ des points de $\Lambda_{\delta,m_0'}$ qui se projettent sur eux (et qui vérifient $\widehat{x_i},\widehat{f}^n(\widehat{x_i})$ dans $B(\widehat{y},\eta)$). 

Nous allons maintenant utiliser ces points pour construire le codage. 

\subsection{\bf{Construction du codage}}

Nous utiliserons le lemme suivant: 

\begin{Lem}{\label{lemme3.2}}
Pour $\eta>0$ petit, on a: 

Pour tout $\widehat{x},\widehat{y}\in\Lambda_{\delta}$ avec $\mathrm{dist}(\widehat{x},\widehat{y})<\eta$, si $(X,\psi(X))$ est un graphe au-dessus d'une partie de $C_{\gamma}^{-1}(\widehat{y})E_u(\widehat{y})$ (dans le repère $C_{\gamma}^{-1}(\widehat{y})E_u(\widehat{y})\oplus C_{\gamma}^{-1}(\widehat{y})E_s(\widehat{y})$) pour $\| X\| \leq e^{\frac{\gamma}{2}}r(\widehat{y})h$ (respectivement $\| X\| \leq e^{\gamma}r(\widehat{y})h$) avec $\| \psi(0)\|\leq e^{-\frac{\gamma}{2}}r(\widehat{y})h$ (respectivement $\| \psi(0)\|\leq e^{-\gamma}r(\widehat{y})h$) et $\mathrm{Lip}(\psi)\leq \gamma_0e^{-\frac{\gamma}{2}}$ (respectivement $\mathrm{Lip}(\psi)\leq \gamma_0e^{-\gamma}$) alors l'image de ce graphe par $C=C_{\gamma}^{-1}(\widehat{x})\tau_x^{-1}\tau_y C_{\gamma}(\widehat{y})$ est un graphe $(X,\phi(X))$ au-dessus d'une partie de $C_{\gamma}^{-1}(\widehat{x})E_u(\widehat{x})$ pour au moins $\| X\| \leq r(\widehat{x})h$ (respectivement $\| X\| \leq e^{\frac{\gamma}{2}}r(\widehat{x})h$) avec $\| \phi(0)\|\leq r(\widehat{x})h$ (respectivement $\| \phi(0)\|\leq e^{-\frac{3\gamma}{4}}r(\widehat{x})h$) et $\mathrm{Lip}(\phi)\leq \gamma_0$ (respectivement $\mathrm{Lip}(\phi)\leq \gamma_0e^{-\frac{\gamma}{2}}$).
\end{Lem}

\begin{proof}
C'est exactement ce que l'on a fait dans la démonstration du Closing Lemma (voir après le lemme \ref{lemme1}). Il suffit de remplacer $\widehat{f}^m(\widehat{x})$ par $\widehat{y}$ et de mettre des conditions un peu plus fortes sur les $\gamma_0$, $h$ et $\eta$. Ces conditions pouvaient être supposées dans le Closing Lemma si on voulait.
\end{proof}

\begin{Rem}
On a un lemme du même type en remplaçant $E_u(\widehat{x})$ et $E_u(\widehat{y})$ par $E_s(\widehat{x})$ et $E_s(\widehat{y})$ et des graphes $(\phi(Y),Y)$ au lieu de $(X,\psi(X))$. 
\end{Rem}

Le codage va être construit à  partir de graphes provenant de transformées de graphes en arrière et en avant. 

Commençons par ceux construits par des tirés en arrière. 

On se place dans le repère $C_{\gamma}^{-1}(\widehat{y})E_u(\widehat{y})\oplus C_{\gamma}^{-1}(\widehat{y})E_s(\widehat{y})$ pour le $\widehat{y}$ construit dans le paragraphe précédent. 

\noindent Soit $\mathcal{A}^0$ l'ensemble constitué de l'union des graphes d'applications holomorphes $(\psi(Y),Y)$ au-dessus de $C_{\gamma}^{-1}(\widehat{y})E_s(\widehat{y})$ avec $ \| Y\|\leq hr(\widehat{y})e^{\frac{\gamma}{2}}$, $\psi(Y)=\mathrm{constante}$,  $\| \psi(0)\|\leq hr(\widehat{y})e^{-\frac{\gamma}{2}}$ et $\mathrm{Lip}( \psi)=0 \leq \gamma_0e^{-\frac{\gamma}{2}}$.

L'image de ces graphes par $C_i=C_{\gamma}^{-1}(\widehat{f}^n(\widehat{x_i}))\tau_{f^n(x_i)}^{-1}\tau_y C_{\gamma}(\widehat{y})$ (pour $i=1,\cdots,N$) sont des graphes $(\phi(Y),Y)$ au-dessus d'une partie de $C_{\gamma}^{-1}(\widehat{f}^n(\widehat{x_i}))E_s(\widehat{f}^n(\widehat{x_i}))$ pour au moins $\| Y\|\leq hr(\widehat{f}^n(\widehat{x_i}))$ avec $\| \phi(0)\|\leq hr(\widehat{f}^n(\widehat{x_i}))$ et $\mathrm{Lip}( \phi) \leq \gamma_0$ grâce à  la remarque précédente car $\mathrm{dist}(\widehat{f}^n(\widehat{x_i}),\widehat{y})<\eta$ et $\widehat{f}^n(x_i)\in \xi(\widehat{x_i})\subset \Lambda_{\delta}$, $\widehat{y}\in \Lambda_{\delta}$. 

Ensuite on tire en arrière ces graphes $(\phi(Y),Y)$ par $g_{\widehat{f}^{n-1}(\widehat{x_i})},\cdots,g_{\widehat{x_i}}$ (voir le lemme \ref{lemme2}). On obtient à  la fin des graphes $(\phi_0(Y),Y)$ au-dessus d'une partie de $C_{\gamma}^{-1}(\widehat{x_i})E_s(\widehat{x_i})$ pour au-moins $\| Y\|\leq e^{\gamma}hr(\widehat{x_i})$, $\| \phi_0(0)\|\leq e^{-\gamma}hr(\widehat{x_i})$ et $\mathrm{Lip}(\phi_0)\leq e^{-\gamma}\gamma_0$. 

Maintenant, on remet ces graphes dans le repère initial $C_{\gamma}^{-1}(\widehat{y})E_u(\widehat{y})\oplus C_{\gamma}^{-1}(\widehat{y})E_s(\widehat{y})$ c'est-à-dire que l'on prend leur image par $C_{\gamma}^{-1}(\widehat{y})\tau_y^{-1}\tau_{x_i} C_{\gamma}(\widehat{x_i})$.

Comme $\mathrm{dist}(\widehat{x_i},\widehat{y})<\eta$, la remarque précédente implique que l'on obtient des graphes $(\psi_0(Y),Y)$ pour au moins $\| Y\|\leq e^{\frac{\gamma}{2}}hr(\widehat{y})$ avec $\| \psi_0(0)\| \leq e^{-\frac{3\gamma}{4}}hr(\widehat{y})$ et $\mathrm{Lip} (\psi_0) \leq e^{-\frac{\gamma}{2}}\gamma_0$. 

Ce sont exactement les mêmes conditions qu'au départ (c'est-à-dire que l'on pourra recommencer le procédé avec un $\widehat{x_j}$ pas nécessairement égal au $\widehat{x_i}$), sauf qu'en plus on a $\| \psi_0(0)\|\leq e^{-\frac{3\gamma}{4}}hr(\widehat{y})$. 

Grâce à  cette marge, si on prend $\| Y\|\leq e^{\frac{\gamma}{2}}hr(\widehat{y})$ on a (quitte à supposer que $\gamma_0$ est suffisamment petit pour que $e^{-\frac{3\gamma}{4}} + \gamma_0 < e^{-\frac{\gamma}{2}}$): 

$$\| \psi_0(Y)\| \leq \| \psi_0(0)\|+ \| \psi_0(Y)-\psi_0(0)\|\leq  e^{-\frac{3\gamma}{4}}hr(\widehat{y})+\gamma_0e^{-\frac{\gamma}{2}}\| Y\| < hr(\widehat{y})e^{-\frac{\gamma}{2}}$$

c'est-à-dire que les graphes $(\psi_0(Y),Y)$ ne sortent de $B_{k_1}(0,e^{-\frac{\gamma}{2}}hr(\widehat{y}))\times B_{k_2}(0,e^{\frac{\gamma}{2}}hr(\widehat{y}))$ que par le bord $B_{k_1}(0,e^{-\frac{\gamma}{2}}hr(\widehat{y}))\times \partial B_{k_2}(0,e^{\frac{\gamma}{2}}hr(\widehat{y}))$. Ce sont des graphes verticaux. 

Notons $\mathcal{A}_1^0,\cdots,\mathcal{A}_N^0$ les ensembles constitués de l'union de ces graphes obtenus pour $i=1,\cdots,N$. On a donc $\mathcal{A}_i^0\subset \mathcal{A}^0$. On notera aussi $\mathcal{A}_i$ l'image de $\mathcal{A}_i^0$ par $\tau_y\circ C_{\gamma}(\widehat{y})$. 

On a alors: 
\begin{Lem}{\label{lemme3.3}}
Les ensembles $\overline{\mathcal{A}_i}$ sont disjoints pour $i=1,\cdots,N$.
\end{Lem}

\begin{proof}

Soit $z\in \mathcal{A}_i$. Pour $l=0,\cdots,n-1$ on a:

\begin{equation*}
\begin{split}
& \| g_{\widehat{f}^l(\widehat{x_i})}\circ \cdots\circ g_{\widehat{x_i}}\circ C_{\gamma}^{-1}(\widehat{x_i})\tau_{x_i}^{-1}(z)\| \\
& \leq \| g_{\widehat{f}^l(\widehat{x_i})}\circ \cdots\circ g_{\widehat{x_i}}\circ C_{\gamma}^{-1}(\widehat{x_i})\tau_{x_i}^{-1}(z)-g_{\widehat{f}^l(\widehat{x_i})}\circ \cdots\circ g_{\widehat{x_i}}(y)\|+\| g_{\widehat{f}^l(\widehat{x_i})}\circ \cdots\circ g_{\widehat{x_i}}(y)\| 
\end{split}
\end{equation*}

où $y$ est le point d'intersection entre le graphe de la forme $(\phi_0(Y),Y)$ précédent qui contient $C_{\gamma}^{-1}(\widehat{x_i})\tau_{x_i}^{-1}(z)$ et le plan complexe horizontal $(X,0)$ pour $\| X\|\leq hr(\widehat{x_i})$.

On obtient donc (voir le paragraphe \ref{point2}):

$$\| g_{\widehat{f}^l(\widehat{x_i})}\circ \cdots\circ g_{\widehat{x_i}}\circ C_{\gamma}^{-1}(\widehat{x_i})\tau_{x_i}^{-1}(z)\| \leq 4h\max(e^{-2\gamma l},e^{-2\gamma(n-l-1)}).$$

Par continuité cette inégalité est encore vraie si $z\in\overline{\mathcal{A}_i}$. 

Ainsi, toujours par le paragraphe \ref{point2},

$$\mathrm{dist}(f^l(z),f^l(x_i))\leq \frac{C(X)e^{2\gamma}4h}{r_0}\max(e^{-\gamma l},e^{-\gamma(n-l)})$$

car $\widehat{f}^n(\widehat{x_i})\in\xi(\widehat{x_i})\subset \Lambda_{\delta}$. 

Si $h$ est assez petit, on a donc:

$$\mathrm{dist}(f^l(z),f^l(x_i))\leq \frac{\epsilon}{4} \mbox{  pour  } l=0,\cdots,n.$$

Comme les points $x_i$ sont $(m,\epsilon)$-séparés, ils sont $(n,\epsilon)$-séparés car $n\geq m$. En particulier si $i\neq j$ il existe $l$ compris entre $0$ et $n$ avec $\mathrm{dist}(f^l(x_i),f^l(x_j))\geq \epsilon$. 

Si on avait un point $z\in \overline{\mathcal{A}_i}\cap \overline{\mathcal{A}_j}$ on aurait alors: 

$$\epsilon\leq \mathrm{dist}(f^l(x_i),f^l(x_j)) \leq \mathrm{dist}(f^l(x_i),f^l(z))+\mathrm{dist}(f^l(z),f^l(x_j)) \leq 2\times\frac{\epsilon}{4}$$

ce qui est absurde. 

Les ensembles $\overline{\mathcal{A}_i}$ sont donc bien disjoints et cela démontre le lemme.
\end{proof}

Les graphes qui constituent l'ensemble $\mathcal{A}_j^0$ vérifient les mêmes conditions qu'au départ. On peut donc recommencer le processus par rapport à  $\widehat{x_i}$, c'est-à-dire que l'on passe ces graphes dans le repère $C_{\gamma}^{-1}(\widehat{f}^n(\widehat{x_i}))E_u(\widehat{f}^n(\widehat{x_i}))\oplus C_{\gamma}^{-1}(\widehat{f}^n(\widehat{x_i}))E_s(\widehat{f}^n(\widehat{x_i}))$ puis on tire en arrière par $g_{\widehat{f}^{n-1}(\widehat{x_i})},\cdots,g_{\widehat{x_i}}$ et on repasse dans le repère initial, à  savoir $C_{\gamma}^{-1}(\widehat{y})E_u(\widehat{y})\oplus C_{\gamma}^{-1}(\widehat{y})E_s(\widehat{y})$. 

On obtient ainsi, en faisant varier $i$ et $j$, $N^2$ ensembles $\mathcal{A}_{ij}^0$ (avec le même type de notation que précédemment). 

On a alors: 
\begin{Lem}
$\mathcal{A}_{ij}^{0}\subset \mathcal{A}_i^0$ pour tout $i=1,\cdots,N$ et $j=1,\cdots,N.$
\end{Lem}

\begin{proof}

Soit $z\in \mathcal{A}_{ij}^{0}$. Par construction $z$ appartient à  un graphe $(\psi_0(Y),Y)$ au-dessus de $B_{k_2}(0,hr(\widehat{y})e^{\frac{\gamma}{2}})\subset C_{\gamma}^{-1}(\widehat{y})E_s(\widehat{y})$. 

Soit $B$ un plan complexe parallèle à $C_{\gamma}^{-1}(\widehat{y})E_u(\widehat{y})$ et qui passe par $z$.
 
$B$ est un graphe $(X,\phi(X))$ au-dessus de $B_{k_1}(0,hr(\widehat{y})e^{\frac{5 \gamma}{2}})$ avec $\phi$ constante (donc $\mathrm{Lip} (\phi)=0$) et $\| \phi(0)\|\leq hr(\widehat{y})e^{\frac{\gamma}{2}}$. 

Par le lemme \ref{lemme3.2} (en remplaçant $h$ par $e^{\frac{3\gamma}{2}}h$) l'image de $B$ par $C_{\gamma}^{-1}(\widehat{x_i})\tau_{x_i}^{-1}\tau_y C_{\gamma}(\widehat{y})$ est un graphe $B_0$ de la forme $(X,\phi_0(X))$ au moins pour $\| X\| \leq e^{\gamma}hr(\widehat{x_i})$ avec $\| \phi_0(0) \| \leq e^{\frac{3\gamma}{4}} hr(\widehat{x_i})$ et $\mathrm{Lip}(\phi_0) \leq\gamma_0$. 

Maintenant on peut pousser en avant ce graphe par $g_{\widehat{x_i}},\cdots,g_{\widehat{f}^{n-1}(\widehat{x_i})}$ comme dans le lemme \ref{lemme1} avec $e^{\gamma}h$ à la place de $h$. A chaque étape on fait aussi le cut-off (pour $\| X\|\leq he^{\gamma}r(\widehat{f}^l(\widehat{x_i}))$). 

A la fin on obtient un graphe $B_n$ de la forme $(X,\phi_n(X))$ au-dessus d'une partie de $C_{\gamma}^{-1}(\widehat{f}^n(\widehat{x_i}))E_u(\widehat{f}^n(\widehat{x_i}))$ qui contient $B(0,hr(\widehat{f}^n(\widehat{x_i}))e^{2\gamma})$ avec $\| \phi_n(0)\|\leq e^{-\frac{\gamma}{4}}hr(\widehat{f}^n(\widehat{x_i}))$ et $\mathrm{Lip}(\phi_n) \leq \gamma_0e^{-\gamma}$. 

Pour $l=0,\cdots,n$, notons: 

$$z_l=g_{\widehat{f}^{l-1}(\widehat{x_i})}\circ \cdots \circ g_{\widehat{x_i}}(C_{\gamma}^{-1}(\widehat{x_i})\tau_{x_i}^{-1}\tau_y C_{\gamma}(\widehat{y})(z)).$$

\noindent Par construction de $\mathcal{A}_{ij}^0$, $z_l$ est dans un graphe $(\psi_l'(Y),Y)$ avec au pire $\| Y\|\leq e^{\gamma} hr(\widehat{f}^l(\widehat{x_i}))$, $\| \psi_l'(0)\|\leq hr(\widehat{f}^l(\widehat{x_i}))$ et $\mathrm{Lip}( \psi_l') \leq \gamma_0$. 

On a donc: 

\begin{equation*}
\begin{split}
&\| \psi_l'(Y)\| \leq \| \psi_l'(0)\|+\| \psi_l'(Y)-\psi_l'(0)\| \\
& \leq hr(\widehat{f}^l(\widehat{x_i}))+\gamma_0 e^{\gamma} hr(\widehat{f}^l(\widehat{x_i})) \leq e^{\gamma}hr(\widehat{f}^l(\widehat{x_i})) \\ 
\end{split}
\end{equation*}

car $\gamma_0$ est petit par rapport à  $\gamma$.

Le point $z_l$ est donc dans la partie de $B_l$ que l'on garde quand on fait les cut-off. 

Maintenant, quand on prend l'image du dernier graphe $(X,\phi_n(X))$ par 

$C_{\gamma}^{-1}(\widehat{y})\tau_y^{-1}\tau_{f^n(x_i)}C_{\gamma}(\widehat{f}^n(\widehat{x_i}))$ on obtient un graphe $\widetilde{B}$ de la forme $(X,\widetilde{\phi}(X))$ au moins au-dessus de $B(0,hr(\widehat{y})e^{\gamma})$ avec  $\| \widetilde{\phi}(0)\|\leq hr(\widehat{y})$ et $\mathrm{Lip}(\widetilde{\phi}) \leq \gamma_0e^{-\frac{\gamma}{2}}$ (toujours par le lemme \ref{lemme3.2} avec $e^{\frac{3 \gamma}{4}}h$ au lieu de $h$).

Par définition de $\mathcal{A}_{ij}^0$, 

$$\widetilde{z_n}=C_{\gamma}^{-1}(\widehat{y})\tau_{y}^{-1}\tau_{f^n(x_i)} C_{\gamma}(\widehat{f}^n(\widehat{x_i}))(z_n) \in \mathcal{A}_j^0\subset \mathcal{A}^0.$$

Le point $\widetilde{z_n}$ appartient donc à un graphe $\widetilde{\Delta}$ qui constitue l'ensemble $\mathcal{A}^0$. Ce graphe $\widetilde{\Delta}$ fait partie des graphes que l'on utilise pour construire $\mathcal{A}_i^0$ via le processus qui utilise $g_{\widehat{x_i}},\cdots,g_{\widehat{f}^{n-1}(\widehat{x_i})}$. 

Soit $\Delta$ le graphe de $\mathcal{A}_i^0$ issu de $\widetilde{\Delta}$ via ce procédé. 

$\Delta$ est un graphe $(\psi(Y),Y)$ au-dessus de $B(0,hr(\widehat{y})e^{\frac{\gamma}{2}})\subset C_{\gamma}^{-1}(\widehat{y}) E_s(\widehat{y})$ avec $\| \psi(0)\|\leq e^{-\frac{\gamma}{2}}hr(\widehat{y})$ et $\mathrm{Lip}( \psi) \leq \gamma_0e^{-\frac{\gamma}{2}}$. $\Delta$ coupe donc $B$ en un unique point que l'on note $z'$. 

Montrons que $z=z'$: cela impliquera que $z\in \mathcal{A}_i^0$ et terminera la démonstration du lemme. 

Supposons que $z\neq z'$. 

Leurs images par $C_{\gamma}^{-1}(\widehat{x_i})\tau_{x_i}^{-1}\tau_y C_{\gamma}(\widehat{y})$ donnent deux points $z_0$ et $z_0'$ distincts de $B_0$. 

Comme précédemment les points $z_l'=g_{\widehat{f}^{l-1}(\widehat{x_i})}\circ \cdots\circ g_{\widehat{x_i}}(C_{\gamma}^{-1}(\widehat{x_i})\tau_{x_i}^{-1}\tau_{y} C_{\gamma}(\widehat{y})(z'))$ restent dans la partie de $B_l$ que l'on garde quand on fait le cut-off (par construction de $\mathcal{A}_i^0$).

De plus, par un résultat du même type que le lemme \ref{lemme4} on a: 

$$\| g_{\widehat{f}^{l-1}(\widehat{x_i})}(z_{l-1})-g_{\widehat{f}^{l-1}(\widehat{x_i})}(z_{l-1}')\| \geq e^{2\gamma}\| z_{l-1}-z_{l-1}'\| \mbox{  pour tout  } l=1,\cdots,n.$$

Par récurrence cela donne que pour $l=0,\cdots,n$ le point $z_l$ est distinct de $z_l'$. En particulier $z_n\neq z_n'$. Mais $C_{\gamma}^{-1}(\widehat{y})\tau_{y}^{-1}\tau_{f^n(x_i)} C_{\gamma}(\widehat{f}^n(\widehat{x_i}))(z_n')$ est dans $\widetilde{\Delta}\cap \widetilde{B}$ qui est réduit à $\widetilde{z_n}=C_{\gamma}^{-1}(\widehat{y})\tau_{y}^{-1}\tau_{f^n(x_i)} C_{\gamma}(\widehat{f}^n(\widehat{x_i}))(z_n)$. 

On obtient ainsi la contradiction recherchée. 
\end{proof}

A la deuxième génération on a donc construit $N^2$ ensembles $\mathcal{A}_{ij}^0$ avec $\mathcal{A}_{ij}^0\subset \mathcal{A}_i^0$ pour $j=1,\cdots,N$. On va maintenant itérer le processus. 

\noindent On part de $\mathcal{A}_{ij}^0$ qui rappelons le provient de $\mathcal{A}_j^0$ via le procédé qui utilise $g_{\widehat{x_i}},\cdots,g_{\widehat{f}^{n-1}(\widehat{x_i})}$. 

$\mathcal{A}_{j}^0$ contient $N$ ensembles $\mathcal{A}_{jl}^0$ pour $l=1,\cdots,N$. Si on applique le processus aux graphes de $\mathcal{A}_{jl}^0$ (via $g_{\widehat{x_i}},\cdots,g_{\widehat{f}^{n-1}(\widehat{x_i})}$), on obtient un ensemble que l'on note $\mathcal{A}_{ijl}^0$ ($l=1,\cdots,N$). Ils vérifient $\mathcal{A}_{ijl}^0\subset \mathcal{A}_{ij}^0$ par le même raisonnement que $\mathcal{A}_{ij}^0\subset \mathcal{A}_i^0$. 

Et on continue le procédé. 

A la $(l+1)$-ème génération on a $N^{l+1}$ ensembles $\mathcal{A}_{w_0\cdots w_{-l}}^0$ avec $w_i=1,\cdots,N$. 

$\mathcal{A}_{w_0\cdots w_{-l}}^0$ est constitué de graphes $(\psi(Y),Y)$ avec $\| Y\|\leq e^{\frac{\gamma}{2}}hr(\widehat{y})$, $\| \psi(0)\|\leq e^{-\frac{3\gamma}{4}}h r(\widehat{y})$ et $\mathrm{Lip}( \psi) \leq \gamma_0 e^{-\frac{\gamma}{2}}$. 

Ces graphes proviennent des graphes de $\mathcal{A}_{w_{-1}\cdots w_{-l}}^0$ via le procédé qui utilise $g_{\widehat{x_{w_0}}}$ , $\cdots$ ,$g_{\widehat{f}^{n-1}(\widehat{x_{w_0}})}$. 

Comme précédemment on a $\mathcal{A}_{w_{0}\cdots w_{-l}}^0\subset \mathcal{A}_{w_{0}\cdots w_{-l+1}}^0$ pour tout $w_0,\cdots,w_{-l}\in \{1,\cdots,N\}$ et $l\geq 0$. 

De plus on a: 

\begin{Lem}
Les $N^{l+1}$ ensembles $\overline{\mathcal{A}_{w_0\cdots w_{-l}}^0}$ sont disjoints. 
\end{Lem}

\begin{proof}

On démontre le lemme avec une récurrence sur $l$. 

Pour $l=0$, c'est déjà  fait. Supposons la propriété vraie au rang $l-1$ avec $l \geq 1$.

Au rang $l$ considérons deux ensembles $\overline{\mathcal{A}_{w_0\cdots w_{-l}}^0}$ et $\overline{\mathcal{A}_{w_0'\cdots w_{-l}'}^0}$ avec $(w_0,\cdots,w_{-l})\neq (w_0',\cdots,w_{-l}')$. 

On distingue les deux cas suivants: 

- si $(w_0,\cdots,w_{-l+1})\neq (w_0',\cdots,w_{-l+1}')$ alors on a $\overline{\mathcal{A}_{w_0\cdots w_{-l+1}}^0}$ et $\overline{\mathcal{A}_{w_0'\cdots w_{-l+1}'}^0}$ qui sont disjoints par hypothèse de récurrence . Comme $\overline{\mathcal{A}_{w_0\cdots w_{-l}}^0}\subset \overline{\mathcal{A}_{w_0\cdots w_{-l+1}}^0}$ et $\overline{\mathcal{A}_{w_0'\cdots  w_{-l}'}^0}\subset \overline{\mathcal{A}_{w_0'\cdots w_{-l+1}'}^0}$ on obtient que les deux ensembles $\overline{\mathcal{A}_{w_0\cdots w_{-l}}^0}$ et $\overline{\mathcal{A}_{w_0'\cdots w_{-l}'}^0}$ sont disjoints. 

- si $(w_0,\cdots,w_{-l+1})= (w_0',\cdots,w_{-l+1}')$ alors nécessairement $w_{-l}\neq w_{-l}'$. Par hypothèse de récurrence, $\overline{\mathcal{A}_{w_{-1} \cdots w_{-l}}^0}$ et $\overline{\mathcal{A}_{w_{-1}'\cdots w_{-l}'}^0}$ sont disjoints. 

Comme $\mathcal{A}_{w_0\cdots w_{-l}}^0$ vient de $\mathcal{A}_{w_{-1}\cdots w_{-l}}^0$ via le processus qui utilise $g_{\widehat{x_{w_0}}},\cdots,g_{\widehat{f}^{n-1}(\widehat{x_{w_0}})}$ et que $\mathcal{A}_{w_0'\cdots w_{-l}'}^0$ vient de $\mathcal{A}_{w_{-1}'\cdots w_{-l}'}^0$ via le même procédé car $w_0=w_0'$, on en déduit que $\overline{\mathcal{A}_{w_0\cdots w_{-l}}^0}$ est disjoint de $\overline{\mathcal{A}_{w_0'\cdots w_{-l}'}^0}$.

Cela termine la démonstration du lemme. 

\end{proof}

On veut maintenant évaluer la distance entre les graphes d'un même ensemble $\overline{\mathcal{A}_{w_0\cdots w_{-l}}^0}$ de la $(l+1)$-ème génération. 

Pour cela, soit $G_v$ l'ensemble des graphes de fonctions holomorphes $(\phi(Y),Y)$ au-dessus de $\overline{B_{k_2}(0,e^{\gamma/2}hr(\widehat{y}))}$ avec $ \|\phi(0)\|\leq e^{-\frac{\gamma}{2}}hr(\widehat{y})$ et $\mathrm{Lip}( \phi) \leq\gamma_0$.

Les graphes qui constituent les $\mathcal{A}_{w_0\cdots w_{-l}}^0$ sont dans $G_v$. 

Sur $G_v$ on met la métrique: 

$$\mathrm{d}(\mathrm{graphe} \mbox{    } \phi,\mathrm{graphe} \mbox{    } \psi)=\max_{\overline{B_{k_2}(0,e^{\gamma/2}hr(\widehat{y}))}}\| \phi-\psi\|.$$

On a alors: 

\begin{Lem}{\label{lemme3.6}}
Pour tout $l\geq 1$ et pour tout $w_0,\cdots,w_{-l+1}\in \{1,\cdots,N\}$ on a:

$$\forall A_l^1,A_l^2 \mbox{    } \mbox{  graphes qui constituent  }\ \mathcal{A}_{w_0\cdots w_{-l+1}}^0, \mbox{    }  \mathrm{d}(A_l^1,A_l^2)\leq 2he^{-2\gamma n l +2\gamma l}.$$

\end{Lem}

\begin{proof}

$A_l^1$ vient d'un graphe $A_{l-1}^1$ de $\mathcal{A}_{w_{-1} \cdots w_{-l+1}}^0$ qui vient d'un graphe $A_{l-2}^1$ de $\mathcal{A}_{w_{-2}\cdots w_{-l+1}}^0$, et ainsi de suite. On notera $A_{l-p}^1$ ($p=0,\cdots,l$) les graphes de $\mathcal{A}_{w_{-p}\cdots w_{-l+1}}^0$ qui font partie du processus pour construire $A_l^1$. On fait de même pour $A_l^2$. 

On a alors:

$$\forall p=1,\cdots,l,\mbox{    }  \mathrm{d}(A_p^1,A_p^2)\leq e^{-2\gamma n +2\gamma}\mathrm{d}(A_{p-1}^1,A_{p-1}^2).$$

En effet, pour passer de $A_{p-1}^1$ à  $A_p^1$ et de $A_{p-1}^2$ à  $A_p^2$ on prend tout d'abord les images de $A_{p-1}^1$ et $A_{p-1}^2$ par l'application $C_{\gamma}^{-1}(\widehat{f}^n(\widehat{x_{w_{-l+p}}}))\tau_{f^n(x_{w_{-l+p}})}^{-1}\tau_y C_{\gamma}(\widehat{y})$.

Mais on a vu dans la démonstration du lemme \ref{lemme6} que comme $\widehat{f}^n(\widehat{x_{w_{-l+p}}})$ est très proche de $\widehat{y}$, cette opération multiplie la distance entre les graphes $\mathrm{d}(A_{p-1}^1,A_{p-1}^2)$ par au plus $e^{\gamma}$. Ensuite, on tire en arrière par $g_{\widehat{f}^{n-1}(\widehat{x_{w_{-l+p}}})},\cdots,g_{\widehat{x_{w_{-l+p}}}}$. On a montré précédemment (voir la preuve du lemme \ref{lemme7}) qu'à chaque étape la distance entre les graphes est divisée par au moins $e^{2\gamma}$. 

Enfin à  la dernière étape on prend l'image par $C_{\gamma}^{-1}(\widehat{y})\tau_y^{-1}\tau_{x_{w_{-l+p}}}C_{\gamma}(\widehat{x_{w_{-l+p}}})$ et là  cela multiplie encore la distance par au plus $e^{\gamma}$. 

En appliquant l'inégalité: 

$$\forall p=1,\cdots,l,\mbox{    } \mathrm{d}(A_p^1,A_p^2)\leq e^{-2\gamma n +2\gamma}\mathrm{d}(A_{p-1}^1,A_{p-1}^2),$$

on obtient: 

$$ \mathrm{d}(A_l^1,A_l^2) \leq e^{-2\gamma n l +2\gamma l}\mathrm{d}(A_{0}^1,A_{0}^2) \leq e^{-2\gamma n l+2\gamma l}\times 2e^{-\frac{\gamma}{2}}hr(\widehat{y}) \leq 2he^{-2\gamma n l+2\gamma l} .$$

C'est ce que l'on voulait démontrer. 
\end{proof}

Grâce à  ce lemme on peut définir une application de $\{1,\cdots,N\}^{\mathbb{N}}$ dans $G_v$ de la façon suivante.

Soit $w=(w_0,w_{-1},\cdots,w_{-l},\cdots)\in \{1,\cdots,N\}^{\mathbb{N}}$. On prend $A_l$ un graphe qui constitue $\mathcal{A}_{w_0 w_{-1}\cdots w_{-l}}^0$ pour $l\geq 0$. 

$A_l$ est dans $G_v$ et par le théorème d'Ascoli, il existe une sous-suite $(A_{l_j})$ qui converge vers $A \in G_v$ (une limite uniforme de fonctions holomorphes est holomorphe). 

Montrons que $(A_l)_l$ converge aussi vers $A$. 

Soit $\beta>0$. Il existe $j_0$ tel que $\mathrm{d}(A,A_{l_j})<\frac{\beta}{2}$ si $j\geq j_0$. Ensuite

$$A_{l_j}^0 \subset \mathcal{A}_{w_0\cdots w_{-l_j}}^0 \subset \mathcal{A}_{w_0\cdots w_{-j}}^0$$

donc $\mathrm{d}(A_{l_j},A_j)\leq 2he^{-2\gamma n (j+1)+2\gamma (j+1)}$ en utilisant le lemme précédent. Cette quantité est plus petite que $\frac{\beta}{2}$ si $j\geq j_1$ (car $n\geq 3$ si on veut). 

Pour $j\geq \max(j_0,j_1)$ on a donc: 

$$\mathrm{d}(A_j,A)\leq \mathrm{d}(A_j,A_{l_j})+\mathrm{d}(A_{l_j},A)<\beta.$$

La suite $(A_j)$ converge vers $A$ et on a donc bien défini une application: 

\begin{center}
\begin{tabular}{cccc}
$\sigma_v:$ & $\{1,\cdots,N\}^{\mathbb{N}}$ & $\longrightarrow$ & $G_v$\\
             & $w$                         & $\longrightarrow$ & $A$ 
\end{tabular}
\end{center}

\begin{Rem}
L'application $\sigma_v$ est injective et on a même $\sigma_v(w)\cap \sigma_v(w')=\varnothing$ si $w\neq w'$. 

En effet, soit $w,w'\in \{1,\cdots,N\}^{\mathbb{N}}$ avec $w\neq w'$. On considère $l$ le plus petit entier avec $w_{-l}\neq w_{-l}'$. Soit $A_p\in \mathcal{A}_{w_0\cdots w_{-p}}^0$ et $A_p'\in \mathcal{A}_{w_0'\cdots w_{-p}'}^0$ pour $p\geq 0$ (on a donc $A_p\longrightarrow\sigma_v(w)$ et $A_p'\longrightarrow\sigma_v(w')$).  

Comme $w_{-l}\neq w_{-l}'$, les ensembles $\overline{\mathcal{A}_{w_0\cdots w_{-l}}^0}$ et $\overline{\mathcal{A}_{w_0'\cdots w_{-l}'}^0}$ sont disjoints.

Or pour tout $p\geq l$, $A_p \subset \mathcal{A}_{w_0\cdots w_{-p}}^0\subset\overline{\mathcal{A}_{w_0\cdots w_{-l}}^0}$ et $A_p' \subset \mathcal{A}_{w_0'\cdots w_{-p}'}^0\subset \overline{\mathcal{A}_{w_0'\cdots w_{-l}'}^0}$ donc $\sigma_v(w)\cap \sigma_v(w')=\varnothing$. En particulier $\sigma_v$ est injective.
\end{Rem}

On refait maintenant ce que l'on a réalisé avec des transformées de graphes en avant.

Comme précédemment on se place dans le repère $C_{\gamma}^{-1}(\widehat{y})E_u(\widehat{y})\oplus C_{\gamma}^{-1}(\widehat{y})E_s(\widehat{y})$ pour le $\widehat{y}\in \Lambda_{\delta}$ fixé. 

On part de l'ensemble des graphes de la forme $(X,\phi(X))$ au-dessus de $C_{\gamma}^{-1}(\widehat{y})E_u(\widehat{y})$ avec $\| X\|\leq hr(\widehat{y})e^{\frac{\gamma }{2}}$, $\phi(X)=\mathrm{constante}$, $\| \phi(0)\|\leq hr(\widehat{y})e^{-\frac{\gamma}{2}}$ et $\mathrm{Lip}( \phi)=0\leq \gamma_0e^{-\frac{\gamma}{2}}$. 

On notera $\mathcal{B}^0$ l'ensemble constitué par l'union de ces graphes. 

L'image de ces graphes par $C_i=C_{\gamma}^{-1}(\widehat{x_i})\tau_{x_i}^{-1}\tau_y C_{\gamma}(\widehat{y})$ (pour $i=1,\cdots,N$) sont des graphes $(X,\psi(X))$ au-dessus d'une partie de $C_{\gamma}^{-1}(\widehat{x_i})E_u(\widehat{x_i})$ pour au moins $\| X\|\leq hr(\widehat{x_i})$ avec $\| \psi(0)\|\leq hr(\widehat{x_i})$ et $\mathrm{Lip}( \psi) \leq \gamma_0$ par le lemme \ref{lemme3.2} car $\mathrm{dist}(\widehat{y},\widehat{x_i})<\eta$.

Ensuite on pousse en avant ces graphes $(X,\psi(X))$ par $g_{\widehat{x_i}},\cdots, g_{\widehat{f}^{n-1}(\widehat{x_i})}$. On obtient à  la fin des graphes $(X,\psi_n(X))$ au-dessus d'une partie de $C_{\gamma}^{-1}(\widehat{f}^n(\widehat{x_i}))E_u(\widehat{f}^n(\widehat{x_i}))$ pour au moins $\| X\|\leq e^{\gamma}hr(\widehat{f}^n(\widehat{x_i}))$ avec $\| \psi_n(0)\|\leq e^{-\gamma}hr(\widehat{f}^n(\widehat{x_i}))$ et $\mathrm{Lip}( \psi_n) \leq e^{-\gamma}\gamma_0$ (voir le lemme \ref{lemme1}).

Maintenant, on remet ces graphes dans le repère initial $C_{\gamma}^{-1}(\widehat{y})E_u(\widehat{y})\oplus C_{\gamma}^{-1}(\widehat{y})E_s(\widehat{y})$ c'est-à-dire que l'on prend leur image par $C_{\gamma}^{-1}(\widehat{y})\tau_y^{-1}\tau_{f^n(x_i)} C_{\gamma}(\widehat{f}^n(\widehat{x_i}))$. 

Comme $\mathrm{dist}(\widehat{f^n}(\widehat{x_i}),\widehat{y})<\eta$, on obtient des graphes $(X, \phi_n(X))$ pour au moins $\| X\|\leq e^{\frac{\gamma}{2}}hr(\widehat{y})$ avec $\| \phi_n(0)\| \leq e^{-\frac{3\gamma}{4}}hr(\widehat{y})$ et $\mathrm{Lip}( \phi_n) \leq e^{-\frac{\gamma}{2}}\gamma_0$ (toujours par le lemme \ref{lemme3.2}).

Ce sont exactement les mêmes conditions qu'au départ (c'est-à-dire que l'on pourra recommencer le procédé avec un $\widehat{x_j}$ pas nécessairement égal au $\widehat{x_i}$), sauf qu'en plus on a $\| \phi_n(0)\|\leq e^{-\frac{3\gamma}{4}}hr(\widehat{y})$. 

Grâce à  cette marge, si on prend $\| X\|\leq e^{\frac{\gamma}{2}}hr(\widehat{y})$ on a: 

$$\| \phi_n(X)\| \leq \| \phi_n(0)\|+ \| \phi_n(X)-\phi_n(0)\|\leq  e^{-\frac{3\gamma}{4}}hr(\widehat{y})+\gamma_0e^{-\frac{\gamma}{2}}\| X\| < hr(\widehat{y})e^{-\frac{\gamma}{2}}$$

c'est-à-dire que les graphes $(X,\phi_n(X))$ ne sortent de $B(0,e^{\frac{\gamma}{2}}hr(\widehat{y}))\times B(0,e^{-\frac{\gamma}{2}}hr(\widehat{y}))$ que par le bord $\partial B(0,e^{\frac{\gamma}{2}}hr(\widehat{y}))\times B(0,e^{-\frac{\gamma}{2}}hr(\widehat{y}))$. Ce sont des graphes horizontaux. 

Notons $\mathcal{B}_1^0,\cdots,\mathcal{B}_N^0$ les ensembles constitués de l'union de ces graphes obtenus pour $i=1,\cdots,N$. On a donc $\mathcal{B}_i^0\subset \mathcal{B}^0$. On notera aussi $\mathcal{B}_i$ l'image de $\mathcal{B}_i^0$ par $\tau_y\circ C_{\gamma}(\widehat{y})$. 

Ici les $\mathcal{B}_i^0$ ne sont pas forcément disjoints car $f$ n'est pas a priori injective. 

On recommence le processus. On fixe $i$ et $j$ dans $\{1,\cdots,N\}$ et on considère les graphes de $\mathcal{B}_j^0$. On les met dans le repère $C_{\gamma}^{-1}(\widehat{x_i})E_u(\widehat{x_i})\oplus C_{\gamma}^{-1}(\widehat{x_i})E_s(\widehat{x_i})$ en appliquant $C_i$, puis on les pousse en avant par $g_{\widehat{x_i}},\cdots,g_{\widehat{f}^{n-1}(\widehat{x_i})}$ et on repasse dans le repère initial. 

On note $\mathcal{B}_{ij}^0$ ($i,j=1,\cdots,N$) les $N^2$ ensembles ainsi obtenus. On a alors: 

\begin{Lem}
$\mathcal{B}_{ij}^0\subset \mathcal{B}_i^0$ pour tout $i,j=1,\cdots,N$.
\end{Lem}

\begin{proof}
Soit $z\in \mathcal{B}_{ij}^0$. Par construction $z$ est dans un graphe de $\mathcal{B}_{ij}^0$ qui provient d'un graphe de $\mathcal{B}_j^0$ via le processus qui utilise $g_{\widehat{x_i}},\cdots,g_{\widehat{f}^{n-1}(\widehat{x_i})}$. En particulier, 

$$z=C_{\gamma}^{-1}(\widehat{y})\tau_{y}^{-1}\tau_{f^n(x_i)}C_{\gamma}(\widehat{f}^n(\widehat{x_i}))g_{\widehat{f}^{n-1}(\widehat{x_i})}\circ \cdots\circ g_{\widehat{x_i}}\circ C_{\gamma}^{-1}(\widehat{x_i})\tau_{x_i}^{-1}\tau_y C_{\gamma}(\widehat{y})(z_0)$$

où $z_0$ est dans un graphe de $\mathcal{B}_j^0$. 

Cela signifie que $C_{\gamma}^{-1}(\widehat{x_i})\tau_{x_i}^{-1}\tau_y C_{\gamma}(\widehat{y})(z_0)$ et $g_{\widehat{f}^{l}(\widehat{x_i})}\circ \cdots\circ g_{\widehat{x_i}}\circ C_{\gamma}^{-1}(\widehat{x_i})\tau_{x_i}^{-1}\tau_y C_{\gamma}(\widehat{y})(z_0)$ (pour $l=0,\cdots,n-1$) et $z$ sont toujours dans la partie du cut-off que l'on garde. Maintenant $z_0\in \mathcal{B}_j^0\subset \mathcal{B}^0$ donc $z_0$ appartient à  un graphe de $\mathcal{B}^0$. Quand on applique le processus à  ce graphe de $\mathcal{B}^0$ via $g_{\widehat{x_i}}, \cdots, g_{\widehat{f}^{n-1}(\widehat{x_i})}$ on obtient à  la fin un graphe de $\mathcal{B}_i^0$ et le point $z$ est dedans car les images successives de $z_0$ restent dans la partie du cut-off que l'on garde par la remarque précédente. On a donc $z\in \mathcal{B}_i^0$ et le lemme est démontré. 
\end{proof}

A la deuxième génération on a donc construit $N^2$ ensembles $\mathcal{B}_{ij}^0$ avec $\mathcal{B}_{ij}^0\subset \mathcal{B}_i^0$. On va maintenant itérer le processus. 

$\mathcal{B}_j^0$ contient $N$ ensembles $\mathcal{B}_{jl}^0$ ($l=1,\cdots,N$). Si on applique le processus aux graphes de $\mathcal{B}_{jl}^0$ (via $g_{\widehat{x_i}},\cdots,g_{\widehat{f}^{n-1}(\widehat{x_i})}$), on obtient un ensemble que l'on note $\mathcal{B}_{ijl}^0$. Comme précédemment, on a $\mathcal{B}_{ijl}^0\subset \mathcal{B}_{ij}^0$ pour tout $i,j,l=1,\cdots,N$. Et on continue ainsi le procédé. 

A la $l$-ème génération on a $N^l$ ensembles $\mathcal{B}_{w_1\cdots w_l}^0$ avec $w_1,\cdots,w_l=1,\cdots,N$. 

$\mathcal{B}_{w_1 \cdots w_l}^0$ est constitué de graphes $(X,\phi(X))$ avec $\| X\|\leq e^{\frac{\gamma}{2}}hr(\widehat{y})$, $\| \phi(0)\|\leq e^{-\frac{\gamma}{2}}h r(\widehat{y})$ et $\mathrm{Lip}( \phi) \leq \gamma_0e^{-\frac{\gamma}{2}}$. Ces graphes proviennent de graphes de $\mathcal{B}_{w_2\cdots w_l}^0$ via le procédé qui utilise $g_{\widehat{x_{w_1}}},\cdots,g_{\widehat{f}^{n-1}(\widehat{x_{w_1}})}$. On a encore $\mathcal{B}_{w_1\cdots w_l}^0\subset \mathcal{B}_{w_1\cdots w_{l-1}}^0$ pour tout $l\geq 1$ et pour tout $w_1,\cdots,w_l=1,\cdots,N$. 

Comme expliqué plus haut, les $\mathcal{B}_{w_1\cdots w_l}^0$ ne sont pas nécessairement disjoints. 

On veut maintenant évaluer la distance entre les graphes qui constituent un même ensemble $\mathcal{B}_{w_1\cdots w_l}^0$. Pour cela, soit $G_h$ l'ensemble des graphes de fonctions holomorphes $(X,\phi(X))$ au-dessus de $\overline{B_{k_1}(0,hr(\widehat{y})e^{\gamma/2})}$ avec $\| \phi(0)\|\leq e^{-\gamma/2}hr(\widehat{y})$ et $\mathrm{Lip}( \phi) \leq \gamma_0$.

Sur $G_h$ on met la métrique: 

$$\mathrm{d}(\mathrm{graphe} \mbox{    } \phi,\mathrm{graphe} \mbox{    } \psi)=\max_{\overline{B_{k_1}(0,hr(\widehat{y})e^{\gamma/2})}}\| \phi-\psi\|.$$

On a alors: 
\begin{Lem}{\label{lemme3.8}}
Pour tout $l\geq 1$ et tout $w_1,\cdots,w_l=1,\cdots,N$ on a: 

$$\forall B_l^1,B_l^2 \mbox{    } \mbox{  graphes qui constituent  } \mathcal{B}_{w_1 \cdots w_l}^0,\mbox{    } \mathrm{d}(B_l^1,B_l^2)\leq 2he^{-2\gamma n l + 2\gamma l}.$$

\end{Lem}

\begin{proof}
$B_l^1$ vient d'un graphe $B_{l-1}^1$ de $\mathcal{B}_{w_2\cdots w_l}^0$ qui vient d'un graphe $B_{l-2}^1$ de $\mathcal{B}_{w_3\cdots w_l}^0$, et ainsi de suite. On note $B_p^1$ pour $p=0,\cdots,l$ les graphes de $\mathcal{B}_{w_{l-p+1}\cdots w_l}^0$ qui font partie du processus pour construire $B_l^1$. On fait de même pour $B_l^2$. 

On a alors pour $p=1,\cdots,l$: 

$$\mathrm{d}(B_p^1,B_p^2)\leq e^{-2\gamma n + 2\gamma}\mathrm{d}(B_{p-1}^1,B_{p-1}^2).$$

La démonstration est la même que dans le lemme \ref{lemme3.6}, en utilisant ce que l'on a fait dans la preuve du lemme \ref{lemme6}. 

En particulier on a maintenant: 

$$\mathrm{d}(B_l^1,B_l^2) \leq e^{-2\gamma n l+ 2\gamma l}\mathrm{d}(B_0^1,B_0^2) \leq 2e^{-\frac{\gamma}{2}}hr(\widehat{y})e^{-2\gamma n l + 2\gamma l} \leq 2he^{-2\gamma n l +2\gamma l}.$$

Cela démontre le lemme.
\end{proof}

Grâce à  ce lemme on peut définir une application de $\{1,\cdots,N\}^{\mathbb{N}}$ dans $G_h$ de la façon suivante. 

Soit $w=(w_1,\cdots,w_l,\cdots)\in \{1,\cdots,N\}^{\mathbb{N}}$. On prend $B_l$ un graphe qui constitue $\mathcal{B}_{w_1\cdots w_l}^0$ pour $l\geq 1$. $B_l$ est dans $G_h$ et par le théorème d'Ascoli, il existe une sous-suite $(B_{l_j})$ qui converge vers $B\in G_h$. Alors comme pour les $\mathcal{A}^0$, grâce au lemme précédent, on a aussi $(B_l)$ qui converge vers $B$. Cela définit une application $\sigma_h: \{1,\cdots,N\}^{\mathbb{N}} \longrightarrow G_v$ par $\sigma_h(w)=B$. 

Si $f$ n'est pas inversible, $\sigma_h$ n'est pas a priori injective. 

Par contre si $f$ est inversible et $\int \log  \mathrm{d}(x,I(f^{-1}))\mathrm{d}\mu(x)>-\infty$, comme dans le cas $\mathcal{A}^0$, $\sigma_h$ est injective (il suffit de prendre des $r(\widehat{x})$ adaptés à  la fois pour $f$ et $f^{-1}$). 

Voici maintenant le codage que l'on cherchait: 

Soit $w=(\cdots w_{-l},\cdots,w_0,w_1,\cdots,w_l,\cdots)\in \{1,\cdots,N\}^{\mathbb{Z}}$. 

$(\cdots,w_{-l},\cdots,w_0)$ définit un élément $A\in G_v$ (qui est égal à  $\sigma_v(w_0,w_{-1},\cdots,w_{-l},\cdots)$) et $(w_1,\cdots,w_l,\cdots)$ définit un $B\in G_h$ (qui est $\sigma_h(w_1,\cdots,w_l,\cdots)$). 

L'intersection de $A$ avec $B$ est composée d'un unique point (voir la démonstration de la proposition S.3.7 dans \cite{KH} car $\gamma_0^2 < 1$). 

On notera $S_0(w)$ l'image de ce point par $\tau_y C_{\gamma}(\widehat{y})$. On a obtenu ainsi une application $S_0: \{1,\cdots,N\}^{\mathbb{Z}}\longrightarrow X$. C'est le codage que l'on cherchait. Il s'agit maintenant de vérifier les propriétés du théorème \ref{theoreme2}.

\subsection{\bf{Propriétés du codage}}{\label{codage}}

Montrons tout d'abord que l'on a le diagramme suivant: 

$$
\xymatrix{
\{1,\cdots,N\}^\mathbb{Z}  \ar[r]^{\sigma}  \ar[d]_{S_0}  & \{1,\cdots,N\}^\mathbb{Z} \ar[d]^{S_0}\\
X \ar[r]^{f^n} & X
}
$$

avec $\sigma$ le décalage à  droite et $f^n\circ S_0=S_0\circ \sigma$. 

Soit $w=(\cdots,w_{-l},\cdots,w_{-1},w_0,w_1,\cdots)\in \{1,\cdots,N\}^{\mathbb{Z}}$. 

$S_0(w)=\tau_y C_{\gamma}(\widehat{y})(A\cap B)$ avec $A=\lim A_l^0$ et $B=\lim B_l^0$ où $A_l^0$ est un graphe de $\mathcal{A}_{w_0 w_{-1} \cdots w_{-l}}^0$ (pour tout $l\geq 0$) et $B_l^0$ est un graphe de $\mathcal{B}_{w_1\cdots w_l}^0$ (pour tout $l\geq 1$). 

Pour $l\geq 1$, on a par construction $C_{\gamma}^{-1}(\widehat{y})\tau_y^{-1}\tau_{f^n(x_{w_0})}C_{\gamma}(\widehat{f}^n(\widehat{x_{w_0}}))g_{\widehat{f}^{n-1}(\widehat{x_{w_0}})}\circ \cdots \circ g_{\widehat{x_{w_0}}}\circ C_{\gamma}^{-1}(\widehat{x_{w_0}})\tau_{x_{w_0}}^{-1}\tau_y C_{\gamma}(\widehat{y})(A_l^0)$ qui est inclus dans un graphe $A_l^1$ de $\mathcal{A}_{w_{-1}\cdots w_{-l}}^0$. 

$A_l^1$ converge donc vers $\sigma_v(w_{-1},\cdots,w_{-l},\cdots)$ quand $l\to+\infty$.

Maintenant les $g_{\widehat{f}^{p-1}(\widehat{x_{w_0}})}\circ \cdots \circ g_{\widehat{x_{w_0}}}\circ C_{\gamma}^{-1}(\widehat{x_{w_0}})\tau_{x_{w_0}}^{-1}\tau_y C_{\gamma}(\widehat{y})(A_l^0)$ sont dans des boîtes $\overline{B_{k_1}(0,e^{-\gamma/2}hr(\widehat{f}^p(\widehat{x_{w_0}})))}\times \overline{B_{k_2}(0,hr(\widehat{f}^p(\widehat{x_{w_0}})))}$ pour $p=1,\cdots,n-1$ grâce aux propriétés des graphes, $C_{\gamma}^{-1}(\widehat{x_{w_0}})\tau_{x_{w_0}}^{-1}\tau_y C_{\gamma}(\widehat{y})(A_l^0)$ est dans la boîte $\overline{B_{k_1}(0,e^{-\gamma/2}hr(\widehat{x_{w_0}}))}\times \overline{B_{k_2}(0,e^{\gamma}hr(\widehat{x_{w_0}}))}$ et pour $p=n$ le point précédent est dans $\overline{B_{k_1}(0,e^{\gamma/2}hr(\widehat{f}^n(\widehat{x_{w_0}})))}\times \overline{B_{k_2}(0,hr(\widehat{f}^n(\widehat{x_{w_0}})))}$ (toujours par les propriétés des graphes et $1+\gamma_0\leq e^{\frac{\gamma}{2}}$). 

En particulier, $g_{\widehat{f}^{p-1}(\widehat{x_{w_0}})}\circ \cdots\circ g_{\widehat{x_{w_0}}}C_{\gamma}^{-1}(\widehat{x_{w_0}})\circ \tau_{x_{w_0}}^{-1} \tau_y C_{\gamma}(\widehat{y})(A_l^0)$ vit dans un endroit où $g_{\widehat{f}^p(\widehat{x_{w_0}})}$ est holomorphe donc continue. On peut donc passer à  la limite quand $l \to+\infty$ et on obtient que 

\begin{equation*}
\begin{split}
C_{\gamma}^{-1}(\widehat{y})\tau_y^{-1}f^n(S_0(w)) & =C_{\gamma}^{-1}(\widehat{y})\tau_y^{-1}\tau_{f^n(x_{w_0})}C_{\gamma}(\widehat{f}^n(\widehat{x_{w_0}}))g_{\widehat{f}^{n-1}(\widehat{x_{w_0}})}\circ \cdots \\
&\circ g_{\widehat{x_{w_0}}}\circ C_{\gamma}^{-1}(\widehat{x_{w_0}})\tau_{x_{w_0}}^{-1}\tau_y C_{\gamma}(\widehat{y})(C_{\gamma}^{-1}(\widehat{y})\tau_{y}^{-1}(S_0(w)))\\
\end{split}
\end{equation*}

est dans $\sigma_v(w_{-1},\cdots,w_{-l},\cdots)$. 

Par ailleurs on obtient aussi que: 

$$g_{\widehat{f}^{p-1}(\widehat{x_{w_0}})}\circ \cdots\circ g_{\widehat{x_{w_0}}}\circ C_{\gamma}^{-1}(\widehat{x_{w_0}})\tau_{x_{w_0}}^{-1}\tau_y C_{\gamma}(\widehat{y})(C_{\gamma}^{-1}(\widehat{y})\tau_{y}^{-1}(S_0(w)))$$

\noindent se trouve dans $\overline{B_{k_1}(0,e^{-\gamma/2}hr(\widehat{f}^p(\widehat{x_{w_0}})))}\times \overline{B_{k_2}(0,hr(\widehat{f}^p(\widehat{x_{w_0}})))}$ pour $p=1,\cdots,n-1$, dans 
$\overline{B_{k_1}(0,e^{-\gamma/2}hr(\widehat{x_{w_0}}))}\times \overline{B_{k_2}(0,e^{\gamma}hr(\widehat{x_{w_0}}))}$ pour $p=0$ et dans $\overline{B_{k_1}(0,e^{\gamma/2}hr(\widehat{f}^n(\widehat{x_{w_0}})))}\times \overline{B_{k_2}(0,hr(\widehat{f}^n(\widehat{x_{w_0}})))}$ pour $p=n$.

Soit $(b_l)$ une suite de $B_l^0$ qui converge vers $C_{\gamma}^{-1}(\widehat{y})\tau_y^{-1}(S_0(w))$. Par continuité pour $l$ assez grand on a: $g_{\widehat{f}^{p-1}(\widehat{x_{w_0}})}\circ \cdots\circ g_{\widehat{x_{w_0}}}\circ C_{\gamma}^{-1}(\widehat{x_{w_0}})\tau_{x_{w_0}}^{-1}\tau_y C_{\gamma}(\widehat{y})(b_l)$ qui est au-dessus de $\overline{B_{k_1}(0,hr(\widehat{f}^p(\widehat{x_{w_0}})))}$ pour $p=0,\cdots,n-1$ et au-dessus de $\overline{B_{k_1}(0,e^{\gamma}hr(\widehat{f}^n(\widehat{x_{w_0}})))}$ pour $p=n$: ce sont donc des points qui sont dans la partie du cut-off que l'on garde quand on passe de $B_l^0$ à  son successeur via le processus qui passe de $g_{\widehat{x_{w_0}}},\cdots,g_{\widehat{f}^{n-1}(\widehat{x_{w_0}})}$. Le successeur de $B_l^0$ est par définition un graphe $B_l^1$ qui constitue $\mathcal{B}_{w_0 w_1\cdots w_l}^0$. 

On a $B_l^1\to\sigma_h(w_0,w_1,\cdots w_l,\cdots)$ et ainsi

\begin{equation*}
\begin{split}
C_{\gamma}^{-1}(\widehat{y}) \tau_y^{-1}f^n(S_0(w))& =C_{\gamma}^{-1}(\widehat{y})\tau_y^{-1}\tau_{f^n(x_{w_0})}C_{\gamma}(\widehat{f}^n(\widehat{x_{w_0}}))g_{\widehat{f}^{n-1}(\widehat{x_{w_0}})}\circ \cdots \\
&\circ g_{\widehat{x_{w_0}}}\circ C_{\gamma}^{-1}(\widehat{x_{w_0}})\tau_{x_{w_0}}^{-1}\tau_y C_{\gamma}(\widehat{y})(C_{\gamma}^{-1}(\widehat{y})\tau_{y}^{-1}(S_0(w))) \\
\end{split}
\end{equation*}

est dans $\sigma_h(w_{0},w_1, \cdots,w_{l},\cdots)$.

Si on fait le bilan on a que $C_{\gamma}^{-1}(\widehat{y}) \tau_y^{-1}f^n(S_0(w))$ est l'unique point de 

$$\sigma_v(w_{-1},\cdots,w_{-l}, \cdots)   \cap \sigma_h(w_{0},w_1, \cdots,w_{l},\cdots).$$ 

En particulier cela signifie que: 

$$f^n(S_0(w))=\tau_y C_{\gamma}(\widehat{y})(\sigma_v(w_{-1},\cdots)\cap \sigma_h(w_0,\cdots))=S_0(\sigma(w))$$

où $\sigma$ est le décalage à  droite. C'est bien ce que l'on voulait démontrer. 

Montrons maintenant que $S_0$ est continue. Sur $\{1,\cdots,N\}^{\mathbb{Z}}$ on met la métrique: 

$$\mathrm{d}(w,w')=\sum_{i=1}^{+\infty}\frac{| w_i-w_i'|}{2^i}+\sum_{i=-\infty}^{0}\frac{| w_i-w_i'|}{2^{-i}}
.$$

Soit $\beta>0$. Si $p$ est suffisamment grand, on a $8he^{-2\gamma n p + 2\gamma p} \leq \frac{\beta r_0}{C(X)}$. 

Maintenant, si $\mathrm{d}(w,w')<\frac{1}{2^{p+1}}$, on a forcément 

$$w_0=w_0',\cdots,w_{-p}=w_{-p}',w_1=w_1',\cdots,w_p=w_p'.$$

$\sigma_v(w_0, w_{-1}, \cdots, w_{-l},\cdots)$ est limite de $A_l^1$ avec $A_l^1$ graphe de $\mathcal{A}_{w_0 w_{-1} \cdots w_{-l}}^0$ pour $l\geq 0$. 

Pour $l\geq p$ on a $A_l^1 \subset \mathcal{A}_{w_0\cdots w_{-l}}^0 \subset \mathcal{A}_{w_0\cdots w_{-p}}^0$. En prenant la limite quand $l$ tend vers $+ \infty$, on obtient que $\sigma_v(w_0, w_{-1}, \cdots)\subset \overline{\mathcal{A}_{w_0 \cdots w_{-p}}^0}$. Le point $C_{\gamma}^{-1}(\widehat{y})\tau_y^{-1}(S_0(w))$ est donc dans $\overline{\mathcal{A}_{w_0\cdots w_{-p}}^0}$. De même $C_{\gamma}^{-1}(\widehat{y})\tau_y^{-1}(S_0(w'))$ est dans $\overline{\mathcal{A}_{w_0' \cdots w_{-p}'}^0}$ qui est égal à  $\overline{\mathcal{A}_{w_0\cdots w_{-p}}^0}$. 

Par un raisonnement similaire, $C_{\gamma}^{-1}(\widehat{y})\tau_y^{-1}(S_0(w))$ et $C_{\gamma}^{-1}(\widehat{y})\tau_y^{-1}(S_0(w'))$ sont dans $\overline{\mathcal{B}_{w_1\cdots w_p}^0}$. 

Comme $C_{\gamma}^{-1}(\widehat{y})\tau_y^{-1}(S_0(w))$ est dans $\overline{\mathcal{A}_{w_0\cdots w_{-p}}^0}$, il existe une suite de points de $\mathcal{A}_{w_0\cdots w_{-p}}^0$ qui converge vers lui. Par définition la suite de points s'écrit $(\phi_l^1(Y_l),Y_l)$ où $\| Y_l\|\leq e^{\frac{\gamma}{2}}hr(\widehat{y})$, $\mathrm{Lip}( \phi_l^1) \leq \gamma_0$ et $\| \phi_l^1(0)\|\leq e^{-\frac{\gamma}{2}}h r(\widehat{y})$. 

Par le théorème d'Ascoli, il existe une sous-suite $(\phi_{l_j}^1)$ qui converge uniformément vers $\phi^1$ qui est dans $G_v$. Par ailleurs, si $C_{\gamma}^{-1}(\widehat{y}) \tau_y^{-1}(S_0(w))=(X,Y)$, on a $Y_l\to Y$ et $\| \phi_{l_j}^1(Y)-\phi_{l_j}^1(Y_{l_j})\| \leq \gamma_0 \| Y-Y_{l_j}\| \to 0$ donc $\phi^1(Y)=X$ c'est-à-dire que $C_{\gamma}^{-1}(\widehat{y}) \tau_y^{-1}(S_0(w))$ appartient au graphe $(\phi^1(Y),Y)$ de $G_v$. 

De même, $C_{\gamma}^{-1}(\widehat{y}) \tau_y^{-1}(S_0(w'))$ appartient à  un graphe $(\phi^2(Y),Y)$ construit par le même procédé. 

Par le lemme \ref{lemme3.6}, on a $\mathrm{d}(\mathrm{graphe} \mbox{    }\phi_l^1,\mathrm{graphe} \mbox{    }\phi_l^2)\leq 2he^{-2\gamma n (p+1)+2\gamma (p+1)}$ et ainsi, en passant à la limite sur $l$, on a $\mathrm{d}(\mathrm{graphe} \mbox{    }\phi^1,\mathrm{graphe} \mbox{    }\phi^2)\leq 2he^{-2\gamma n (p+1)+2\gamma (p+1)}$. 

De la même façon, $C_{\gamma}^{-1}(\widehat{y})\tau_y^{-1}(S_0(w))$ appartient à  un graphe $(X,\psi^1(X))$ de $G_h$ (limite de graphes de $\mathcal{B}_{w_1\cdots w_p}^0$ et $C_{\gamma}^{-1}(\widehat{y}) \tau_y^{-1}(S_0(w'))$ est dans un graphe $(X,\psi^2(X)$ de $G_h$. On a aussi, en utilisant cette fois-ci le lemme \ref{lemme3.8} que  $\mathrm{d}(\mathrm{graphe} \mbox{    }\psi^1,\mathrm{graphe} \mbox{    }\psi^2)\leq 2he^{-2\gamma n p + 2\gamma p}$. 

Le point d'intersection $C_{\gamma}^{-1}(\widehat{y})\tau_y^{-1}(S_0(w))$ entre les graphes $(\phi^1(Y),Y)$ et $(X,\psi^1(X))$ est de la forme $(\phi^1(Y_1),Y_1)$ où $Y_1$ est l'unique point fixe de $\psi^1 \circ \phi^1 : \overline{B_{k_2}(0, e^{\gamma / 2}h r(\widehat{y}))} \longrightarrow \overline{B_{k_2}(0, e^{\gamma / 2}h r(\widehat{y}))}$ (voir la démonstration de la proposition S.3.7 dans \cite{KH} car on a $\gamma_0^2 < 1$). De même le point d'intersection $C_{\gamma}^{-1}(\widehat{y}) \tau_y^{-1}(S_0(w'))$ entre les graphes $(\phi^2(Y),Y)$ et $(X,\psi^2(X))$ est de la forme $(\phi^2(Y_2),Y_2)$ où $Y_2$ est l'unique point fixe de $\psi^2 \circ \phi^2$. On a alors

\begin{equation*}
\begin{split}
\|Y_1 - Y_2 \| &=\| \psi^1 \circ \phi^1 (Y_1) - \psi^2 \circ \phi^2 (Y_2) \| \\
& \leq \| \psi^1 \circ \phi^1 (Y_1) -  \psi^2 \circ \phi^1 (Y_1) \| + \|  \psi^2 \circ \phi^1 (Y_1) -  \psi^2 \circ \phi^2 (Y_2) \| \\
& \leq \mathrm{d}(\mathrm{graphe} \mbox{    } \psi^1,\mathrm{graphe} \mbox{    } \psi^2) + \gamma_0 \| \phi^1 (Y_1) - \phi^2 (Y_2) \| \\
& \leq \mathrm{d}(\mathrm{graphe} \mbox{    } \psi^1,\mathrm{graphe} \mbox{    } \psi^2) + \gamma_0 \left( \| \phi^1 (Y_1) - \phi^2(Y_1) \| + \| \phi^2(Y_1) - \phi^2 (Y_2) \| \right) \\
& \leq \mathrm{d}(\mathrm{graphe} \mbox{    } \psi^1,\mathrm{graphe} \mbox{    } \psi^2) + \gamma_0 \mathrm{d}(\mathrm{graphe} \mbox{    } \phi^1,\mathrm{graphe} \mbox{    } \phi^2) + \gamma_0^2 \| Y_1 - Y_2 \|.
\end{split}
\end{equation*}

D'où

$$ \|Y_1 - Y_2 \| \leq \frac{\mathrm{d}(\mathrm{graphe} \mbox{    } \psi^1,\mathrm{graphe} \mbox{    } \psi^2) + \gamma_0 \mathrm{d}(\mathrm{graphe} \mbox{    } \phi^1,\mathrm{graphe} \mbox{    } \phi^2)}{1 - \gamma_0^2}.$$

De là on en déduit que

\begin{equation*}
\begin{split}
 & \| C_{\gamma}^{-1}(\widehat{y})\tau_y^{-1}S_0(w) - C_{\gamma}^{-1}(\widehat{y})\tau_y^{-1}S_0(w') \|  \leq  \|Y_1 - Y_2 \| + \| \phi^1(Y_1)- \phi^2(Y_2) \| \\
& \leq \|Y_1 - Y_2 \| + \| \phi^1(Y_1)- \phi^2(Y_1) \| + \| \phi^2(Y_1) - \phi^2(Y_2) \| \\
& \leq (1+ \gamma_0) \|Y_1 - Y_2 \| +  \mathrm{d}(\mathrm{graphe} \mbox{    } \phi^1,\mathrm{graphe} \mbox{    } \phi^2) \\
&\leq \frac{1}{1 - \gamma_0}(\mathrm{d}(\mathrm{graphe} \mbox{    } \psi^1,\mathrm{graphe} \mbox{    } \psi^2) + \mathrm{d}(\mathrm{graphe} \mbox{    } \phi^1,\mathrm{graphe} \mbox{    } \phi^2)).
\end{split}
\end{equation*}

Autrement dit, 

$$ \| C_{\gamma}^{-1}(\widehat{y})\tau_y^{-1}S_0(w) - C_{\gamma}^{-1}(\widehat{y})\tau_y^{-1}S_0(w') \|  \leq 8he^{-2\gamma n p + 2\gamma p}  \leq \frac{\beta r_0}{C(X)}$$

et par le théorème des accroissements finis, comme $\| C_{\gamma}(\widehat{y}) \| \leq\frac{1}{r_0}$, on a finalement 

$$\mathrm{dist}(S_0(w),S_0(w'))\leq \frac{C(X)}{r_0}\times \frac{\beta r_0}{C(X)}=\beta.$$

On a donc montré que $S_0$ est uniformément continue. 

\begin{Rem}
Lorsque $f$ est inversible et que l'on a en plus $\int \log\mathrm{d}(x,I(f^{-1}))\mathrm{d}\mu(x) > -\infty$, $S_0$ est un homéomorphisme de $\{1,\cdots,N\}^{\mathbb{Z}}$ sur son image. 

En effet, tout d'abord $S_0$ est injective: si $w,w' \in \{1,\cdots,N\}^{\mathbb{Z}}$ avec $w\neq w'$ on a soit $(\cdots,w_{-l},\cdots,w_0)\neq (\cdots,w_{-l}',\cdots,w_0')$, soit $(w_1,\cdots,w_l,\cdots)\neq (w_1',\cdots,w_l',\cdots)$. 

Dans le premier cas $C_{\gamma}^{-1}(\widehat{y})\tau_y^{-1}S_0(w)$ et $C_{\gamma}^{-1}(\widehat{y})\tau_y^{-1}S_0(w')$ sont respectivement dans $\sigma_v(w_0,\cdots,w_{-l},\cdots)$ et $\sigma_v(w_0',\cdots,w_{-l}',\cdots)$ qui sont disjoints. On a donc $S_0(w)\neq S_0(w')$. 

Dans le second cas, comme $f$ est inversible et que $\int \log \mathrm{d}(x,I(f^{-1})) \mathrm{d}\mu (x)>-\infty$, on a aussi $\sigma_h(w_1,\cdots,w_l,\cdots)$ et $\sigma_h(w_1',\cdots,w_l',\cdots)$ qui sont disjoints et ainsi $S_0(w)\neq S_0(w')$.

Montrons, toujours sous l'hypothèse $f$ inversible et $\int \log \mathrm{d}(x,I(f^{-1})) \mathrm{d}\mu (x)>-\infty$, que $S_0^{-1}$ est continue sur $\Lambda_0=S_0(\{1,\cdots,N\})^{\mathbb{Z}}$. 

Soit $\beta>0$. Si $p$ est suffisamment grand on a $\frac{4N}{2^{p+1}}<\beta$. On a vu que les $\overline{\mathcal{A}_{w_0\cdots w_{-p}}}$ sont disjoints. Comme $f$ est inversible et $\int \log \mathrm{d}(x,I(f^{-1})) \mathrm{d}\mu (x)>-\infty$, il en est de même pour les $\overline{\mathcal{B}_{w_1\cdots w_p}}$. Soit donc $\alpha>0$ plus petit que la distance minimale entre deux $\overline{\mathcal{A}_{w_0\cdots w_{-p}}}$ et que la distance minimale entre deux $\overline{\mathcal{B}_{w_1 \cdots w_{p}}}$. 

Soit $z$ et $z'\in \Lambda_0$ avec $\mathrm{dist}(z,z') < \frac{\alpha r_0}{C(X)}$. 

Comme on l'a vu quand on montrait que $S_0$ est continue, $C_{\gamma}^{-1}(\widehat{y})\tau_y^{-1}(z)\in \overline{\mathcal{A}_{w_0\cdots w_{-p}}^0}\cap \overline{\mathcal{B}_{w_1\cdots w_{p}}^0}$ où $S_{0}^{-1}(z)=(\cdots,w_{-p},\cdots,w_0,\cdots,w_p,\cdots)$. De même, $C_{\gamma}^{-1}(\widehat{y})\tau_y^{-1}(z')\in \overline{\mathcal{A}_{w_0'\cdots w_{-p}'}^0}\cap \overline{\mathcal{B}_{w_1'\cdots w_{p}'}^0}$ où $S_{0}^{-1}(z')=(\cdots,w_{-p}',\cdots,w_0',\cdots,w_p',\cdots)$. 

Par le théorème des accroissement finis, 

$$\| C_{\gamma}^{-1}(\widehat{y})\tau_y^{-1}(z) - C_{\gamma}^{-1}(\widehat{y})\tau_y^{-1}(z') \|<\frac{C(X)}{r_0}\times \frac{\alpha r_0}{C(X)}=\alpha$$

car $\| C_{\gamma}^{-1}(\widehat{y})\|\leq \frac{1}{r_0}$.

On en déduit que $C_{\gamma}^{-1}(\widehat{y})\tau_y^{-1}(z)$ et $C_{\gamma}^{-1}(\widehat{y})\tau_y^{-1}(z')$ ne peuvent pas être dans deux $\overline{\mathcal{A}_{w_0\cdots w_{-p}}^0}$ ou dans deux $\overline{\mathcal{B}_{w_0\cdots w_{-p}}^0}$ différents. 

On a donc $w_0=w_0',\cdots,w_{-p}=w_{-p}'$ et $w_1=w_1',\cdots,w_{p}=w_{p}'$. 

En particulier 

\begin{equation*}
\begin{split}
\mathrm{dist}(S_0^{-1}(z),S_0^{-1}(z')) & = \sum_{i=1}^{+\infty}\frac{| w_i-w_i'|}{2^i}+\sum_{i=-\infty}^{0}\frac{| w_i-w_i'|}{2^{-i}} \\
& \leq \sum_{i=p+1}^{+\infty}\frac{N}{2^i}+\sum_{i=-\infty}^{-p-1}\frac{N}{2^{-i}} \\
& \leq \frac{N}{2^{p+1}}\times 2+\frac{N}{2^{p+1}}\times 2\leq \frac{4N}{2^{p+1}}<\beta.\\ 
\end{split}
\end{equation*}

L'application $S_0^{-1}$ est donc uniformément continue sur $\Lambda_0$. 
\end{Rem}

Revenons au cas général. 

Pour terminer la démonstration du premier point du théorème \ref{theoreme2} il reste à  étudier la mesure $\nu_0=(S_0)_*\lambda_0$ (où $\lambda_0$ est la mesure de Bernoulli sur $\{1,\cdots,N\}^\mathbb{Z}$). On a

$$(f^n)_* \nu_0=(f^n)_* (S_0)_*\lambda_0=(f^n\circ S_0)_*\lambda_0=(S_0\circ \sigma)_*\lambda_0=(S_0)_*\lambda_0=\nu_0.$$

C'est donc une mesure $(f^n)$-invariante. Calculons maintenant $h_{\nu_0}(f^n)$. 

Soit $\alpha>0$ la distance minimale entre deux $\overline{\mathcal{A}_i}$ pour $i=1,\cdots,N$. On a 

\begin{Lem}
Pour tout $z_1\in\overline{\mathcal{A}_{w_0\cdots w_{-l}}}$ et $z_2\in\overline{\mathcal{A}_{w_0'\cdots w_{-l}'}}$ avec $(w_0,\cdots,w_{-l})\neq (w_0',\cdots,w_{-l}')$, les points $z_1$ et $z_2$ sont $(l+1,\alpha)$-séparés pour $f^n$.
\end{Lem}

\begin{proof}

On considère deux points $z_1,z_2$ comme ci-dessus et soit $p$ le plus petit entier avec $w_{-p}\neq w_{-p}'$ ($0\leq p\leq l$).

On a vu au début du paragraphe \ref{codage} que 

$$C_{\gamma}^{-1}(\widehat{y})\tau_y^{-1}f^n \tau_y C_{\gamma}(\widehat{y})(\mathcal{A}_{w_0\cdots w_{-l}}^0)\subset \mathcal{A}_{w_{-1}\cdots w_{-l}}^0$$

c'est-à-dire $f^n(\mathcal{A}_{w_0\cdots w_{-l}})\subset \mathcal{A}_{w_{-1}\cdots w_{-l}}$. 

Ainsi $f^{np}(\mathcal{A}_{w_0\cdots w_{-l}})\subset \mathcal{A}_{w_{-p}\cdots w_{-l}}\subset \mathcal{A}_{w_{-p}}$ et de la même façon $f^{np}(\mathcal{A}_{w_0'\cdots w_{-l}'})\subset \mathcal{A}_{w_{-p}'\cdots w_{-l}'}\subset \mathcal{A}_{w_{-p}'}$. Par continuité $f^{np}(\overline{\mathcal{A}_{w_0\cdots w_{-l}}})\subset \overline{\mathcal{A}_{w_{-p}}}$ et $f^{np}(\overline{\mathcal{A}_{w_0'\cdots w_{-l}'}})\subset \overline{\mathcal{A}_{w_{-p}'}}$. 

On a donc $\mathrm{dist}(f^{np}(z_1),f^{np}(z_2))\geq \alpha$ et c'est ce que l'on voulait démontrer.

\end{proof}

Si $w\in \{1,\cdots,N\}^{\mathbb{Z}}$, on a vu que pour $l \geq 0$, on a $S_0(w)\in \overline{\mathcal{A}_{w_0\cdots w_{-l}}}$. 

Le support de $\nu_0$ est donc inclus dans 

$$\bigcup_{w_0,\cdots,w_{-l}=1,\cdots,N}\overline{\mathcal{A}_{w_0\cdots w_{-l}}}.$$. 

De plus,

\begin{equation*}
\begin{split}
\nu_0(\overline{\mathcal{A}_{w_0\cdots w_{-l}}}) & = \lambda_0(S_0^{-1}(\overline{\mathcal{A}_{w_0\cdots w_{-l}}})) \\
 & = \lambda_0(\{w',w_0'=w_0,\cdots,w_{-l}'=w_{-l}\}) \mbox{    } \mbox{  car les  } \overline{\mathcal{A}_{w_0\cdots w_{-l}}} \mbox{  sont disjoints} \\
 & = \left( \frac{1}{N} \right)^{l+1}. \\
\end{split}
\end{equation*}

Soit $z\in \bigcup_{w_0,\cdots,w_{-l}=1,\cdots,N}\overline{\mathcal{A}_{w_0\cdots w_{-l}}}$. On a $z\in \overline{\mathcal{A}_{w_0\cdots w_{-l}}}$ pour un certain $(w_0,\cdots,w_{-l})\in \{1,\cdots,N\}^{l+1}$ et donc, par le lemme précédent, si on note 

$$B_{l+1} ( f^n,\alpha,z)= \{ y \in X \mbox{  ,  } dist(f^{np}(z),f^{np}(y)) < \alpha \mbox{  pour  } p=0, \cdots, l \},$$

on a

\begin{equation*}
\begin{split}
\nu_0(B_{l+1}(f^n,\alpha,z)) & =\nu_0 \left( B_{l+1} ( f^n,\alpha,z)\cap \bigcup_{w_0,\cdots,w_{-l}=1,\cdots,N}\overline{\mathcal{A}_{w_0\cdots w_{-l}}} \right) \\
 & = \nu_0(B_{l+1}(f^n,\alpha,z)\cap \overline{\mathcal{A}_{w_0\cdots w_{-l}}})\leq \left( \frac{1}{N} \right)^{l+1}. \\
\end{split}
\end{equation*}

Par le théorème de Brin-Katok, on a: 

$$h_{\nu_0}(f^n)=\lim_{\alpha\to 0} \limsup_{l \rightarrow + \infty} -\frac{1}{l+1}\log \nu_0 (B_{l+1}(f^n,\alpha,z))$$

pour $\nu_0$ presque tout point (car comme $\lambda_0$ est ergodique, $\nu_0$ l'est aussi grâce à  la relation $f^n\circ S_0=S_0\circ \sigma$). 

Si $z\in \bigcap_{l\geq 0} \bigcup_{w_0,\cdots,w_{-l}=1,\cdots,N}\overline{\mathcal{A}_{w_0\cdots w_{-l}}}$ (qui est de masse $1$ pour $\nu_0$), on a pour tout $l\geq 0$: 

$$-\frac{1}{l+1}\log \nu_0 (B_{l+1}(f^n,\alpha,z))\geq -\log \frac{1}{N}=\log N,$$

ce qui implique $h_{\nu_0}(f^n)\geq \log N=h_{\mu}(f)n-\rho n$. C'est que l'on voulait démontrer. 

Il reste à  démontrer les points $2$ et $3$ du théorème \ref{theoreme2}.

Tout d'abord, $h_{top}(f_{| \Lambda_0})\geq h_{top}((f^n)_{| \Lambda_0})\times \frac{1}{n}$ car un ensemble $(l,\alpha)$-séparé pour $f^n$ est $(ln-n+1,\alpha)$-séparé pour $f$. 

Ensuite on a (par exemple en utilisant le théorème de Brin-Katok), $h_{top}((f^n)_{| \Lambda_0})\geq h_{\nu_0}(f^n)\geq h_{\mu}(f)n-\rho n$. On a donc $h_{top}(f_{| \Lambda_0})\geq h_{\mu}(f)-\rho $. C'est ce que l'on voulait.

Passons au dernier point. 

On note $\Lambda=\Lambda_0\cup f(\Lambda_0)\cup \cdots \cup f^{n-1}(\Lambda_0)$. 

Soit $z\in \Lambda_0$. On a $z=S_0(w)$ avec $w=(\cdots,w_{-l},\cdots,w_0,\cdots,w_l,\cdots) \in \{1,\cdots,N\}^{\mathbb{Z}}$. 

$z\in \overline{\mathcal{A}_{w_0}}$ et on a vu dans la démonstration du lemme \ref{lemme3.3} que $\mathrm{dist}(f^l(z),f^l(x_{w_0}))\leq \frac{\epsilon}{4}$ pour $l=0,\cdots,n$. Le point $x_{w_0}$ est dans le support de $\mu$ si on veut (on peut toujours intersecter $\pi(\Lambda_{\delta,m_0'})\cap \Gamma_{\epsilon,m_0}$ par le support de $\mu$ si on le souhaite). 

Cela signifie que $\Lambda$ est inclus dans un $\frac{\epsilon}{4}$-voisinage de support de $\mu$. Si on prend $\frac{\epsilon}{4}<\rho$, on a donc la première partie du point $3$. 

Si $z\in \Lambda_0$, on a $z=S_0(w)$ et on a vu aussi que $g_{\widehat{f}^l(\widehat{x_{w_0}})}$ est continue au voisinage de $g_{\widehat{f}^{l-1}(\widehat{x_{w_0}})}\circ \cdots \circ g_{\widehat{x_{w_0}}}\circ C_{\gamma}^{-1}(\widehat{x_{w_0}})\tau_{x_{w_0}}^{-1}(z)$ pour $l=0,\cdots,n-1$. Cela implique que $f$ est continue au voisinage de $f^{l}(S_0(w))$ pour $l=0,\cdots,n-1$. 

On peut donc définir: 

$$\nu=\frac{1}{n}(\nu_0+f_* \nu_0+\cdots+(f^{n-1})_* \nu_0)$$

et on a: 

$$f_*\nu=\frac{1}{n}(f_*\nu_0+(f^2)_* \nu_0+\cdots+ \underbrace{(f^{n})_* \nu_0}_{= \nu_0} )=\nu.$$

La mesure $\nu$ est donc bien invariante par $f$ et portée par $\Lambda$. Enfin, pour $i=1, \cdots , k_0$, on a
 
\begin{equation*}
\begin{split}
I& =\left| \int \varphi_i\mathrm{d}\nu-\int \varphi_i\mathrm{d}\mu \right| =\left| \int \varphi_i\frac{1}{n}\mathrm{d}(\nu_0+\cdots+(f^{n-1})_*\nu_0)-\int \varphi_i\mathrm{d}\mu \right| \\
&=\left| \frac{1}{n}\int \sum_{l=0}^{n-1}\varphi_i(f^l(x))\mathrm{d}\nu_0(x)-\int \varphi_i\mathrm{d}\mu \right| =\left| \frac{1}{n}\int \sum_{l=0}^{n-1}\varphi_i(f^l(S_0(w)))\mathrm{d}\lambda_0(w)-\int \varphi_i\mathrm{d}\mu \right| \\
& =\left| \int \left( \frac{1}{n}\sum_{l=0}^{n-1}\varphi_i(f^l(S_0(w)))-\int \varphi_i\mathrm{d}\mu \right) \mathrm{d}\lambda_0(w) \right|. \\
\end{split}
\end{equation*}

Mais pour $l=0, \cdots , n$, on a $\mathrm{dist}(f^l(S_0(w)),f^l(x_{w_0}))\leq \frac{\epsilon}{4}$. L'hypothèse faite au début implique donc que $| \varphi_i(f^l(S_0(w)))-\varphi_i(f^l(x_{w_0}))|<\frac{r}{2}$ pour $l=0,\cdots,n-1$. 

D'où:

$$I\leq \int \left| \frac{1}{n}\sum_{l=0}^{n-1}\varphi(f^l(x_{w_0}))-\int \varphi_i\mathrm{d}\mu \right| \mathrm{d}\lambda_0(w)+\frac{r}{2}.$$

Enfin par définition de $\Lambda_{\delta,m_0'}$, on a:

$$\left|\frac{1}{n}\sum_{l=0}^{n-1}\varphi(f^l(x_{w_0}))-\int \varphi_i\mathrm{d}\mu \right|<\frac{r}{2}$$

ce qui donne $I\leq r<\rho$ et cela termine la démonstration du théorème.

\newpage

\newpage

\noindent Henry De Thélin, Université Paris 13, Sorbonne Paris Cité, LAGA, CNRS (UMR 7539), F-93430, Villetaneuse, France.  

\noindent Email: {\tt dethelin@math.univ-paris13.fr}

\bigskip

\noindent Franck Nguyen Van Sang, Université Paris 13, Sorbonne Paris Cité, LAGA, CNRS (UMR 7539), F-93430, Villetaneuse, France. 

\noindent Email: {\tt   Franck.Nguyen@ens-rennes.fr}

\end{document}